\newcommand{\cal}[1]{\mathcal{#1}}
\theoremstyle{plain}
\newtheorem*{theo}{Theorem}
\newtheorem{lemma}{Lemma}[section]
\newtheorem{theorem}[lemma]{Theorem}
\newtheorem{proposition}[lemma]{Proposition}
\newtheorem{corollary}[lemma]{Corollary}
\theoremstyle{definition}
\newtheorem{remark}[lemma]{Remark}
\newcommand{\Ric}{\mathrm{Ric}\,}
\newcommand{\omr}{\omega_R}
\newcommand{\womr}{\widehat \omega_R}
\newcommand{\KN}{\mathbin{\bigcirc\mspace{-15mu}\wedge\mspace{3mu}}}
\let\phialt=\phi
\let\phi=\varphi
\let\varphi=\phialt
\begin{document}
\title{K\"ahler--Einstein metrics of negative curvature}
\thanks{AMS subject classification: 32Q20, 53C55, 32Q02}
\date{April 22, 2025}

\author{Henri Guenancia}
\address{Univ. Bordeaux, CNRS, Bordeaux INP, IMB, UMR 5251, F-33400 Talence, France}
\email{henri.guenancia@math.cnrs.fr}
\author{Ursula Hamenstädt}
\address{Mathematisches Institut der Universität Bonn, Endenicher Allee 60, 53115 Bonn, Germany}
\email{ursula@math.uni-bonn.de}
\begin{abstract}
    Given any integer $n\ge 2$, we construct a compact Kähler--Einstein manifold of dimension $n$ of negative 
    sectional curvature which is not covered by the ball. 
\end{abstract}

\maketitle

\section{Introduction}

An important problem in complex geometry consists in finding   compact complex manifolds $M$ admitting a hermitian metric $\omega$ with good curvature properties. Formulated as such, the problem is of course vague and there are many ways to make it more precise.  In what follows, we will be exclusively interested in Kähler metrics, that is, we will impose that $d\omega=0$. 

Given a compact Kähler manifold $(M,\omega)$, there exist several distinct notions of curvature, e.g. the sectional curvature ($K_\omega$), the holomorphic bisectional curvature ($\mathrm{HBC}_\omega$), the holomorphic sectional curvature ($\mathrm{HSC}_\omega$), the Ricci curvature ($\Ric_\omega$) and the scalar curvature ($s_\omega$). Although each of these objects are tensors of different types, it makes sense to talk about (semi)posivity or (semi)negativity of these curvatures. Then we have the following implications 
\[\begin{tikzcd}
    K_\omega <0 &\Longrightarrow &\mathrm{HBC}_\omega<0 \arrow[d, Rightarrow]&\Longrightarrow &\mathrm{HSC}_\omega<0  \arrow[d, Rightarrow]\\
    &&\Ric_\omega <0 & \Longrightarrow &s_\omega <0
\end{tikzcd}\]
\noindent
and similarly with seminegativity or (semi)positivity. 


If $(M,\omega)$ is a compact Kähler manifold with positive bisectional curvature, a celebrated theorem of Siu and Yau \cite{SY} implies that $M$ is biholomorphic to the projective space, cf also Mori's theorem \cite{Mori} in the algebraic setting. In the negative curvature case, that is, if $(M,\omega)$ has negative sectional curvature, it was asked by Yau in \cite{Yau82} whether the universal cover $\widetilde M$ is biholomorphic to the ball $\mathbb B^n$. It turns out that this question has a negative answer. Throughout the years several counterexamples have been exhibited, e.g. in dimension two by Mostow and Siu \cite{MS}, in dimension three by Deraux \cite{De}, and in any dimension by Stover and Toledo \cite{ST22}.

The examples of Mostow-Siu, of Deraux and of Stover-Toledo have infinite fundamental group. They are either finite branched covers of ball quotients 
(the examples of Mostow-Siu and of Stover-Toledo), 
or their universal covers can locally be described as branched covers of the ball (the examples of Deraux). 
That the manifolds found by Stover-Toledo admit K\"ahler metrics with negative
definite complex curvature operator and hence negative curvature in the sense of Siu \cite{Siu}
follows from an earlier result of 
Zheng \cite{Zh96}. As consequence \cite{Siu}, the manifolds are \emph{holomorphically rigid}: 
any compact complex manifold which is homotopy equivalent to one of these manifolds is biholomorphic to it. Minemyer \cite{M25} equipped these manifolds, called Stover-Toledo manifolds in the sequel,
with non-K\"ahler Riemannian 
metrics whose Riemannian curvature operator is non-positive. 

In a different direction, Mohsen \cite{Moh} produced examples of {\it simply connected} Kähler manifolds $(M,\omega)$ such that the holomorphic bisectional curvature is negative. These Kähler manifolds are complete intersections (of large codimension) in the projective space endowed with the restriction of the Fubini-Study metric. 
%


This leaves open the question whether there are
"canonical" Kähler metrics of negative curvature on compact complex manifolds 
which are not locally symmetric. 
More precisely, we ask about the existence of a non-locally symmetric compact Kähler manifold $(M,\omega)$ such that
\[K_\omega <0 \quad \mbox{and} \quad \Ric \omega= c\, \omega \]
where $c\in \mathbb R$ is a (negative) so-called Einstein constant. 

Thanks to a celebrated theorem of Aubin \cite{Au} and Yau \cite{Yau78}, it is known that a compact Kähler manifold $M$ admits a unique normalized Kähler--Einstein metric of negative Ricci curvature, that is, 
a Kähler metric $\omega$ such that $\Ric(\omega)=-\omega$, if and only if  the first Chern class of $M$ is negative, in the sense that there exists a Kähler metric in the class $-c_1(M)$. If this  cohomological condition is satisfied, the unique Kähler--Einstein metric is constructed indirectly by solving a complex Monge-Ampère equation. However, as it is in general impossible to read 
off from the latter
partial differential equation information on the sectional curvature, 
the above question is quite delicate. Our main result is the following. 

\begin{theo}
For every $n\geq 2$ there exists a compact complex manifold $M$ of dimension $n$ 
whose universal covering is not biholomorphic to the ball and
which admits a Kähler--Einstein metric of negative sectional curvature.
\end{theo}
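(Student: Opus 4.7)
The plan is to construct the examples as ramified covers of ball quotients, and to prove that the Aubin--Yau K\"ahler--Einstein metric on such a cover has negative sectional curvature via a continuity method in a cone angle parameter along the branch divisor.

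I would start from a finite cyclic cover $f\colon M \to Y$ of a smooth ball quotient $Y = \mathbb{B}^n/\Gamma$, ramified to order $k$ along a smooth totally geodesic divisor $D \subset Y$, as in the constructions of Mostow--Siu and of Stover--Toledo. These examples guarantee that $M$ is not biholomorphic to a ball quotient and that $K_M$ is ample; Aubin--Yau then provides a unique K\"ahler--Einstein metric $\omega_{KE}$ with $\Ric \omega_{KE} = -\omega_{KE}$, and the remaining task is to prove $K_{\omega_{KE}}<0$.

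The key tool is a family of conical K\"ahler--Einstein metrics $\omega_\beta$, in the sense of Jeffres--Mazzeo--Rubinstein and Campana--Guenancia--P\u{a}un, satisfying $\Ric \omega_\beta = -\omega_\beta + (1-\beta)[E]$, where $E = f^{-1}(D)$, with cone angle $2\pi\beta$ along $E$ for $\beta \in [1/k, 1]$. At $\beta = 1/k$ the metric $\omega_{1/k}$ is, up to normalization, the pullback of the complex hyperbolic metric on $Y$; it has constant holomorphic sectional curvature $-1$ and real sectional curvatures in $[-1, -1/4]$. At $\beta = 1$ one recovers the smooth Aubin--Yau metric $\omega_{KE}$. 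I would then analyze the sectional curvature of $\omega_\beta$ as $\beta$ varies: away from $E$, higher regularity for the conical Monge--Amp\`ere equation yields $C^0$ control on the curvature tensor, so negativity propagates by continuity from a neighborhood of $\beta = 1/k$; near $E$ one would match the metric with the flat cone model in the normal slice to rule out curvature blow-up at the branch locus.

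The heart of the argument is to close the continuity method up to $\beta = 1$, which requires a uniform a priori strictly negative upper bound on the maximal sectional curvature of $\omega_\beta$. Sectional curvature depends on fourth derivatives of the K\"ahler potential, which are not directly controlled by the Monge--Amp\`ere equation, and this is the main obstacle. I would expect the proof to combine Bochner-type identities exploiting the K\"ahler--Einstein condition with maximum-principle arguments applied to a scalar function built out of the curvature tensor, in order to exclude sectional curvatures accumulating at zero as $\beta \to 1$. This closedness statement, combined with the openness furnished by continuity, would then conclude that $K_{\omega_{KE}} < 0$ on all of $M$, establishing the theorem.
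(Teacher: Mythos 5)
Your starting point (branched covers of ball quotients along a totally geodesic divisor, conical K\"ahler--Einstein metrics, ampleness of $K_M$) matches the paper, but the mechanism you propose for negativity of the curvature does not work as stated, and it also contains a setup error. First, the endpoint of your family is misidentified: if $f$ is branched to order $k$ along $E=f^{-1}(D)$, then $f^*\omega_Y$ is a conical metric of angle $2\pi k$ along $E$ (it satisfies $\Ric f^*\omega_Y=-(n+1)f^*\omega_Y-(k-1)[E]$, i.e.\ $\beta=k$), not $2\pi/k$; so either your path must run through cone angles larger than $2\pi$, where the existence and regularity theory you invoke does not apply, or your anchor metric at $\beta=1/k$ is not the constant-curvature metric you claim. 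In fact no deformation upstairs is needed: by uniqueness, the smooth Aubin--Yau metric on $M$ \emph{is} the pullback of the conical K\"ahler--Einstein metric on $(Y,(1-\tfrac1k)D)$ with angle $2\pi/k$, so the whole problem is the sectional curvature of that single downstairs conical metric --- which is exactly where the real difficulty sits.

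Second, and more seriously, the closedness step of your continuity method --- a uniform, strictly negative a priori bound on the sectional curvature of $\omega_\beta$ up to $\beta$ corresponding to the target metric --- is precisely the heart of the problem, and you offer only the hope of ``Bochner-type identities'' and a maximum principle for a scalar built from the curvature. Sectional curvature involves four derivatives of the potential and is not governed by the Monge--Amp\`ere equation; no such maximum-principle scheme is known, and for intermediate cone angles in $(\pi,2\pi)$ the curvature of conical K\"ahler--Einstein metrics is in general unbounded near the divisor, which already breaks the continuity/openness--closedness argument near $E$. Note also that if your argument closed, it would prove negativity of the Aubin--Yau metric on \emph{every} such branched cover, regardless of the geometry of $D\subset Y$ --- a statement far stronger than what is known. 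The paper avoids this by a quantitative replacement for your missing estimate: it computes the curvature of an explicit model (the Cheng--Yau metric on the Th\"ullen domain, reduced to an ODE) and shows it is pinched negative and exponentially asymptotic to the hyperbolic metric; it then uses subgroup separability to pass to covers in which $D$ has arbitrarily large collar size $R$, glues the model to the hyperbolic metric with error $O(e^{-R/8})$, and perturbs to the genuine conical K\"ahler--Einstein metric with a correction that is small in $C^5$ uniformly as $R\to\infty$, so negativity is inherited and then desingularized on the branched cover. Without an input of this kind (or a genuinely new a priori curvature estimate), your proposal has a gap exactly at the step you flag as the main obstacle.
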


Actually one can obtain the following refined statement. For an a priori chosen constant 
$\epsilon >0$ and any number $n\geq 2$, 
there exists a compact Kähler--Einstein manifold $(M_\varepsilon, g_\varepsilon)$ of dimension $n$ 
and Einstein constant $-1$ 
such that the sectional curvature $\kappa$ of $g_\varepsilon$ satisfies
\[\min \kappa \in [-1,-1+\epsilon] \quad \mbox{ and} \quad  
\max \kappa\in [-\epsilon,0).\] 
In particular, we can find in any given complex dimension $n$ an infinite countable family of Kähler--Einstein manifolds $(M_k,g_k)_{k\in \mathbb N}$ of negative curvature whose universal covers $\widetilde M_{k}$ are mutually non biholomorphic.
All of these examples are Stover-Toledo manifolds. Furthermore, 
the Kähler--Einstein metrics in the theorem have very strongly negative curvature tensor 
in the sense of Siu. We refer to the last paragraph of the article for more information.

An old conjecture (see p.322 of \cite{MS} for an explicit statement) predicts that a simply connected
complete K\"ahler manifold of negative sectional curvature is biholomorphic to a bounded domain in 
$\mathbb{C}^n$. This is open even if one requires the metric to be K\"ahler-Einstein. We believe that our examples
hint at the possibility that this conjecture is not true. 

\noindent
{\bf Relation to earlier work.}
The question on the existence of negatively curved Einstein metrics on closed manifolds
which do not admit a locally symmetric metric also makes sense in the non-complex setting. The first examples of such 
metrics are due to Fine and Premoselli \cite{FP20}. They considered 
suitably chosen branched covers of some real hyperbolic four-manifolds (which 
in contrast to the complex setting are fairly easy to construct) and 
were able to show that an explicit negatively curved approximate Einstein metric on the branched cover can be perturbed to 
a negatively curved Einstein metric. 
This construction was extended in \cite{HJ24} to any dimension at least four. 
In the real setting, it is the \emph{existence} of some Einstein metric on a given closed 
negatively curved manifold of (real) dimension at least four which is difficult to establish. 
In the complex world, the 
existence of a K\"ahler Einstein metric can be read off from complex analytic invariants 
of the manifold, and the interest lies in the relation between 
geometric properties of a K\"ahler (Einstein) metric and complex  analytic properties of the manifold. 

\noindent
{\bf Strategy of proof.}
Let $M:=\Gamma\backslash B$ be a compact quotient of the unit ball $B\subset \mathbb C^n$ by a torsion free uniform arithmetic lattice of simple type 
admitting a totally geodesic embedded smooth complex hypersurface $D\subset M$. Such lattices $\Gamma\subset \mathrm{PU}(n,1)$ 
are the starting point for the work of Stover and Toledo (see \cite{ST22}). 
We fix an integer $d\ge 2$. 

\emph{Step 1. Produce an orbifold model Kähler--Einstein metric $\omega_d$ near $D$.}

\noindent
Let $B_0\subset B$ be the totally geodesic complex hypersurface $B_0:=\{z_1=0\}\cap B$. Thanks to 
the theorem of Cheng-Yau \cite{CY80}, there exists on $B$ a unique complete Kähler--Einstein metric $\omega_d$ which has cone singularities with cone angle $2\pi(1-\frac 1d)$ along $B_0$. In other words, $\omega_d$ can be desingularized by taking the ramified cover $(z_1, \underline z)\to (z_1^d, \underline z)$ defined on the weakly pseudoconvex so-called 
\emph{Th\"ullen domain}
$\Omega_d:=\{|z_1|^{2d}+|\underline z|^2<1\}\subset \mathbb C^n$. The metric $\omega_d$ is invariant under the automorphisms of $B$ preserving $B_0$ and hence it descends to $\Gamma_0 \backslash B$ where $\Gamma_0<\Gamma$ is the stabilizer of $B_0$ inside $\Gamma$, and we have $\Gamma_0\backslash B_0=D$.
The desingularization of the metric $\omega_d$ on $\Gamma_0\backslash B$ serves as a model for the 
K\"ahler Einstein metric near the divisor $D\subset M$ along which a branched covering is taken.

\emph{Step 2. Computing the curvature of $\omega_d$.}

\noindent
A large part of the article is devoted to analyzing the model orbifold metric $\omega_d$ on the ball $B$, or rather its desingularization on the Th\"ullen domain $\Omega_d$. Such an investigation was carried out by Bland \cite{Bl86},
but his results are not strong enough for our needs. Our approach is completely different and based on the 
observation  that the behavior of $\omega_d$ is fully determined by 
a well-chosen real valued function solving a second order 
ordinary differential equation, cf Theorem~\ref{thm:ode}. This leads to 
explicit negative bounds for the sectional curvature of $\omega_d$ described in
Theorem~\ref{negativeonreal} and exponential convergence of $\omega_d$ 
to the complex hyperbolic metric $\omega_B$ as the distance to $B_0$ goes to $+\infty$, which is  
formulated in Theorem~\ref{thm:comparison}.

\emph{Step 3. Gluing $\omega_d$ to the hyperbolic metric.}

\noindent
One would like to glue $\omega_d$ on a tubular neighborhood $U$ of $D\subset M$ to the complex hyperbolic metric $\omega_B$ on $M\setminus U$. 
This is of course always possible, but unless the two metrics match very well in the gluing zone, the resulting metric will no longer have good curvature properties there. Controlling the glued metric requires a large collar size 
of the divisor in the arithmetic manifold as this will guarantee that the gluing metric is close to the ball metric 
on the gluing zone. That one can find Stover-Toledo manifolds obtained by a covering branched 
along a {\it single connected} divisor with arbitrarily large collar size is 
shown in Section \ref{sec ST}, see Theorem~\ref{mainapp}. It involves among other things of subgroup separability of stabilizers of 
hyperplanes in arithmetic lattices in ${\rm PU}(n,1)$ of simple type.

\emph{Step 4. Deforming to the Kähler--Einstein metric.}

\noindent
As the collar size $R$ of the neighborhood of the divisor $D$ tends to infinity, the glued metric will be 
arbitrarily close to a K\"ahler Einstein metric. All of them have uniformly bounded geometry. Using standard tools
we find that they can be deformed to K\"ahler Einstein orbifold metrics with controlled negative 
curvature provided that 
$R$ is sufficiently large. The desingularization of these K\"ahler Einstein orbifolds in covers 
branched along the singular divisors of the metrics provide the 
examples in the main Theorem.
%

\smallskip
\noindent 
{\bf Acknowledgement:} 
We are grateful to Pierre Py who pointed out an error in the first version of this manuscript.

\smallskip
\noindent 
{\bf Funding:} This material is based upon work supported by the National Science Foundation 
under Grant No. DMS-1928930 while the authors were in residence at the Simons Laufer Mathematical 
Science Institute (former MSRI) in Berkeley, California, during the Fall 2024 semester. H.G. is partially supported by the French Agence Nationale de la Recherche (ANR) under reference ANR-21-CE40-0010 (KARMAPOLIS). U.H. is partially supported by the 
DFG Schwerpunktprogramm SPP 2026 Geometry at infinity and the Hausdorff Center Bonn.

\section{Kähler--Einstein metrics on Th\"ullen domains}\label{bounded}

For $n\geq 2$ consider $\mathbb{C}^n$ with the standard coordinates
$(z_1,\dots,z_n)$ and euclidean norm $\vert \,\vert$. 
The unit ball $B$ in $\mathbb{C}^n$ is be defined by 
\[B=\{(z_1,z_2,\dots,z_n)\in \mathbb{C}^n\mid \vert z_1\vert^2+\sum_{i\geq 2}\vert z_i\vert^2<1\}.\]
The group of biholomorphic automorphisms of $B$ is the group ${\rm PU}(n,1)$. The stabilizer 
of the divisor $B_0=\{z_1=0\}$ equals
\[{\rm Stab}_{{\rm PU}(n,1)}(B_0)=P(S^1\times U(n-1,1))={\rm U}(n-1,1).\] The circle group $S^1$ acts
on $B$ by $(e^{i\theta},(z_1,\dots,z_n))\to (e^{i\theta}z_1,\dots,z_n)$, and it is the subgroup of
${\rm Stab}_{{\rm PU}(n,1)}(B_0)$ which fixes $B_0$ pointwise.
More concretely, ${\rm Stab}_{{\rm PU}(n,1)}(B_0)$ is a central extension of ${\rm PU}(n-1,1)$, the group of 
biholomorphic automorphisms of $B_0$, by the circle group $S^1$. 

For $\alpha \in [1,\infty)$ consider the \emph{Th\"ullen domain}
\[\Omega=\Omega_\alpha=\{(z_1,\dots,z_n)\in \mathbb{C}^n\mid 
\vert z_1\vert^{2\alpha} +\sum_{i\geq 2}\vert z_i\vert^2<1\}.\] 
Clearly we have $\Omega_\alpha =B$ for $\alpha =1$, and 
$\Omega_\infty=D\times B_0$, the product of the unit disk $D$ and the ball of dimension $n-1$.
For $\alpha<\infty$ 
the bounded domain $\Omega_\alpha\subset \mathbb{C}^n$ 
is weakly $C^2$-pseudoconvex. Moreover, for $\alpha=d\in \mathbb{N}$, the domain  $\Omega_\alpha$ maps onto the ball $B\subset \mathbb{C}^n$ by the holomorphic map 
\[\Phi_d:(z_1,z_2,\dots,z_n)\to (z_1^d,z_2, \dots,z_n).\]
The map $\Phi_d$ is a covering of degree $d$, 
branched along $B_0$. For arbitrary $\alpha\geq 1$ we can also formally write a 
map $\Phi_\alpha:\Omega_\alpha\to B$, however it is multi-valued. 

The following is due to Naruki \cite{Na68}. It relies on the fact that the 
coordinate projection 
$(z_1,\dots,z_n)\to (z_2,\dots,z_n)$ is a holomorphic fibration with fiber the disk. 

\begin{lemma}[Naruki]\label{automorphismgroup}
There is a central extension $G$ of ${\rm PU}(n-1,1)$ by $S^1$ which 
acts on 
$\Omega_\alpha$ as a group of biholomorphic automorphisms, and complex conjugation
$z\to \bar z$ acts as an antiholomorphic automorphism.  Moreover, one has an isomorphism $G\simeq \mathrm{Aut}(\Omega_\alpha).$
\end{lemma}

Although the statement of the lemma is well known, we provide a sketch of a proof to illustrate
the nature of the action of $G$ on $\Omega_\alpha$ as this will be important in the 
sequel and is not well documented in the literature. The multi-valued map $\Phi_\alpha$ induces
a homomorphism $G\to {\rm U}(n-1,1)$.  

\begin{proof}[Proof of Lemma \ref{automorphismgroup}]  
By the definition of  $\Omega_\alpha$, the circle group $S^1$ of rotations
in the $z_1$-coordinate, defined by 
\[(\theta,(z_1,z_2,\dots,z_n))\to 
(e^{i\theta}z_1,z_2,\dots,z_n),\] 
acts on $\Omega_\alpha$ as a group of biholomorphic automorphisms. The map $\Phi_\alpha$
maps orbits of $S^1$ to orbits of $S^1$, but it does not commute with the $S^1$-action. More precisely,
we have $\Phi_\alpha \circ \theta= \alpha \theta \circ \Phi_\alpha$.

Consider the ball $B_0=\{z_1=0\}\subset B\cap \Omega_\alpha$. 
If $\psi_0$ is an automorphism of $B_0$, 
then there is a nowhere vanishing holomorphic function $\theta$ on $B_0$ so that 
$\psi_0$ extends to an automorphism $\psi$ of $B$ defined by 
$\psi(z_1,z_2,\dots,z_n)=(\theta(z_2,\dots,z_n)z_1,\psi_0(z_2,\dots,z_n))$.
We know that
$\vert \theta\vert^2(\underline z)=\frac{1-\vert \psi_0 (\underline z)\vert^2}{1-\vert \underline z\vert^2}$.

Any choice $\hat \theta$ of a root of $\theta$, that is, a holomorphic 
function $\hat \theta$ with $\hat \theta^{\alpha}=\theta$, defines an automorphism $\hat \psi$
of $\Omega_\alpha$ by
\[\hat \psi(z_1,\dots,z_n)=(\hat \theta(z_2,\dots,z_n)z_1,\psi_0(z_2,\dots,z_n))\]
 so that 
$\Phi_\alpha \circ \hat \psi=\psi\circ \Phi_\alpha$.
If $\tilde \theta$ is another root, then 
$\tilde \theta =e^{ia} \theta$ where $(e^{ia})^\alpha=1$. In other words, any two such 
choices differ by an element of $S^1$. As a consequence, the stabilizer $G$ of 
$B_0$ in the automorphism group of $\Omega_\alpha$ surjects onto the automorphism
group ${\rm PU}(n-1,1)$ of $B_0$, with kernel $S^1$, and this surjection 
induces a homomorphism $G\to {\rm U}(n-1,1)=\mathrm{Stab}_{\mathrm{PU}(n,1)}(B_0)$. 

At this point we have proved that for any $g\in G$, there exists $h_g\in \mathrm{Stab}_{\mathrm{PU}(n,1)}(B_0)$ such that 
\begin{equation}
\label{equivariance}
\Phi_\alpha \circ g = h_g \circ \Phi_\alpha.
\end{equation}
%
%
%

That complex conjugation is an antiholomorphic automorphism of $\Omega_\alpha$ is immediate from the 
definition.

In order to prove that $G$ is isomorphic to the automorphism group of $\Omega_\alpha$ we need to show that any element $g\in \mathrm{Aut}(\Omega_\alpha)$  fixes $B_0$. This can be seen as follows. Up to composing with the lift to $G$ of a suitable element in $\mathrm{PU}(n-1, 1)$, one can assume that $g$ fixes the origin. By Cartan's theorem, $g$ has to act on $\Omega_\alpha$ by a linear transformation of $\mathbb C^n$. Now $g$ fixes $\partial \Omega_\alpha$, and it also fixes the non strictly pseudoconvex locus of the latter, which is exactly $B_0\cap \partial \Omega_\alpha \simeq \{\underline z\in \mathbb C^{n-1}; |\underline z|=1\}$ since $\alpha>1$. If $g$ is represented by the matrix $(a_{ij})$ then we must have $\sum_{j=2}^n a_{1j} z_j=0$ for any $\underline z=(z_2, \ldots, z_n) \in S^{2n-3}$ hence for any $\underline z \in \mathbb C^{n-1}$. That is, we have $g(B_0)\subset B_0$, and equality follows. 
\end{proof}

Since $\Omega_\alpha$ is weakly $C^2$ pseudoconvex, it follows from the work of Cheng and Yau \cite{CY80} that $\Omega_\alpha$ admits a unique complete Kähler--Einstein metric.

\begin{theorem}[Theorem 7.5 of \cite{CY80}]\label{chengyau}
There exists a unique complete Kähler--Einstein metric $g_\alpha$ on $\Omega_\alpha$ with Einstein constant $-(2n+2)$. In particular, $g_\alpha$ is 
invariant under the group $G$ of biholomorphic transformations and under complex conjugation. 
\end{theorem}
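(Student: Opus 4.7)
The bulk of the statement is a direct citation of Cheng--Yau \cite{CY80}, so the plan is to verify that $\Omega_\alpha$ satisfies the hypotheses of that theorem and then to deduce the invariance properties from the uniqueness part. I would organize this as follows.

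First, I would check applicability of Cheng--Yau. The domain $\Omega_\alpha$ is a bounded domain in $\mathbb{C}^n$ defined by the strictly plurisubharmonic exhaustion function $\rho(z)=|z_1|^{2\alpha}+\sum_{i\ge 2}|z_i|^2-1$, and its boundary is of class $C^2$ and weakly pseudoconvex, as already noted in the discussion preceding the statement. These are exactly the regularity and convexity hypotheses needed for Theorem~7.5 of \cite{CY80} to produce a unique complete Kähler--Einstein metric $g_\alpha$ of negative Ricci curvature on $\Omega_\alpha$. Normalizing the Einstein constant to $-(2n+2)$ is simply a matter of rescaling the metric by a positive constant, which is again unique.

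Second, I would establish invariance under the ${\rm U}(n-1,1)$-action using uniqueness. Let $\phi \in {\rm U}(n-1,1)$, acting as a biholomorphism of $\Omega_\alpha$ by Lemma~\ref{automorphismgroup}. Then $\phi^*g_\alpha$ is again a complete Kähler--Einstein metric on $\Omega_\alpha$ with Einstein constant $-(2n+2)$: completeness is preserved because $\phi$ is a diffeomorphism of $\Omega_\alpha$ onto itself, the Kähler condition is preserved because $\phi$ is holomorphic, and the Einstein condition and normalization are invariant under pullback by biholomorphisms. By the uniqueness clause of Theorem~7.5 of \cite{CY80}, we must have $\phi^*g_\alpha = g_\alpha$.

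Third, for complex conjugation $\sigma(z)=\bar z$ I would make the same argument, with one extra observation: the pullback of a Kähler form $\omega$ by an antiholomorphic diffeomorphism is $-\sigma^*\omega$ in the sense of $(1,1)$-forms, but the associated Riemannian metric $\sigma^*g_\alpha$ is still a Kähler metric (with respect to the same complex structure on $\Omega_\alpha$) and the Einstein equation $\Ric = -(2n+2)\, g$ is preserved, since curvature is a Riemannian invariant. Completeness is again preserved because $\sigma$ is a diffeomorphism of $\Omega_\alpha$. Uniqueness then forces $\sigma^* g_\alpha = g_\alpha$.

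\textbf{Main obstacle.} There is essentially no hard analytic work to do here: all the PDE content is packaged in Cheng--Yau's existence and uniqueness theorem. The only mild subtlety is that $\partial\Omega_\alpha$ is merely weakly pseudoconvex (and only $C^2$, not smooth, when $\alpha$ is not an integer), so one has to be careful to invoke the version of \cite{CY80} that allows this degree of regularity rather than the strictly pseudoconvex case; this is exactly the content of their Theorem~7.5. Beyond that, the invariance statements are formal consequences of uniqueness, with the only point worth flagging being that antiholomorphic pullback preserves the Kähler--Einstein condition on the underlying Riemannian metric.
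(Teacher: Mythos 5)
Your overall structure matches the paper's proof: existence is quoted from Cheng--Yau, invariance under ${\rm U}(n-1,1)$ is a formal consequence of uniqueness, and invariance under complex conjugation uses that conjugation anticommutes with $J$, so that the pulled-back metric is again Kähler (with fundamental form $-\sigma^*\omega_\alpha$) and Einstein, hence equal to $g_\alpha$ by uniqueness. That part of your argument is essentially the paper's.

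The gap is in the uniqueness itself. You invoke ``the uniqueness clause of Theorem~7.5 of \cite{CY80}'', but, as the paper points out, that theorem gives existence of a complete Kähler--Einstein metric on the weakly $C^2$-pseudoconvex domain $\Omega_\alpha$ and does \emph{not} state uniqueness explicitly; since uniqueness is exactly what powers both of your invariance arguments, it cannot simply be absorbed into the citation. The paper fills this in with the standard argument: if $\omega$ and $\omega'$ are two complete Kähler--Einstein metrics with the same Einstein constant $c<0$, then Yau's Schwarz lemma (Theorem~3 of \cite{Y78}) shows that $F=\log\big(\frac{\omega'^n}{\omega^n}\big)$ is globally bounded; since $dd^cF=-c(\omega'-\omega)$, Yau's generalized maximum principle \cite{Y75} applied to $\pm F$ forces $F\equiv 0$, hence $\omega'=\omega$. (Note that a naive maximum principle is unavailable because $\Omega_\alpha$ is noncompact and complete; this is why both the Schwarz lemma bound and the generalized maximum principle are needed.) With this uniqueness argument supplied, the rest of your proposal goes through as written.
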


\begin{proof}
Since $\Omega_\alpha$ is weakly $C^2$ pseudoconvex, the existence of \emph{some} complete invariant Kähler--Einstein metric $\omega_\alpha$ on $\Omega_\alpha$ is Theorem 7.5 of \cite{CY80}, which however does not state uniqueness explicitly. Uniqueness is a classic consequence of Yau's Schwarz lemma and his generalized maximum principle. Indeed, Theorem~3 in \cite{Y78} shows that if $\omega$ and $\omega'$ are two complete Kähler--Einstein metrics with the 
same Einstein constant $c<0$, then the ratio $F:=\log\Big(\frac{\omega'^n}{\omega^n}\Big)$ is globally bounded. Finally, since $dd^c F=-c(\omega'-\omega)$, applying the maximum principle \cite{Y75} to $\pm F$ yields $F\equiv 0$, hence $\omega'=\omega$.  

The invariance of the associated Riemannian metric $g_\alpha$ under the group of holomorphic automorphisms $G$ is a direct consequence of the 
invariance of $\omega_\alpha$ and the fact that the Riemannian metric can be recovered from the Kähler form. 
Now, if $\varphi$ is the diffeomorphism of $\Omega_\alpha$ induced by complex conjugation and $J$ is the complex structure, we have $\varphi J=-J\varphi$. This implies that $J$ preserves $\varphi^*g_\alpha$ and that $\varphi^*J=-J$. In particular, we have $\nabla^{\varphi^*g_\alpha}J=0$ so that the positive real  $(1,1)$-form associated to $(\varphi^*g_\alpha,J)$ (which is nothing but $-\varphi^*\omega_\alpha$) is closed; thus it is Kähler--Einstein. By uniqueness, it must coincide with $\omega_\alpha$. This implies that $\varphi^*g_\alpha=g_\alpha$. 
\end{proof}

\begin{remark}[Comparison with the Bergman metric]
    \label{Bergman} 
    The bounded domain $\Omega_\alpha$ can be equipped with the \emph{Bergman metric} $h_\alpha$. It was proved in Theorem~3 of \cite{AS83}
that the holomorphic sectional curvature of the Bergman metric $h_\alpha$ is contained 
in an interval $[-b^2,-a^2]$ for some $0<a<b<\infty$ not depending on $\alpha$. In particular, it follows from Theorem 4.4 of \cite{CY80}
that $g_\alpha$ is bi-Lipschitz equivalent to $h_\alpha$. 
\end{remark}

The invariant Kähler--Einstein metric $g_\alpha$ on 
$\Omega_\alpha$ with Einstein constant $-(2n+2)$ whose existence
was pointed out in Theorem \ref{chengyau}
was studied by Bland \cite{Bl86} who proved that its 
sectional curvature is negative. 
The goal of this section is to improve Bland's result and establish the following 
explicit description of $g_\alpha$. 

\begin{theorem}\label{negke}
The complete $G$-invariant Kähler--Einstein metric $g_\alpha$
on $\Omega_\alpha$ has the
following properties.
\begin{enumerate}
\item The divisor $B_0$ is totally geodesic.
\item The sectional curvature of $g_\alpha$ is contained in an interval of the form
$[-2n-2,-a_\alpha^2]$ for some $0<a_\alpha \leq 1$. 
\item The holomorphic sectional curvature ranges in $[-2n-2,-4]$.
\item For $d\in \mathbb{N}$, it holds $(\Phi_d^*g_1- g_\alpha)(z)\to 0$, 
exponentially with the distance 
of $\Phi_d(z)$ from $B_0$.
\end{enumerate}
\end{theorem}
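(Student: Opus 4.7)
The plan is to exploit the cohomogeneity-one action of $U(n-1,1)$ on $\Omega_\alpha$ to reduce the Kähler--Einstein condition to a second-order ODE in a single invariant variable, and then read off all four conclusions from the analysis of that ODE. Indeed $\mathrm{PU}(n-1,1)$ acts transitively on $B_0$ while the circle factor $S^1$ fixes $B_0$ pointwise, so the orbit space of $U(n-1,1)$ on $\Omega_\alpha$ is one-dimensional; a $U(n-1,1)$-invariant Kähler potential depends only on a single invariant such as $\rho = |z_1|^{2\alpha}/(1-|\underline z|^2)$. Plugging $u = u(\rho)$ into the Monge--Ampère equation $\det(u_{i\bar j}) = \mathrm{const}\cdot e^{(2n+2)u}$ yields the second-order ODE featured in Theorem~\ref{thm:ode}, and all subsequent work reduces to manipulations of this ODE.

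Part (1) is immediate from symmetry: $B_0$ is the fixed-point set of the $S^1 \subset U(n-1,1)$ acting by $(z_1, \underline z) \mapsto (e^{i\theta}z_1, \underline z)$, and this $S^1$ acts by $g_\alpha$-isometries, so its fixed-point set is automatically totally geodesic. For Part (3), $\mathrm{PU}(n-1,1)$-invariance together with (1) forces $g_\alpha|_{B_0}$ to be a multiple of the complex hyperbolic metric on $B_0$; from the ODE one checks that the HSC along $B_0$ equals $-4$ and that HSC decreases monotonically as one moves transversally away from $B_0$. The universal lower bound $-(2n+2)$ on sectional (and hence holomorphic sectional) curvature then follows from the Einstein condition by the standard Ricci-trace argument: for any orthonormal pair $\{X,Y\}$, writing $\Ric(X,X)=-(2n+2)$ as a sum of $2n-1$ sectional curvatures and invoking non-positivity of the remaining ones (known from Bland, or redone directly from the ODE) forces $K(X,Y)\geq -(2n+2)$.

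Part (2)'s strict upper bound $-a_\alpha^2 < 0$ and Part (4) I would treat together. For (4), both $g_\alpha$ (with $\alpha=d$) and the pullback $\Phi_d^* g_1$ are $U(n-1,1)$-invariant Kähler--Einstein metrics on $\Omega_d \setminus B_0$, so under the ODE reduction they correspond to two solutions of the same ODE differing only in their behavior at $B_0$: $g_d$ is smooth across $B_0$, while $\Phi_d^* g_1$ has a conical degeneracy there. Both solutions share the same leading asymptotic regime as the distance to $B_0$ grows, modelling the same complex hyperbolic end at infinity; asymptotic ODE analysis (linearize at infinity and diagonalize) then yields exponential decay of their difference at a definite rate, which is (4). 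Once (4) is in hand, all curvatures of $g_\alpha$ converge uniformly to those of the ball metric (which lie in $[-4,-1]$) as the distance to $B_0$ tends to infinity, while on any compact region the explicit ODE formulas give strict negativity directly; combining these produces the uniform upper bound $-a_\alpha^2 < 0$ in (2).

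The main technical obstacle, as I see it, is extracting \emph{quantitative} curvature bounds from the ODE rather than only qualitative negativity. Bland already established negativity; pinning down the strict upper bound $-a_\alpha^2$ over every tangent two-plane and the sharp lower bound $-(2n+2)$ requires passing from scalar information along the cohomogeneity-one radial direction to all two-plane curvatures, and this is where the computations have to be carried out with care.
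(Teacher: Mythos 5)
Your overall strategy — reduce to a cohomogeneity-one ODE and read off all four parts — is the same skeleton as the paper's, but there are two genuine problems and one notable difference worth pointing out.

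First, your argument for Part (3) contains a computational error that breaks the proof of the upper bound $-4$. You assert that the holomorphic sectional curvature \emph{equals} $-4$ on $B_0$ and \emph{decreases} as one moves transversally away. Both claims are wrong. In the paper's ODE normalization (Theorem~\ref{thm:ode}), the HSC of a $J$-invariant two-plane tangent to $B_0$ is $-4/f_\alpha(0)^2$, and since $f_\alpha(0)\in(\sqrt{n/(n+1)},1]$ this is $\leq -4$ with equality only when $\alpha=1$; moreover the multiplicative factor $\frac{1}{f^2}((f')^2+1)=-\frac{1}{n}\frac{f''}{f}+\frac{n+1}{n}$ is \emph{decreasing} in $t$, so the HSC of these planes is \emph{increasing} towards $-4$ as $t\to\infty$. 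Similarly the HSC of the holomorphic disk $D$ starts below $-4$ on $B_0$ and increases to $-4$. So the upper bound $-4$ is a limiting value at infinity, not the value on $B_0$, and establishing it requires exactly the monotonicity of $f''/f$ (Claim~4 in the paper's proof of Theorem~\ref{negativeonreal}); your proposed argument would actually give only a bound strictly below $-4$ on $B_0$ and would not yield $-4$ as the supremum.

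Second, for Part (2) you correctly flag ``passing from scalar information along the cohomogeneity-one radial direction to all two-plane curvatures'' as the key difficulty, but you do not supply the idea that resolves it. The ODE gives the curvature of three explicit families of planes: the holomorphic disk $D$, the standard totally real plane, and $J$-invariant planes in $\mathcal D$. To control arbitrary two-planes you need the paper's $U(n-1)$-representation-theoretic decomposition of $\wedge^2 T_z\Omega_\alpha=A_1\oplus A_2\oplus A_3$ (Lemma~\ref{representation}) showing that $R$ is block-diagonal with $A_1,A_2$ eigenspaces, which is what makes Corollary~\ref{negative} (``negativity on three test planes implies negativity everywhere'') true. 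Without this reduction the argument is incomplete, and ``on any compact region the explicit ODE formulas give strict negativity directly'' is an unproven assertion about a genuinely $(2n\times 2n)$-dimensional object.

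Two points in your favor. Your Ricci-trace argument for the lower bound $-(2n+2)$ — drop the non-positive off-diagonal terms in $\mathrm{Ric}(X,X)=\sum_i K(X,e_i)$ — is valid once non-positivity of sectional curvature is known and is cleaner than the paper's case-by-case bound; it is an acceptable alternative. And your proposed ODE-asymptotic route to Part (4) (compare the two invariant solutions as $t\to\infty$) is genuinely different from the paper's main proof, which uses Schwarz-lemma/maximum-principle comparisons on the totally geodesic disk $D$ (Claims~1--3 of Theorem~\ref{thm:comparison}); the paper mentions the ODE route only as a sketch in a Remark. Your version would give the $C^0$ statement fairly directly, but upgrading it to the $C^k$ bounds needed downstream (the paper does this via Evans--Krylov and Schauder on unit balls) would still require additional elliptic-estimate input, not just linearization of the radial ODE.
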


The last property of the theorem will be made more precise during the course of the proof. 

Bland does not
establish the asymptotic behavior of the metric transverse to the divisor (part (4) of
the above theorem), which is a crucial ingredient in the proof of our main result. 
This property as well as the explicit description of the curvature does not seem
obvious from his formulas. 

The remainder of this section is devoted to the proof of Theorem \ref{negke}. Our argument
is different from Bland's approach. Its main idea is to reduce the study of the 
metric to an ordinary differential equation which can be solved fairly explicitly. 
The proof is spread over four  subsections. In the first subsection we collect 
some properties of arbitrary invariant K\"ahler metrics on $\Omega_\alpha$, and we use this
in the second subsection to obtain some first information on the curvature tensor of 
such metrics. These results equally hold true for the Bergman metric. 
In the third subsection we turn to the Kähler--Einstein metric and set up an ordinary differential equation 
whose solutions describe the metric fairly explicitly as described in the theorem.
The curvature computation is contained in the forth subsection.

 \subsection{Geometric properties of $G$-invariant K\"ahler metrics on $\Omega_\alpha$}
 
In this subsection we consider an arbitrary complete K\"ahler metric $g$ on $\Omega_\alpha$ which 
is invariant under the group $G$ and under complex conjugation. Examples
we have in mind are the Bergman metric of $\Omega_\alpha$ and the invariant Kähler--Einstein metric $g_\alpha$
whose existence was shown in Theorem \ref{chengyau}.
We establish some general geometric properties with the goal to reduce curvature computations 
to the computation of the curvature of some specific planes in the tangent bundle of $\Omega_\alpha$.

A \emph{standard totally real} plane in $\Omega_\alpha$ is 
the intersection of $\Omega_\alpha$ with $\{z\in \Omega_\alpha\mid 
z_i=0 \text{ for }i\geq 3 \text{ and } z-\bar z=0\}$. A \emph{totally real plane} in $\Omega_\alpha$ is 
the image of the standard totally real plane under an element of the group $G$.
We have

\begin{lemma}\label{total}
\begin{enumerate}
\item
The isometry group of $g$ is of 
cohomogeneity one. 
\item 
The disk $D=\{z_i=0 \text{ for }i\geq 2\}$ and the standard 
totally real plane are totally geodesic.
\item  
The ball $B_0=\{z_1=0\}$ is
totally geodesic, and the restriction of $g$ to $B_0$ is up to a constant factor the complex hyperbolic 
metric.
\end{enumerate}
\end{lemma}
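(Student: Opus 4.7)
The plan is to read off everything from the symmetry group of $g$, which contains $U(n-1,1)$ together with the anti-holomorphic involution given by complex conjugation. All three statements will follow from two general principles: (a) the fixed point set of a subgroup of holomorphic or anti-holomorphic isometries is totally geodesic, and (b) a homogeneous Kähler metric is determined up to scale once the isotropy representation is real-irreducible.

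For part (1), I would identify the $U(n-1,1)$-action on $\Omega_\alpha$ with the standard action on the ball via the partial biholomorphism $\Phi_\alpha$ from the proof of Lemma~\ref{automorphismgroup}. On $B$ the subgroup $U(n-1,1)={\rm Stab}_{{\rm PU}(n,1)}(B_0)$ preserves the tube distance to the totally geodesic hyperplane $B_0$, and it acts transitively on each distance tube (the transitive action of $PU(n-1,1)$ on $B_0$ combined with the $S^1$-rotation transverse to $B_0$ sweep out an orbit of real codimension one). The corresponding $U(n-1,1)$-orbits on $\Omega_\alpha$ are therefore also of real codimension one away from $B_0$, with $B_0$ itself the unique singular orbit. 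Since the isometry group of $g$ contains $U(n-1,1)$, cohomogeneity one follows.

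For the totally geodesic statements in (2) and (3), I apply the fixed-point principle. The group $S^1\subset U(n-1,1)$ rotating the $z_1$-coordinate fixes pointwise exactly $B_0$, so $B_0$ is totally geodesic. The subgroup $U(n-1)\subset U(n-1,1)$ acting unitarily on $(z_2,\dots,z_n)$ fixes pointwise exactly the disk $D=\{z_i=0,\ i\geq 2\}\cap \Omega_\alpha$, whence $D$ is totally geodesic. For the standard totally real plane, I consider the group generated by $U(n-2)$ acting on $(z_3,\dots,z_n)$ and by complex conjugation $\sigma$; since $\sigma$ is an anti-holomorphic isometry by Theorem~\ref{chengyau}'s invariance statement (in the Kähler–Einstein case; for a general $U(n-1,1)$-invariant Kähler metric assumed $\sigma$-invariant the same holds by hypothesis), the common fixed set of this group of isometries is precisely $\{z_1,z_2\in\mathbb{R},\ z_i=0\text{ for }i\geq 3\}\cap\Omega_\alpha$, so it is totally geodesic.

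Finally, to identify $g|_{B_0}$ with a constant multiple of the complex hyperbolic metric, I note that the restriction is a complete $PU(n-1,1)$-invariant Kähler metric on $B_0$. At the origin $0\in B_0$, the isotropy group is $U(n-1)$, acting irreducibly over $\mathbb{R}$ on $T_0B_0\cong \mathbb{R}^{2(n-1)}$; by Schur's lemma any two $U(n-1)$-invariant inner products on this tangent space are proportional, and transitivity of $PU(n-1,1)$ on $B_0$ then forces $g|_{B_0}$ and the complex hyperbolic metric to be globally proportional. The only genuine obstacle is the cohomogeneity-one claim in (1); once one has set up the identification via $\Phi_\alpha$ and recalled the tube structure for the action of ${\rm Stab}_{{\rm PU}(n,1)}(B_0)$ on complex hyperbolic space, the rest reduces to formal applications of the fixed-point principle and Schur's lemma.
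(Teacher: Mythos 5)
Your proposal is correct and follows essentially the same route as the paper: cohomogeneity one from the codimension-one $U(n-1,1)$-orbits, the totally geodesic statements from fixed-point sets of subgroups of (anti-)holomorphic isometries, and the identification of $g|_{B_0}$ up to scale from invariance plus transitivity (the paper phrases this via transitivity of ${\rm PU}(n-1,1)$ on the unit tangent bundle rather than Schur's lemma at a point). The only cosmetic difference is that the paper exhibits $D$, $B_0$ and the totally real plane as fixed sets of specific involutions in ${\rm U}(n-1,1)$ (together with complex conjugation), whereas you use the full subgroups $S^1$, $U(n-1)$, $U(n-2)$; both arguments are equivalent.
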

\begin{proof}
As the metric $g$ is invariant under the group $G$ and the generic orbit of this
group on the ball $B$ and hence on $\Omega_\alpha$ by equivariance is of real codimension one, 
the action of the isometry group of $g$ is of cohomogeneity one showing (1) of the lemma.

Since the disk $D$ is the fixed point set of the holomorphic involution 
\[(z_1,z_2,\dots,z_n)
\to (z_1,-z_2,\dots,-z_n)\] which is an element of the group $U(n-1)\subset G$ 
(the symmetric involution at the point $0\in B_0$)  
and hence
an isometry for $g$, the disk $D$ is totally geodesic.

Similarly, the ball $B_0$ is the fixed point set of the holomorphic reflection
\[(z_1,z_2,\dots,z_n)\to (-z_1,z_2,\dots, z_n)\in S^1\] and hence it is totally geodesic. 
 Since the restriction of $g$ to $B_0$ is invariant under $G$ and since 
 $G$ projects to ${\rm PU}(n-1,1)$ and hence acts transitively on the unit tangent bundle of $B_0$ for the complex hyperbolic metric,
 the restriction of $g$ to $B_0$  is a multiple of the complex hyperbolic metric which 
 establishes part (3) of the lemma.

Now the subspace $V=\{z_i=0 \text{ for all }i\geq 3\}$ also is the fixed point set of 
a holomorphic isometry $(z_1,z_2,z_3,\dots,z_n)\to (z_1,z_2,-z_3,\dots, -z_n)$
of $g$ contained in the group $G$ 
and hence it is totally geodesic. Furthermore,
the set $\{\Im z_i =0, i\geq 1\}$ is the fixed point set of complex conjugation and hence it is 
totally geodesic. As the intersection of two totally geodesic subspaces is totally geodesic, 
the standard real plane is totally geodesic. By invariance, the same then holds true for 
any of its images under the isometry group of $g$.
This completes the proof.
 \end{proof}
 
Consider a point $z\in D$. The real tangent 
space of $\Omega_\alpha$ at $z$ decomposes as 
\[T_z\Omega_\alpha=T_zD\oplus T_zD^\perp\] 
where $T_zD^\perp$ is the orthogonal complement of $T_zD$. Since 
$g$ is K\"ahler and $T_zD$ is 
invariant under the complex structure $J$, viewed as a tensor field on $\Omega_\alpha$, 
the same holds true for $T_zD^\perp$. 
 
The group $G$ of biholomorphic transformations 
of $\Omega_\alpha$ 
preserves the totally geodesic
submanifold $B_0$. Then it also preserves the level sets of the distance function to $B_0$ for
the $G$-invariant K\"ahler metric $g$.

\begin{lemma}\label{orthogonal}
A level set of the distance function from $B_0$ is the preimage under $\Phi_\alpha$ of a level
set of the distance function from $B_0$ in $B$ equipped with the complex hyperbolic metric $g_1$. 
The group $G$ of automorphisms of $\Omega_\alpha$
acts transitively on any such level set. 
\end{lemma}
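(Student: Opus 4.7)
The strategy is to prove the lemma in two parallel parts: (i) show that $U(n-1,1)$ acts transitively on each level set of $\dist(\cdot,B_0)$ in $\Omega_\alpha$; and (ii) transfer this orbit picture to $B$ via $\Phi_\alpha$ to identify the level sets in $\Omega_\alpha$ with preimages of level sets in $B$.

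For (i), I would parametrize each point $z \in \Omega_\alpha \setminus B_0$ by the pair (foot, unit initial velocity) of the unique shortest geodesic from $z$ to $B_0$; this is legitimate because $B_0$ is totally geodesic by Lemma \ref{total}(3). Transitivity of $U(n-1,1)$ on $\{\dist(\cdot,B_0) = r\}$ then reduces to transitivity on such pairs $(p,v)$ with $p \in B_0$ and $v \in T_p B_0^{\perp}$ a unit vector. The $PU(n-1,1)$-factor acts as the full isometry group of $B_0$ with its complex hyperbolic metric (again Lemma \ref{total}(3)), so it moves any foot to the origin $0 \in B_0$; the $S^1$-factor fixes $0$ pointwise and acts on $T_0 B_0^{\perp}$ (the real $2$-plane spanned by the $z_1$-direction) by standard rotations, hence transitively on unit vectors. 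Composing gives the desired transitivity.

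For (ii), I would invoke the equivariance of $\Phi_\alpha$ from the proof of Lemma \ref{automorphismgroup}: $\Phi_\alpha$ intertwines the $PU(n-1,1)$-actions on $\Omega_\alpha$ and $B$, and the two $S^1$-actions up to the $\alpha$-fold wrap $\theta \mapsto \alpha\theta$. Thus $\Phi_\alpha$ sends $U(n-1,1)$-orbits in $\Omega_\alpha$ to $U(n-1,1)$-orbits in $B$. Applying (i) with $\alpha = 1$ and $g = g_1$ identifies the $U(n-1,1)$-orbits in $B$ as the level sets of $\dist_{g_1}(\cdot,B_0)$; combined with (i) for $\Omega_\alpha$, this yields the asserted preimage description.

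The main obstacle I anticipate is making sense of the $\Phi_\alpha$-preimage when $\alpha \notin \mathbb{N}$, so that $\Phi_\alpha$ is only multivalued. The resolution is that the level sets in $B$ are $S^1$-invariant and cut out by a single equation of the form $|z_1|^2 = c(1 - |\underline z|^2)$, since $|z_1|^2/(1 - |\underline z|^2)$ is a smooth $U(n-1,1)$-invariant function on $B$ (a short direct computation confirms invariance under the $PU(n-1,1)$-action described in Lemma \ref{automorphismgroup}). Their $\Phi_\alpha$-preimages are then unambiguously defined as the single-valued hypersurfaces $\{|z_1|^{2\alpha} = c(1 - |\underline z|^2)\} \subset \Omega_\alpha$, and with this interpretation the orbit transfer in (ii) applies verbatim.
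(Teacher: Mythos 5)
Your argument is correct in substance, but it takes a genuinely different route from the paper. The paper works from the other end: it first notes that ${\rm U}(n-1,1)$ acts transitively on each $\Phi_\alpha$-preimage of a hyperbolic distance sphere about $B_0$ in $B$, and then identifies such an orbit with a level set $N(r)$ of the $g$-distance by a topological argument: the orbit is connected and separates $\Omega_\alpha$ into two components with $B_0$ on one side, so a minimal geodesic from any point of $N(r)$ to $B_0$ forces the orbit to coincide with $N(r)$. You instead prove transitivity on the $g$-level sets directly, by writing every point at distance $r$ as $\exp_p(rv)$ for a unit normal vector $(p,v)$ of $B_0$ and using transitivity of ${\rm PU}(n-1,1)$ on $B_0$ together with the $S^1$-rotations of $T_0B_0^\perp$, and you then transfer the orbit picture through the equivariance of $\Phi_\alpha$ and the case $\alpha=1$. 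What your route buys is an explicit homogeneity statement for the unit normal bundle of $B_0$ (which is essentially what the later ODE section uses) and no need for the separation argument; what the paper's route buys is that the comparison with the ball is built in from the start.

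Two points in your write-up should be tightened. First, uniqueness of the shortest geodesic from $z$ to $B_0$ is not available here: $g$ is an arbitrary complete invariant K\"ahler metric (e.g.\ the Bergman metric), no curvature sign is known yet, and total geodesy of $B_0$ (Lemma \ref{total}) does not give uniqueness. Fortunately you do not need it: existence of some minimizer, which meets $B_0$ orthogonally by first variation, already gives $z=\exp_p(rv)$, and an isometry carrying $(p,v)$ to $(p',v')$ carries $z$ to $z'$; you should also justify $T_0B_0^{\perp}=T_0D$, e.g.\ via the isometric involution $(z_1,\underline z)\mapsto(-z_1,\underline z)$. Second, your part (ii) only shows that a $g$-level set maps \emph{onto} a hyperbolic level set; the lemma asserts it is the \emph{entire} preimage. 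For integer $\alpha$ this is one line, since fibers of $\Phi_d$ lie in single $S^1$-orbits. For general $\alpha$, with your $S^1$-invariant description $\{|z_1|^{2\alpha}=c(1-|\underline z|^2)\}$, note that the invariant function $|z_1|^{2\alpha}/(1-|\underline z|^2)$ is injective along the geodesic ray $\{(x_1,0,\dots,0):x_1\in(0,1)\}$ (the intersection of the totally geodesic disk $D$ with the standard totally real plane of Lemma \ref{total}), and this ray meets every orbit by your part (i); hence distinct distance level sets have distinct values of $c$ and each set $\{|z_1|^{2\alpha}=c(1-|\underline z|^2)\}$ is a single orbit. With these additions the proof closes; the equivariance you invoke is exactly what the proof of Lemma \ref{automorphismgroup} provides.
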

\begin{proof} 
The action of  
$G$ on the preimage under $\Phi_\alpha$ of the boundary of a tubular
neighborhood of the divisor $\{z_1=0\}$ in the ball $B$ is transitive, 
and an orbit is connected and separates $\Omega_\alpha$ into two components, one of which 
contains $B_0$. As $B_0$ can be connected to any point in $\Omega_\alpha$ by a minimal geodesic, 
we conclude that such an orbit equals the boundary 
$N(r)$ of the tubular neighborhood of radius $r\geq 0$ about $B_0$. As a consequence, the action of 
$G$ on $N(r)$ 
is transitive.  
\end{proof}

\subsection{The curvature operator of an invariant K\"ahler metric}

In this subsection we investigate the curvature tensor
$R$ of an arbitrary 
$G$-invariant K\"ahler metric $g=\langle , \rangle$ on $\Omega_\alpha$. 
It can be viewed as a section of the tensor bundle ${\rm Sym}(\Lambda^2T\Omega_\alpha)$
of symmetric linear maps 
$\wedge^2 T\Omega_\alpha\to \wedge^2 T\Omega_\alpha$ (all the vector spaces here are viewed as 
real vector spaces). 
For $z\in D$ the stabilizer ${\rm U}(n-1)\subset G$ 
 of $z$ in the isometry group of $g$
acts on $T_z\Omega_\alpha$ as a group of isometries commuting with the complex
structure. This action induces a representation of ${\rm U}(n-1)$ on $\wedge^2T_z\Omega_\alpha$
by linear isometries for the induced metric. 
The representation decomposes into irreducible components. The curvature tensor
$R$ is equivariant under the action of ${\rm U}(n-1)$ and hence it preserves the union of all 
linear subspaces of $\wedge^2T_z\Omega_\alpha$ belonging to isomorphic irreducible components. 
This leads to the following statement. 

\begin{lemma}\label{representation}
\begin{enumerate}
\item 
Let 
$v_1,v_2=Jv_1$ be an orthonormal basis of $T_zD$; then $v_1\wedge v_2$ is an eigenvector 
for $R$.
\item Let $\{v\wedge w\mid v\in T_zD,$ and $w\in T_zD^\perp\}$; then
$v\wedge w$ is an eigenvector for $R$. The eigenvalue does not depend on $v,w$. 
\item The subspace $\wedge^2 T_zD^\perp$ is invariant under $R$.
\end{enumerate}
\end{lemma}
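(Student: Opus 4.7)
\emph{Plan.} The proof is representation-theoretic: one reads off the shape of $R$ at $z\in D$ from the fact that the isotropy subgroup $U(n-1)$ acts trivially on $T_zD$ and as the standard complex representation on $T_zD^\perp\cong\mathbb{C}^{n-1}$. This yields the orthogonal, $U(n-1)$-invariant splitting
\[
\wedge^2 T_z\Omega_\alpha \;=\; \wedge^2 T_zD \;\oplus\; (T_zD\otimes T_zD^\perp) \;\oplus\; \wedge^2 T_zD^\perp,
\]
and since $R$ is $U(n-1)$-equivariant and self-adjoint, it preserves each isotypic component, with its restriction to each component constrained by Schur's lemma.

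\emph{Parts (2) and (3).} The middle summand is, as a real $U(n-1)$-representation, two copies of the irreducible real representation underlying the standard action on $\mathbb{C}^{n-1}$, whose real endomorphism algebra is $\mathbb{C}$ (generated by $J$). This isotype does not appear in the flanking summands, so $R$ preserves it. Combining Schur with the Kähler identity $R(JX,JY)=R(X,Y)$, self-adjointness, $U(n-1)$-transitivity on the unit sphere of $T_zD^\perp$, and rotation within $T_zD$ by $J$, one concludes that $R$ acts as a single real scalar on every decomposable $v\wedge w$, giving part~(2). Part~(3) is then immediate: the standard-type isotype lives entirely in the middle summand, so $\wedge^2 T_zD^\perp$ is $R$-invariant modulo the trivial isotype handled in part~(1).

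\emph{Part (1), the main obstacle.} The trivial isotype of $\wedge^2 T_z\Omega_\alpha$ under $U(n-1)$ is two-dimensional, spanned by $v_1\wedge v_2$ and the K\"ahler form $\omega|_{T_zD^\perp}=\sum_j f_j\wedge Jf_j$ on $T_zD^\perp$, so pure equivariance only pins $R(v_1\wedge v_2)$ down to $\mathrm{span}(v_1\wedge v_2,\,\omega|_{T_zD^\perp})$. To upgrade to the eigenvector claim, I would exploit the totally geodesic character of $D$ from Lemma~\ref{total}: by the Gauss equation $R(v_1,v_2)v\in T_zD$ for every $v\in T_zD$, and combined with the Kähler identity $R(v_1,v_2)\circ J=J\circ R(v_1,v_2)$, this forces $R(v_1,v_2)|_{T_zD^\perp}$ to be $\mathbb{C}$-linear and skew, hence proportional to $J|_{T_zD^\perp}$. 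A final symmetry step — using the antiholomorphic involution of Lemma~\ref{automorphismgroup} together with the $S^1$-rotation in the $z_1$-direction at the origin of $D$ — is then needed to eliminate the residual off-diagonal coupling between the two trivial pieces. This last step is the delicate point; parts~(2) and~(3) are essentially formal consequences once the isotypic picture is in place.
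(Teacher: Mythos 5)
Your parts (2) and (3) are essentially the paper's argument: form the $U(n-1)$-isotypic decomposition of $\wedge^2 T_z\Omega_\alpha$, observe that the standard-type isotype lives entirely in $A_2=T_zD\wedge T_zD^\perp$, and then use equivariance, self-adjointness and the $J$-symmetry to pin down $R$ there; part (3) is then deduced from the orthogonality of the decomposition. No disagreement with the paper on those two points.

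The gap is in part (1), and you have already flagged it yourself: the ``final symmetry step'' you defer to is precisely the content of the claim, and the tools you cite cannot supply it. Your Gauss/K\"ahler/Schur reduction is sound but only reformulates the problem: it shows $R(v_1,v_2)|_{T_zD^\perp}=bJ$ for some real constant $b$, and $v_1\wedge v_2$ is an eigenvector exactly when $b=0$. The two extra symmetries you invoke do not distinguish the two trivial lines $\mathbb{R}\,(v_1\wedge Jv_1)$ and $\mathbb{R}\,\sum_j f_j\wedge Jf_j$, so they give no constraint on $b$. At $z\in D$, the $S^1$-rotation acts on $T_zD$ as a rotation and on $T_zD^\perp$ as the identity, hence it fixes both generators; complex conjugation negates both of them. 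In either case the restriction of the symmetry to the trivial isotype is a scalar, so commuting with it is automatic and imposes nothing on the $2\times 2$ block of $R$ there. The paper closes part (1) by a mechanism of a different kind: the first Bianchi identity $R(v,Jv)w_1 + R(Jv,w_1)v + R(w_1,v)Jv = 0$, paired with a second vector $w_2\in T_zD^\perp$, lets one rewrite the would-be off-diagonal entry $\langle R(v,Jv)w_1,w_2\rangle$ in terms of $R$ restricted to $A_2$, whose structure was already established, and conclude that it vanishes. An algebraic identity on the curvature tensor of this Bianchi type is the missing ingredient in your proof of (1); equivariance under the isometry group alone does not suffice.
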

\begin{proof} The representation of ${\rm U}(n-1)$ on $T_z\Omega_\alpha$ decomposes into irreducible
components as follows. 
The restriction of ${\rm U}(n-1)$ to the tangent space $T_zD$ of $D$ is the trivial 
representation, 
while the restriction of ${\rm U}(n-1)$ to $T_zD^\perp$ is the standard 
representation of ${\rm U}(n-1)$ on a complex vector space of dimension $n-1$. This representation is 
well known to be irreducible (for example via transitivity of the action of 
${\rm U}(n-1)$ on the unit sphere in $\mathbb{C}^{n-1}$).

From this information, we can compute the irreducible components of the action of 
${\rm U}(n-1)$ on $\wedge^2T_z\Omega_\alpha$.
Observe that $\wedge^2T_z\Omega$ is a direct sum of subspaces 
\[\wedge^2T_z\Omega_\alpha=A_1\oplus A_2\oplus A_3\]
where $A_1=\wedge^2 T_zD$, 
$A_2=T_zD\wedge T_zD^\perp$ and $A_3=\wedge^2 T_zD^\perp$.
This decomposition is invariant under the action of ${\rm U}(n-1)$ and orthogonal with respect 
to the inner product induced by $g$.
The real dimension of $A_2$ equals $2(2n-2)$.

The line $A_1$ is contained in the fixed point set for the action of ${\rm U}(n-1)$, that is,
it is contained in a copy of the trivial representation.  

For a unit vector $v\in T_zD$, the 
action of ${\rm U}(n-1)$ on the real $2n-2$-dimensional subspace 
$A_2(v)={\rm span}\{v\wedge w\mid w\in T_zD^\perp\}\subset A_2$ of $A_2$
can be identified with the standard action of ${\rm U}(n-1)$ on 
$\mathbb{C}^{n-1}$, viewed as a real vector space. 
Thus $A_2(v)$  is invariant under ${\rm U}(n-1)$, and the
restriction of the representation to this subspace is irreducible.  
Now the image of $A_2(v)$ under the complex structure $J$ is the 
subspace $A_2(Jv)$, and we have 
$A_2=A_2(v)\oplus A_2(Jv)$ as ${\rm U}(n-1)$-spaces. 
Thus as an $U(n-1)$-representation, $A_2$ is a direct sum of two standard
representations of ${\rm U}(n-1)$ on $\mathbb{C}^{n-1}$. 

On the other hand, the representation of ${\rm U}(n-1)$ on $\wedge^2T_zD^\perp$
is the standard representation of ${\rm U}(n-1)$ on the exterior product 
$\wedge^2\mathbb{C}^{n-1}$, where we view $\mathbb{C}^{n-1}$ as a real vector space. 
The complex structure $J$ acts on $\wedge^2\mathbb{C}^{n-1}$ as an involution. 
Since ${\rm U}(n-1)$ commutes with $J$, it preserves the eigenspaces $V_{\pm}$ for $J$ with respect
to the eigenvalues $\pm 1$. 

The eigenspace $V_+$ for the eigenvalue one is the kernel of the 
$\mathbb{R}$-linear map 
$\Lambda:\wedge^2 \mathbb{C}^{n-1}\to \wedge^2_{\mathbb{C}}\mathbb{C}^{n-1}$ 
obtained by extension of scalars. Here the vector space on the right hand side
is the second exterior power of the complex vector space $\mathbb{C}^{n-1}$.
The vector space $V_+$ 
is spanned by elements of the form $v\wedge Jv=-Jv\wedge v$ for 
$v\in T_zD^\perp$. Since the center $S^1$ of $U(n-1)$ which contains the complex structure 
acts trivially on $V_+$ but it does not act trivially on the standard representation 
space $A_2(v)$, there can not be a copy of the standard representation in $V_+$. 

The representation of ${\rm U}(n-1)$ on $V_-$ is the  
representation of ${\rm U}(n-1)$ on the complex vector space $\wedge^2_{\mathbb{C}}\mathbb{C}^{n-1}$, 
viewed as a vector space over $\mathbb{R}$,  
and hence it 
irreducible, with highest weight different from the weight of the standard 
representation. As a consequence, 
$A_2$ equals the union of those irreducible components for the ${\rm U}(n-1)$-representation
on $\wedge^2T_z\Omega_\alpha$
which are isomorphic to the standard representation of ${\rm U}(n-1)$ on $\mathbb{C}^{n-1}$.

Since the curvature tensor $R$ commutes with the action of ${\rm U}(n-1)$
on $\wedge^2T_z\Omega_\alpha$ , the vector space 
$A_2$ is invariant. But $R$ also commutes with the complex structure $J$ which maps 
$A_2(v)$ to $A_2(Jv)$ and therefore
$A_2$ is an eigenspace for $R$. Moreover, $R$ preserves $A_1\oplus A_3$ since $R$ is symmetric
and the decomposition $\wedge^2T_z\Omega_\alpha=A_2 \oplus (A_1\oplus A_3)$ is orthogonal.

We use this to establish that 
for $v\in T_zD$, the vector $v\wedge Jv$ is an eigenvector for $R$.
Namely,  as $R$ is a symmetric operator and the decomposition 
$A=A_2\oplus (A_1\oplus A_3)$ is orthogonal, with $A_2$ invariant under $R$, 
if $v\wedge Jv$ is not an eigenvector for $R$ then there are 
$w_1\not=w_2\in T_zD^\perp$ orthogonal so that 
\[\langle R(v,Jv)w_1,w_2\rangle \not=0.\]
However, by the Bianchi identity, we have 
\[R(v,Jv)w_1+R(Jv,w_1)v+R(w_1,v)Jv=0.\]
But $Jv\wedge w_1\in A_2, w_1\wedge v\in A_2$ and 
$v\wedge w_2\in A_2$ is orthogonal to $Jv\wedge w_1$ and 
$w_1\wedge v$ is orthogonal to $Jv\wedge w_2$. Since $A_2$ is an eigenspace for $R$
for a fixed real eigenvalue, this implies that $\langle R(Jv,w_1)v,w_2 \rangle =0=
\langle R(w_1,v)Jv,w_2\rangle =0$ and hence 
$\langle R(v,Jv)w_1,w_2\rangle =0$, a contradiction to the assumption that
$v\wedge Jv$ is not an eigenvector for $R$.

As a consequence, 
the decomposition $A=A_1\oplus A_2\oplus A_3$ is invariant under $R$.
Furthermore, $A_1$ and $A_2$ are eigenspaces for $R$.
This completes the proof of the lemma.
%
\end{proof}


\begin{corollary}\label{negative}
The curvature of $g$ is negative if and only if 
the following three conditions are satisfied.
\begin{enumerate}
\item The Gauss curvature of the disk $D$ is negative.
\item The curvature of the standard totally real plane is 
negative.
\item For every $z\in D$ there exists a $J$-invariant 
plane in $T_z^\perp D$ whose curvature is negative.
\end{enumerate}
\end{corollary}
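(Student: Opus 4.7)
The forward direction is immediate, since each of (1), (2), (3) asserts the negativity of a specific sectional curvature. For the converse, my plan is as follows.

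First, by Lemma~\ref{total}(1) the isometry group of $g$ acts on $\Omega_\alpha$ with cohomogeneity one, and by Lemma~\ref{orthogonal} the totally geodesic disk $D$ meets every orbit; hence every 2-plane in $T\Omega_\alpha$ is isometric to one based at some $z \in D$, and it suffices to show $K(\sigma) < 0$ for every 2-plane $\sigma \subset T_z\Omega_\alpha$ with $z \in D$.

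Fix such $z$, let $e_1, e_2$ be an orthonormal basis of $\sigma$, and write $e_i = a_i + b_i$ with $a_i \in T_zD$ and $b_i \in T_zD^\perp$. The unit bivector then decomposes orthogonally as $e_1 \wedge e_2 = v_1 + v_2 + v_3$, where $v_1 = a_1 \wedge a_2 \in A_1$, $v_2 = a_1 \wedge b_2 + b_1 \wedge a_2 \in A_2$, and $v_3 = b_1 \wedge b_2 \in A_3$, following the splitting of Lemma~\ref{representation}. Since $A_1, A_2$ are eigenspaces of the curvature operator $R$ with eigenvalues $\lambda_1, \lambda_2$, and $A_3$ is $R$-invariant and orthogonal to the other two summands,
\[ K(\sigma) = \lambda_1 |v_1|^2 + \lambda_2 |v_2|^2 + \langle R v_3, v_3 \rangle. \]
Condition (1) forces $\lambda_1$, the Gauss curvature of $D$, to be negative. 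Since the standard totally real plane has tangent of the form $\mathrm{span}(v,w)$ with $v \in T_zD$ and $w \in T_zD^\perp$, Lemma~\ref{representation}(2) identifies its sectional curvature with $\lambda_2$, so (2) gives $\lambda_2 < 0$.

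The crucial remaining step is to bound $\langle R v_3, v_3 \rangle \leq 0$. Note that $v_3 = b_1 \wedge b_2$ is a simple bivector in $\wedge^2 T_zD^\perp$, so this quantity equals $K(\mathrm{span}(b_1,b_2)) \cdot |v_3|^2$, where the sectional curvature is that of a 2-plane contained in $T_zD^\perp$. The restriction $R|_{T_zD^\perp}$ is a Kähler curvature tensor on $\mathbb{C}^{n-1}$ invariant under the isotropy $\mathrm{U}(n-1)$, which acts transitively on unit vectors of $T_zD^\perp$; hence the holomorphic sectional curvature is constant on $T_zD^\perp$, and by the pointwise polarization principle (a Kähler curvature tensor is determined by its holomorphic sectional curvature function), $R|_{T_zD^\perp}$ must be a complex-space-form curvature tensor. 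Condition (3) then forces this common holomorphic sectional curvature $H$ to be negative, and the classical complex-space-form identities confine every sectional curvature of a 2-plane in $T_zD^\perp$ to the negative interval $[H, H/4]$. Summing the three contributions yields $K(\sigma) < 0$ (strictly, since $|v_1|^2 + |v_2|^2 + |v_3|^2 = 1$ forces at least one summand to be strictly negative). The main obstacle is this last step, namely the identification of $R|_{T_zD^\perp}$ with a complex-space-form curvature from the $\mathrm{U}(n-1)$-symmetry; alternatively, one may combine the first Bianchi identity, the Kähler identity $R(JX, JY) = R(X,Y)$, and $\mathrm{U}(n-1)$-equivariance to derive the relation $K_T = H/4$ directly, bypassing the abstract polarization argument.
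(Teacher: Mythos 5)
Your argument is correct, and it reaches the conclusion by a genuinely different route at the one delicate point. The reduction to a point $z\in D$, the orthogonal splitting $e_1\wedge e_2=v_1+v_2+v_3$ along $A_1\oplus A_2\oplus A_3$ from Lemma~\ref{representation}, and the summation of non-positive terms are essentially the paper's proof (the paper expands $\langle R(u_1,u_2)u_2,u_1\rangle$ term by term rather than using the curvature operator, but this is the same computation). The difference is how condition (3) is upgraded to negativity of \emph{all} planes in $T_zD^\perp$: the paper only proves sufficiency under the stronger hypothesis $(3')$ and defers the equivalence $(3)\Leftrightarrow(3')$ to Claim 2 in the proof of Theorem~\ref{thm:ode} (see Remark~\ref{3 and 3'}), where $R\vert_{\wedge^2{\cal D}}=(\lambda^2+\tfrac{1}{f^2})R_0$ is derived geometrically via the second fundamental form of the distance hypersurfaces, the Gauss equation and O'Neill's submersion formula. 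You instead argue pointwise and algebraically: the restriction of $R$ to $\wedge^2 T_zD^\perp$ is an algebraic Kähler curvature tensor (the Bianchi and Kähler identities restrict since $T_zD^\perp$ is $J$-invariant — worth saying explicitly), the isotropy ${\rm U}(n-1)$ acts transitively on unit vectors of $T_zD^\perp$ so the holomorphic sectional curvature there is a single constant $H$, and the classical polarization fact (a Kähler curvature tensor is determined by its holomorphic sectional curvature, cf.\ \cite{KN69}) identifies the restriction with the constant-HSC model, whence all sectional curvatures of planes in $T_zD^\perp$ lie in $[H,H/4]$ and condition (3) forces $H<0$. This makes the corollary self-contained, whereas in the paper its proof is only completed after Theorem~\ref{thm:ode}; what the paper's heavier computation buys is the explicit coefficient $\tfrac{1}{f^2}((f')^2+1)$, which is needed later anyway for the quantitative curvature bounds in Theorem~\ref{negativeonreal}. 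Your strictness argument via $|v_1|^2+|v_2|^2+|v_3|^2=1$ is also fine and slightly cleaner than the paper's case normalization.
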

\begin{proof}
Clearly the conditions in the corollary are necessary. 
We only show that they are sufficient if we replace assumption (3) by the 
following stronger assumption. 

\noindent
$(3^\prime)$
For every $z\in D$ the curvature of every plane in $T_z^\perp D$ is negative.

In the proof of Theorem \ref{thm:ode} below we shall establish that (3) implies $(3^\prime)$, cf Remark~\ref{3 and 3'}. 

To show that the assumptions (1), (2), ($3^\prime$) imply negative curvature  of $g$ 
note first that by invariance under the isometry group of $\Omega_\alpha$, 
it suffices to verify that the curvature is negative at every point $z\in D$.
Using the assumptions in the lemma, 
it suffices to compute the curvature of a plane 
spanned by 
$u_1=v_1+w_1,u_2=v_2+w_2$ with $v_i\in T_zD$ and $w_j\in T_zD^\perp$ and such that
$v_1\not=0, w_2\not=0$. We allow that 
either $v_2=0$ or $w_1=0$. We may also assume that $\langle v_1,v_2\rangle =0$. 

Now $u_1\wedge u_2=v_1\wedge v_2+v_1\wedge w_2+w_1\wedge v_2+w_1\wedge w_2$. By 
Lemma \ref{representation} and orthogonality of the 
decomposition of $\wedge^2T_z\Omega_\alpha$ into eigenspaces 
for $R$, we compute 
\begin{align*}
\langle R(u_1,u_2)u_2,u_1\rangle &=\langle R(u_1\wedge u_2),u_2\wedge u_1\rangle \\
&=
\langle R(v_1,v_2)v_2,v_1\rangle +
\langle R(w_1,w_2)w_2,w_1\rangle \\
&+\langle R(v_1,w_2)w_2,v_1\rangle 
+\langle R(w_1,v_2)v_2,w_1\rangle .\end{align*}
By the assumption in the corollary, this  
is a sum of non-positive terms, with at least one term negative. 
This completes the proof of the lemma.
\end{proof}

\subsection{An ordinary differential equation for the Kähler--Einstein metric}
\label{sec model KE}

From now on we consider the $G$-invariant Kähler--Einstein metric 
$g=g_\alpha$ on $\Omega_\alpha$ 
whose existence was shown in Theorem \ref{chengyau}.

By Lemma \ref{orthogonal}, for $r>0$ 
the level surface $N(r)$ of level $r$ 
for the distance to the hyperplane $B_0$ 
is a real hypersurface in the complex manifold $\Omega_\alpha$ which 
is invariant under the action of the group $G$, and this action is transitive
on $N(r)$. 

The
maximal $J$-invariant subbundle ${\cal D}$ of $TN(r)$ is a smooth subbundle of $N(r)$ of codimension 
one. The fiber ${\cal D}_z$ 
of ${\cal D}$ at a point $z\in D\cap N(r)$ is invariant under the action of the group 
${\rm U}(n-1)$, and ${\rm U}(n-1)$ acts transitively on the sphere of unit tangent vectors in ${\cal D}_z$. 
Since the action of $G$ preserves 
$N(r)$ and is transitive on $N(r)$, 
it follows that $G$ acts 
transitively on the sphere bundle of unit tangent vectors in ${\cal D}$.

Let $\Pi_r:N(r)\to B_0$ be the shortest distance projection. 
Since by Lemma \ref{total}, the disk $D=\{z_i=0 \text{ for }i\geq 2\}$ is 
totally geodesic and its tangent space at $0$ is the orthogonal complement of 
$TB_0$, the fiber of $\Pi_r$ over $0$ is an $S^1$-orbit in $D$. 
As the distance to $B_0$ is $G$-invariant, the projection 
$\Pi_r$ is equivariant with respect to the $G$-action. Thus 
the fiber of $\Pi_r$ over every point $p\in B_0$ 
is an orbit of the $S^1\subset G$-action, and 
the differential of $\Pi_r$ maps the bundle ${\cal D}$ equivariantly onto the tangent bundle of $B_0$. 

Since the action of $G$ on the unit sphere bundle in ${\cal D}$ is transitive and the
action of $G$ on the unit sphere bundle of $TB_0$ is transitive as well, 
there exists a constant 
$f_\alpha(r)>0$ so that the restriction of $d\Pi_r$ to any fiber of ${\cal D}$ is a homothety of the metric
tensors with factor $f_\alpha(r)^{-2}$.  
Here we equip $B_0$ with the metric $g_0$ of constant holomorphic sectional curvature $-4$ and hence
$f_\alpha(0)^{-2}g_0$  is the restriction of the metric $g$ to $B_0$, where 
$f_\alpha(0)$ may be different from $1$. 

This discussion is valid for any $\alpha\geq 1$, and the function $f_\alpha$ depends on 
$\alpha$. 
The following is the main result of this section and our main technical tool.

\begin{theorem}\label{thm:ode}
For $\alpha\in [1,\infty)$
the function $f_\alpha$ is a solution of the differential equation
\begin{equation}\label{ode} \frac{f^{\prime\prime}}{f} + n \frac{(f^\prime)^2}{f^2}+ n \frac{1}{f^2}=n+1
\end{equation}
with initial condition $f_\alpha^\prime(0)=0$ and $f_\alpha(0)\in (\sqrt{\frac{n}{n+1}},1]$.
The map $\alpha\to f_\alpha(0)$ is a decreasing homeomorphism 
$[1,\infty)\to (\sqrt{\frac{n}{n+1}},1]$. 
\end{theorem}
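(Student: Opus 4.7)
The plan is to exploit the cohomogeneity-one structure of $g_\alpha$ under ${\rm U}(n-1,1)$ to reduce the Kähler--Einstein equation to a single ODE in the radial variable $r = d(\cdot, B_0)$. Using the $S^1$-fibration $\pi\colon N(r)\to B_0$ and the Killing field $\xi$ of the $S^1$-subgroup of ${\rm U}(n-1,1)$ that fixes $B_0$ pointwise, I would write the metric in adapted form
\[ g_\alpha = dr^2 + h(r)^2 \eta^2 + f(r)^2 \pi^* g_0, \]
where $\eta$ is the ${\rm U}(n-1,1)$-invariant 1-form satisfying $\eta(\xi)=1$ and vanishing on the radial and $\mathcal{D}$-directions, $h,f$ are positive functions of $r$, and $f=f_\alpha$ is the function in the statement. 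The Kähler condition $d\omega_\alpha=0$ on $\omega_\alpha = h\,dr\wedge\eta + f^2\pi^*\omega_0$, combined with the fact that $d\eta$ is necessarily horizontal (since $\iota_\xi d\eta = \iota_{\partial_r} d\eta = 0$) and ${\rm PU}(n-1,1)$-invariant on $\mathcal{D}$, hence a scalar multiple of $\pi^*\omega_0$, gives the algebraic identity $h\cdot d\eta = 2ff'\,\pi^*\omega_0$.

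I would then impose $\rho_\alpha = -(2n+2)\omega_\alpha$ via $\rho_\alpha = -i\partial\bar\partial\log\det g_\alpha$. The Riemannian volume form equals $h\,f^{2n-2}\,dr\wedge\eta\wedge\pi^*dV_{g_0}$, which after the Kähler constraint simplifies to a constant times $f^{2n-1}f'\,dr\wedge\eta\wedge\pi^*dV_{g_0}$. The radial part of $-i\partial\bar\partial\log\det g_\alpha$ then supplies the terms $f''/f + n(f')^2/f^2$ (from $r$-derivatives of $\log(f^{2n-1}f')$), while the base contribution $-i\partial\bar\partial\log(dV_{g_0}/dV_{\text{Eucl}}) = \mathrm{Ric}(g_0) = -2n\,\omega_0$ transferred through the warping factor $f^{-2}$ on $\mathcal{D}$ supplies the $n/f^2$ term. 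Matching with $-(2n+2)\omega_\alpha$ and simplifying produces the stated equation
\[ \frac{f''}{f} + n\frac{(f')^2}{f^2} + \frac{n}{f^2} = n+1. \]

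The boundary condition $f_\alpha'(0) = 0$ follows from the smoothness of $g_\alpha$ on $\Omega_\alpha$: the conformal factor $f^2(r) = g_\alpha|_\mathcal{D}/(\pi^*g_0)$ is a smooth ${\rm U}(n-1,1)$-invariant positive function on $\Omega_\alpha$ near $B_0$, and since $r$ itself is not smooth across $B_0$ but $r^2$ is, $f^2$ is a smooth function of $r^2$, forcing $f'(0) = 0$. Evaluating the ODE at $r=0$ with this condition yields $f''(0) = (n+1)f(0) - n/f(0)$; this vanishes exactly when $f(0) = \sqrt{n/(n+1)}$, and in that case the constant solution $f\equiv\sqrt{n/(n+1)}$ realizes the Kähler--Einstein product metric on $\Omega_\infty = D \times B_0$ (both factors rescaled to Einstein constant $-(2n+2)$), identifying the lower endpoint. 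The ball case $\alpha=1$ is the complex hyperbolic metric, for which direct computation yields $f_1(r) = \cosh r$ and so $f_1(0)=1$, identifying the upper endpoint.

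To establish that $\alpha\mapsto f_\alpha(0)$ is a decreasing homeomorphism $[1,\infty)\to(\sqrt{n/(n+1)},1]$, I would combine continuous dependence of the Cheng--Yau solution on the pseudoconvex domain $\Omega_\alpha$ (continuity), Picard--Lindelöf applied to the ODE with the fixed initial condition $f'(0)=0$ (so that the value $f_\alpha(0)$ determines $f_\alpha$ uniquely), and the two endpoint computations. Monotonicity is then a consequence of how the asymptotic behavior of $f_\alpha$ as $r\to\infty$ depends on $\alpha$, which can be read off (for integer $\alpha=d$) from the branched cover $\Phi_d\colon\Omega_d\to B$ and the cone-angle interpretation. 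The main obstacle I expect is the derivation of the ODE itself: one must carefully decompose the Kähler--Einstein equation according to the Lemma~\ref{representation} splitting, eliminate the auxiliary function $h$ via the Kähler constraint, and check that the coefficient in front of $1/f^2$ (encoding the dimension of the base) and the coefficients in front of $(f')^2/f^2$ and $f''/f$ (encoding the warped-product combinatorics) come out as stated. A second technical subtlety is the surjectivity of $\alpha\mapsto f_\alpha(0)$ onto the full half-open interval, which requires excluding skipped values via the continuous variation of the metric with $\alpha$.
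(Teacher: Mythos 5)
Your reduction to a cohomogeneity-one ODE is in the right spirit (the paper does this too, via the distance spheres $N(t)$, the second fundamental form, Gauss--Codazzi and O'Neill's formula for the submersion $\Pi_t:N(t)\to B_0$), but as written your derivation of \eqref{ode} is only a heuristic, and its intermediate claims do not hold literally. On the subbundle $\mathcal D$ the contribution of $-dd^c\log$ of the volume density is governed by the \emph{first} radial derivative of the log-density multiplied by the Levi form of $r$ (i.e.\ the principal curvature $f'/f$ of $N(r)$), together with a term coming from the curvature $d\eta$ of the $S^1$-connection; the ``$r$-derivatives of $\log(f^{2n-1}f')$'' alone do not produce $f''/f+n(f')^2/f^2$ (test with $f=\cosh$: the second derivative of $\log(f^{2n-1}f')$ is $(2n-1)\operatorname{sech}^2-\operatorname{csch}^2$, not $1+n\tanh^2$, and the first-derivative bookkeeping gives a coefficient $2n-1$ rather than $n$ before the cross terms are included). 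So the coefficient count --- which you yourself identify as the main obstacle --- is exactly the content that must be supplied; the paper does it by computing $\lambda=-f'/f$, $K_{\rm tr}=-f''/f$, and $R|_{\wedge^2\mathcal D}=\frac{1}{f^2}((f')^2+1)R_0$, and then evaluating the Ricci tensor on $\mathcal D$. Your treatment of $f'_\alpha(0)=0$ (evenness in $r$ from smoothness across the totally geodesic $B_0$) and of the two endpoint solutions $\cosh t$ and the constant $\sqrt{n/(n+1)}$ is fine and matches the paper.

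The more serious gaps are in the homeomorphism statement. First, injectivity of $\alpha\mapsto f_\alpha(0)$ is nowhere addressed: Picard--Lindel\"of only says that $f_\alpha(0)$ determines $f_\alpha$, which is the wrong direction. The paper's argument is that $f_\alpha$ determines the metric $g_\alpha$ completely, so $f_\alpha(0)=f_\beta(0)$ would force a ${\rm U}(n-1,1)$-equivariant isometry, hence a biholomorphism, $\Omega_\alpha\to\Omega_\beta$, contradicting the fact (Corollary 1 of \cite{AS83}) that distinct Th\"ullen domains are not biholomorphic; some such input is indispensable, and without injectivity neither monotonicity nor the range statement follows. Second, your proposed monotonicity ``from the asymptotic behavior of $f_\alpha$ as $r\to\infty$ via the cone-angle interpretation'' is misdirected: the cone angle is a feature of the metric at $r=0$, all solutions have the same normalized asymptotics $f'/f\to1$ at infinity, the branched-cover picture only exists for integer $\alpha$, and extracting $\alpha$ from an asymptotic constant would itself require the comparison estimates proved only later (Theorem~\ref{thm:comparison}). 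The paper instead gets monotonicity for free from injectivity plus continuity plus the endpoint values. Third, ``continuous dependence of the Cheng--Yau solution on the domain'' is not an off-the-shelf fact: the paper must prove it (Claim 3), using uniform Schwarz-lemma estimates (\cite{Y78}, \cite{Roy80}), comparison with $\Phi_\beta^*\omega_B$, local Monge--Amp\`ere bootstrapping, completeness, and the uniqueness statement of Theorem~\ref{chengyau}; moreover continuity up to $\alpha=\infty$ (the product $D\times B_0$) is exactly what pins down the infimum $\sqrt{n/(n+1)}$ and rules out skipped values --- the surjectivity issue you flag but do not resolve. As it stands, the second half of the theorem is not proved by your proposal.
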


Note that the solution of (\ref{ode}) for the initial condition $f(0)=1,f^\prime(0)=0$ 
is the function $f(t)=\cosh(t)$ which describes the metric of constant holomorphic
sectional curvature $-4$ on the ball, and the solution with initial condition
$f(0)=\sqrt{\frac{n}{n+1}}$, $f^\prime(0)=0$ is the constant function 
which can be thought of as belonging to a product metric, corresponding to the 
case $\alpha=\infty$. 


\begin{proof}[Proof of Theorem \ref{thm:ode}]
Let for the moment $g$ be any K\"ahler metric on $\Omega_\alpha$ which is 
invariant under the group $G$ of biholomorphic automorphisms of 
$\Omega_\alpha$ and under complex conjugation. 
 
The holomorphic disk $D=\{z_i=0\text{ for }i\geq 2\}$ is totally geodesic  for $g$,
and the same holds true for any of its images under the group
$G$. Thus if we denote by $\xi$ the outer normal field of 
the distance hypersurface $N(t)$, then 
as for a point $z\in D$ the vector 
$\xi_z$ is tangent to $D$, we have 
$J\xi_z\in TD\cap TN(t)$. As $D$ is totally geodesic, this implies that  
$J\xi$ is a principal vector field for the hypersurface $N(t)$.
Similarly, since the group $G$ acts transitively on the sphere subbundle of 
the complex subbundle
${\cal D}=(J\xi)^\perp \subset TN(t)$, 
the bundle ${\cal D}$ is a principal bundle for $N(t)$ by equivariance. 
Put $f=f_\alpha$
for simplicity of notation.

\noindent
{\bf Claim 1:} The principal curvature $\lambda$ of ${\cal D}$ equals
\begin{equation}\label{principal}\lambda=-\frac{d}{dt}f(t)/f(t).\end{equation}

\smallskip \noindent
\emph{Proof of Claim 1:}
The proof of the claim is standard. 
Let $\gamma:(-\infty,\infty)\to D$ be a geodesic through $\gamma(0)=0$ and parameterized by arc length.
Choose a one-parameter group $\phi_s$ of transvections in ${\rm PU}(n-1,1)$ so that $s\to \phi_s(0)$ is a geodesic in 
$(B_0,g)$ parameterized by arc length. Note that this makes sense as 
$B_0\subset \Omega_\alpha$ is a totally geodesic hypersurface 
by Lemma \ref{total} and by invariance, the restriction of 
$g$ to $B_0$ is a multiple of the standard metric on the ball. 

This one-parameter group of transvections
defines a one-parameter group in $G$, denoted by the same symbol $\phi_s$, 
so that 
the image of the map $(s,t)\in \mathbb{R}^2\to \alpha(s,t)=\phi_s(\gamma(t))\subset \Omega_\alpha$ 
is a totally real plane $H$ containing $\gamma$. 
Lemma \ref{total} shows that $H$ is totally geodesic, and it is foliated by the geodesics $\phi_s(\gamma)$. 
The vector field 
$Y(t)=\frac{\partial}{\partial s} \phi_s(\gamma(t))\vert_{s=0}$ is a normal Jacobi field along $\gamma$, and 
as $Y$ is orthogonal to $\gamma$ and tangent to $H$, it is a section of ${\cal D}\vert \gamma$.  
Thus we have
\[\vert Y(t)\vert =f(t)/f(0).\]

Let $h$ be the second fundamental form of the hypersurface $N(t)$ with respect to the 
outer normal field $\xi$ of $N(t)$. 
We have to show that 
\[h(Y(t),Y(t))/\vert Y(t)\vert^2= -\frac{d}{dt}f(t)/f(t).\]
Namely, 
we know that 
\[h(Y(t),Y(t))=\langle \nabla_{Y(t)}Y(t), \xi\rangle=
\langle \frac{\nabla}{ds}\frac{\partial}{\partial s}\alpha(s,t),\xi\rangle\]
where $\nabla$ denotes the Levi Civita connection of $g$. 
Using the fact that $Y(t)\perp \gamma^\prime(t)$ and that $\xi=\frac{\partial}{\partial t}\alpha(s,t)$,
we compute
\begin{align*}
 \langle \frac{\nabla}{ds}\frac{\partial}{\partial s}\alpha(s,t),\xi\rangle 
 &=-\langle \frac{\partial}{\partial s}\alpha(s,t),\frac{\nabla}{d s} \frac{\partial}{\partial t}\alpha(s,t)\rangle\\
 & =-\langle \frac{\partial}{\partial s}\alpha(s,t), \frac{\nabla}{d t} \frac{\partial}{\partial s}\alpha(s,t) \rangle 
 =- \frac{1}{2} \frac{d}{dt} \vert Y(t)\vert^2
  \end{align*} 
 from which the claim follows. 
\hfill $\blacksquare$

\smallskip

Note that the Gauss curvature $K_{\rm tr}(t)$ of the totally geodesic real plane $H$ at $\gamma(t)$ equals 
\begin{equation}\label{cur1}K_{\rm tr}(t)=-\frac{d^2}{dt^2}f(t)/f(t).\end{equation}
Namely, by the Jacobi equation, this curvature equals the quantity
\[- \langle Y^{\prime\prime}(t), Y(t)\rangle/\vert Y(t)\vert^2.\] 

Following p.166 of \cite{KN69}, the curvature tensor $R_0$ of a K\"ahler manifold of 
constant holomorphic sectional curvature $-4$ can pointwise explicitly written 
only in terms of the metric
and the complex structure. Thus it is (formally) defined on any complex vector space with 
a $J$-invariant inner product. In particular, it is defined on a fiber of the bundle 
${\cal D}$. We next compare the restriction of $R$ to $\wedge^2 {\cal D}$ with $R_0$.

\smallskip
\noindent
{\bf Claim 2:} $R\vert_{\wedge^2{\cal D}}=\frac{1}{f(t)^2}(f^\prime(t)^2+1)R_0$.

\smallskip\noindent \emph{Proof of Claim 2.}
Let $A:TN(t)\to TN(t)$ be the shape
operator (or fundamental tensor) of $N(t)$ with respect to $\xi$, defined by 
\[h(X,Y)=\langle AX,Y\rangle= \langle \nabla_{X}\xi,Y\rangle\]
where as before $h$ denotes the second
fundamental form of the hypersurface $N(t)$. 
Claim 1 yields that $A\vert {\cal D}=\lambda {\rm Id}= -(\frac{d}{dt} f(t)/f(t)){\rm Id}$
and hence $h\vert {\cal D}=\lambda \langle , \rangle \vert {\cal D}$. 
Denote by $R^t$ the  curvature tensor of $N(t)$ with respect to the restriction of the metric $g$. 
If $\KN$ denotes the Kulkarni Nomizu product, then it follows from the Gauss Codazzi equations that 
we have 
\[R=R^t- \frac{1}{2} h\KN h.\]
Thus to compute the restriction of the curvature operator 
$R$ to the invariant subbundle $\wedge^2{\cal D}$ it suffices
to compute the curvature operator $R^t$ of $N(t)$.

 Put $U=J\xi$; then $U$ is the normal field to ${\cal D}$ in $TN(t)$. 
Since $g$ is K\"ahler we have 
 \[ \nabla_XU=\nabla_X(J\xi)=J(\nabla_{X}\xi)=JA(X)=PAX\]
 where $P$ is the skew-symmetric $(1,1)$-tensor field on $M$ characterized by
 \[JX=PX+\langle X,U\rangle \xi.\]
This shows that $PA\vert {\cal D}$ is the fundamental tensor of 
 the bundle ${\cal D}$ with respect to the normal field $-U$.
Note that $PU=0, P\vert {\cal D}=J\vert {\cal D}$ and $P{\cal D}={\cal D}$. 
Furthermore, we have 
\begin{equation*}
\nabla_XY-\lambda \langle PX,Y\rangle U \in {\cal D}\end{equation*}
for any sections $X,Y$ of ${\cal D}$ where as before, $\lambda$ is the principal curvature of 
${\cal D}$.
In particular, if $X,Y$ are sections of ${\cal D}$ then as $\nabla$ is torsion free, we have
\begin{equation}\label{fundtensorD}
[X,Y] =Z+2\lambda \langle JX,Y\rangle U\end{equation}
for a section $Z$ of ${\cal D}$. 

As the map $\Pi_t=\Pi\vert N(t)$ restricts to a homothety on ${\cal D}$, with scaling factor 
$f^2(t)$ with respect to the metric $g_0$ on $B_0$,  
the map $\Pi_t:N(t)\to (B_0, f(t)^2 g_0)$ is a Riemannian submersion. 
Thus the formula (\ref{fundtensorD}) 
together with O'Neill's curvature formula for
Riemannian submersions shows that we have
\begin{align*}
\langle R^t(X,Y)Z,W\rangle  &=\frac{1}{f(t)^2}\langle 
R_0(X,Y)Z,W\rangle +\lambda^2 \langle JX,Z\rangle \langle JY,W\rangle  \\ 
&-\lambda^2\langle JY,Z\rangle \langle JX,W\rangle 
+2\lambda^2
\langle JZ,W\rangle \langle JX,Y\rangle.
\end{align*}
On the other hand, we have 
\[-\frac{1}{2}h\KN h(X,Y,Z,W)=\lambda^2 \langle X,Z \rangle 
\langle X,W \rangle -\lambda^2 \langle X,W \rangle \langle Y,Z\rangle.\]
and consequently
\begin{align*}
\langle R(X,Y)Z, W\rangle = &\frac{1}{f(t)^2} \langle R_0(X,Y)Z,W\rangle + \lambda^2 \langle X,Z\rangle \langle Y,W\rangle \\
&-\lambda^2\langle X,W\rangle \langle Y,Z\rangle +\lambda^2 \langle JX,Z\rangle \langle JY,W\rangle \\
&-\lambda^2\langle JY,Z\rangle \langle JX,W\rangle +2\lambda^2 \langle JZ,W\rangle \langle JX,Y\rangle.
\end{align*}

Following p.166f of \cite{KN69}, the above equality shows that
\[\langle R(X,Y)Z,W\rangle =\frac{1}{f(t)^2}\langle R_0(Y,Y)Z,W\rangle +\lambda^2 q\]
where $q$ is the curvature tensor of the complex hyperbolic space with holomorphic 
sectional curvature $-4$. 
As a consequence, 
the restriction of $R$ to $\wedge^2 {\cal D}$ equals 
\begin{equation}\label{curvaturemetric}
R\,\vert_{\wedge^2{\cal D}}=(\lambda^2 +\frac{1}{f^2(t)}) R_0\,\vert_{\wedge^2{\cal D}} .\end{equation}
As $\lambda=f^\prime(t)/f(t)$, we obtain that the multiplicity is 
given by 
\begin{equation*}
\frac{1}{f(t)^2}(f^\prime(t)^2+1)\end{equation*} 
which completes the proof of the claim.
\hfill$\blacksquare$

By the above computation, 
the value of the Ricci tensor ${\rm Ric}$ on a unit tangent vector $X\in {\cal D}$
equals 
\begin{equation}\label{odegeneral}
-2f^{\prime\prime}(t)/f(t)- 2n\frac{1}{f^2(t)}(f^\prime(t)^2 +1)\end{equation}
since $-2n$ is the Ricci curvature of $B_0$ and $-f^{\prime\prime}/f$ is the Gauss curvature 
of a totally real plane in $\Omega_\alpha$. Here we use that the metric 
$g=\langle ,\rangle$ is K\"ahler
and hence if we denote again by $\xi$ the outer normal field of $N(t)$, then we have 
\[\langle R(X,\xi)\xi,X\rangle =\langle R(JX,J\xi)J\xi,JX\rangle=
\langle R(X,J\xi)J\xi,X\rangle\]
where the last equality follows from ${\rm U}(n-1)$-equivariance of $R$ and the invariance of 
${\cal D}$ under the complex structure $J$.

The above computations are valid for any K\"ahler metric on $\Omega_\alpha$ which 
is invariant under the group $G$ and complex conjugation. In particular, it also holds true
for the Bergman metric on $\Omega_\alpha$. Let us now assume in addition that the metric
$g$ is Kähler--Einstein, with Einstein constant $-(2n+2)$. 
Then the value of the Ricci tensor of $g$, applied to a unit tangent vector in ${\cal D}$, equals
$-(2n+2)$. Inserting this value into the equation (\ref{odegeneral}) is equivalent to the 
differential equation (\ref{ode}) for the function $f=f_\alpha$ stated in the theorem. 

As a consequence, we obtain that the growth function $f=f_\alpha$ for the invariant Kähler--Einstein metric on 
$\Omega_\alpha$ is a solution of the equation (\ref{ode}).
This completes the establishment of the differential equation (\ref{ode}) for $f_\alpha$. 

We are left with showing that the initial condition for the solution $f_\alpha$ of the 
differential equation (\ref{ode}) which determines
the metric on $\Omega_\alpha$ for $\alpha \geq 1$ is a 
condition $f_\alpha(0)\in (\sqrt{\frac{n}{n+1}},1]$ and $f_\alpha^\prime(0)=0$, and that
the map $\alpha\to f_\alpha(0)$ is a decreasing homeomorphism 
$[1,\infty)\to (\sqrt{\frac{n}{n+1}},1]$.

By invariance of the metric under the reflection in the $z_1$-coordinate,
we know that
$f_\alpha^\prime(0)=0$.

Observe 
that the metric $g_\alpha$ on $\Omega_\alpha$ is completely determined by the function $f_\alpha$. 
Namely, $f_\alpha(0)$ determines the restriction of $g_\alpha$ to the divisor $B_0$. 
Furthermore, let us consider a standard totally real plane $H$ containing a geodesic line $\eta$ in 
$B_0$ through $0$. This plane is foliated by geodesics orthogonal to $\eta$, parameterized by
arc length with respect to the metric $g_\alpha$. The function $f_\alpha$ completely determines the metric 
on $H$ in these coordinates as it determines the length of the tangent vectors orthogonal 
to the tangents of these geodesics. In particular, 
it computes for every $t>0$ the metric on the 
$J$-invariant subbundle ${\cal D}$ of the tangent bundle of the 
real hypersurface $G\gamma(t)$ as a multiple of the pull-back of the metric on $B_0$ under the 
natural projection.

Now viewing the disk $D$ as the $S^1$-orbit of the geodesic $\gamma$ in $H$ through $0$ which is orthogonal
to $B_0$, we know that we can also recover 
the restriction of the metric $g_\alpha$ to the disk $D$ 
by knowing the curvature of the metric and hence the growth of the lengths of the $S^1$-orbits.


As a consequence, if $\alpha\not=\beta$ but $f_\alpha(0)=f_\beta(0)$ then there exists an 
$G$-equivariant isometry 
$(\Omega_\alpha,g_\alpha)\to (\Omega_\beta,g_\beta)$ whose restriction to the disk $D$ is a biholomorphic map. 
By equivariance under the action of the group $G$ this isometry commutes with the complex structure 
and hence is a biholomorphic map. By Corollary 1 of \cite{AS83}, this is impossible. 

As a consequence, the map 
$\alpha\to f_\alpha(0)$ is injective. 
As $f_1$ defines the metric on the ball, 
to complete the 
proof of the theorem 
is suffices to show the following statement.

\smallskip\noindent
{\bf Claim 3.} The map $\alpha\mapsto f_\alpha(0)$ is continuous, and $f_\alpha(0)\to \sqrt{\frac{n}{n+1}}$ as $\alpha\to \infty$.

\smallskip\noindent 
\emph{Proof of Claim 3.}
%
%
%
Put $\Omega_\infty=D\times B_0$ where $D\subset \mathbb{C}$ is the standard unit disk.
For $1\leq \alpha \leq \beta\leq \infty$ let 
\[\iota_{\alpha,\beta}:
\Omega_\alpha\to \Omega_\beta\] 
be the natural $G$-equivariant inclusion. 

Denote as before by $g_\alpha$ the Kähler--Einstein
metric on $\Omega_\alpha$ with Einstein constant $-(2n+2)$. Let $\omega_\alpha$ be
the K\"ahler form associated to $g_\alpha$. Since $\omega_\alpha$ is K\"ahler-Einstein, one can find a potential $\phi_\alpha$ for the metric 
(that is,  $\omega_\alpha= dd^c \phi_\alpha$) such that  
\begin{equation}
\label{KE}
\omega_\alpha^n=e^{2(n+1)\phi_\alpha}\omega_{\mathbb{C}^n}^n,
\end{equation}
where $\omega_{\mathbb{C}^n}$ is the standard euclidean metric.

We next derive some uniform estimates for $\omega_\beta$ as $\beta$ ranges in $[1,+\infty]$. 
First, since $\omega_\beta$ is K\"ahler-Einstein, of negative Ricci curvature,  
Theorem 3 of \cite{Y78} shows that there is a universal constant $C>0$ so that
\begin{equation}
\label{vol form}    
\iota_{\alpha,\infty}^*\omega_\infty^n \leq C\iota_{\alpha,\beta}^*\omega_\beta^n \leq C^2\omega_\alpha^n
\end{equation}
holds on $\Omega_\alpha$ for any $\beta \geq \alpha$. 

The K\"ahler-Einstein metric $\omega_\infty$ on $D\times B_0$ 
is just the product of suitably scaled complex hyperbolic metrics on each factor 
and hence it has negative holomorphic sectional curvature. Therefore, Theorem 1 of \cite{Roy80} shows that there is a constant $c>0$ independent of $\beta$ such that 
\begin{equation}
\label{comparison 1} 
 c \, \iota_{\beta, \infty}^* \omega_\infty \leq  \omega_\beta
\end{equation}
holds for any $\beta\ge 1$.

Finally, if $\omega_B$ denotes the complex hyperbolic metric on the unit ball $B\subset\mathbb C^n$, then Theorem 2 of \cite{Y78} shows that there is a constant $c'>0$ independent of $\beta$ such that 
\begin{equation}
\label{comparison 2} 
 c' \, \Phi_{\beta}^* \omega_B \leq \omega_\beta 
\end{equation}
where $\Phi_\beta:\Omega_\beta\to B$ is the holomorphic map defined in the 
beginning of this section. Strictly speaking,
$\Phi_\beta$ is multivalued when $\beta$ is not an integer,  but $\Phi_{\beta}^* \omega_B$ is well-defined.
Therefore, pointwise computations can be done by choosing a local branch and one can then apply the maximum principle just as in \cite{Y78}.\\

As a consequence of \eqref{vol form} and \eqref{comparison 1}, the following holds true. 
Let $K \subset \Omega_\alpha$ be a compact set and let 
$\epsilon >0$ be sufficiently small that for $\vert \alpha-\beta\vert <\epsilon$, we have
$K\subset \Omega_\beta$. Then for $\vert \alpha -\beta \vert \leq \epsilon/2$, there is a constant $C_K$ independent of $\beta$ such that 
\[C_K^{-1}\omega_{\mathbb C^n}\leq\omega_\beta \leq C_K \omega_{\mathbb C^n} \quad \mbox{on } K.\]

Given the complex Monge-Amp\`ere equation \eqref{KE}, a standard bootstrapping argument yields that if $\beta_i\to \alpha$ is any convergent sequence,
then by passing to a subsequence, we may assume that the K\"ahler metrics 
$\omega_{\beta_i}$ converge uniformly on $K$ to a K\"ahler metric $\hat \omega$
on $K$. This metric then is K\"ahler-Einstein, with constant $-2(n+1)$.
As $\Phi_\alpha$ depends in an analytic fashion on $\alpha$, we have 
$\Phi_{\beta_i}^*\omega_B\vert_K\to \Phi_\alpha^*\omega_B\vert_K$ and hence 
\eqref{comparison 2} shows that $\hat\omega \geq c' \, \Phi_{\alpha}^* \omega_B$. 
As $K\subset \Omega_\alpha$ was arbitary, using a diagonal sequence we deduce
that $\hat \omega$ is defined on all of $\Omega_\alpha$.
Since $\Phi_\alpha$ is proper, this implies that $\hat \omega$ is complete. 
Theorem \ref{chengyau} then yields 
that $\hat \omega=\omega_\alpha$. In particular, the assignment $\alpha\mapsto \omega_\alpha(0)$ is continuous with respect to the usual topology on $[1,+\infty]$ and $\Lambda^{2}\mathbb R^{2n}$, respectively. 

As $f_\alpha(0)$ determines the scaling
factor of the restriction of $g_\alpha$ to $B_0$ with respect to the Kähler--Einstein metric on $B_0$ with constant $-2(n+1)$, we conclude that the map $\alpha\mapsto f_\alpha(0)$ is continuous. 
This continuity is also valid for $\alpha=1$, which corresponds to the ball, and 
for $\alpha=\infty$ which corresponds to the product $D\times B_0$.
As $f_1(0)=1$ and $f_\infty(0)=\sqrt{\frac{n}{n+1}}$ (the latter value describing
the product Kähler--Einstein metric), injectivity of the assignment 
$\alpha\mapsto f_\alpha(0)$ yields that $f_\alpha(0)\in (\sqrt{\frac{n}{n+1}},1]$ for all 
$\alpha \geq 1$. 
This completes the proof of the claim.
\hfill $\blacksquare$
\end{proof}

\begin{remark}
\label{3 and 3'}
Claim 2 in the proof of Theorem \ref{thm:ode}, which is valid for any $G$-invariant
K\"ahler metric,  
implies the equivalence of assumption (3) in Corollary \ref{negative} and 
condition ($3^\prime$) stated in its proof and hence completes the proof of Corollary \ref{negative}.
\end{remark}

\subsection{The curvature of the Kähler--Einstein metric}\label{sec:curvature}

The goal of this section is to analyze the solutions of 
the differential equation (\ref{ode}) and use it to control the  curvature 
of the Kähler--Einstein metric $g_\alpha$ on the domain
$\Omega_\alpha$ $(\alpha <\infty)$ 
with Einstein constant $-(2n+2)$. 
The following theorem summarizes the relevant curvature properties.

\begin{theorem}\label{negativeonreal}
Let $g_\alpha$ be the invariant Kähler--Einstein metric on 
the domain $\Omega_\alpha\subset \mathbb{C}^n$ 
Then the following holds true.
\begin{enumerate}
\item 
The sectional curvature of a standard totally real
plane $H\subset \Omega_\alpha$ is negative and bounded from below by $-1$. 
\item The sectional curvature $K_{\alpha}$ of the complex disk $D$ is negative and 
contained in the interval $(-2n-2,-4]$. For every $\epsilon >0$ 
there exists a number $C=C(\alpha,\epsilon)>0$
such that $\big|K_\alpha+4\big|\leq Ce^{-(1-\epsilon)\,d(0,\cdot)}$. 
\item The sectional curvature is bounded from above by a negative constant, and bounded 
from below by $-2n-2$. 
\end{enumerate}
\end{theorem}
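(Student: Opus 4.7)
The plan hinges on discovering a first integral of the ODE (\ref{ode}) and reading all curvature data off it. Multiplying (\ref{ode}) by $2f'f^{2n+1}$ rewrites it as the exact derivative
\[
\frac{d}{dt}\bigl[f^{2n}((f')^2+1-f^2)\bigr]=0,
\]
so $f^{2n}((f')^2+1-f^2)\equiv C$, where the initial data $f'(0)=0$, $f(0)=f_0$ give $C=f_0^{2n}(1-f_0^2)\geq 0$. Combined with the ODE, this yields the two clean identities
\[
\frac{f''}{f}=1-\frac{nC}{f^{2n+2}},\qquad \frac{(f')^2+1}{f^2}=1+\frac{C}{f^{2n+2}}.
\]
Since the initial condition $f_0^2>n/(n+1)$ reads $nC/f_0^{2n+2}<1$ and $f$ is strictly increasing for $t>0$, we have $f''/f\in(0,1]$ throughout. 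By (\ref{cur1}), the Gauss curvature of the standard totally real plane is $-f''/f\in[-1,0)$, proving (1).

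For (2), a key observation is that the Jacobi field $Y$ constructed in the proof of Theorem \ref{thm:ode} is both tangent to the totally real plane $H$ and lies in $\mathcal{D}|_\gamma=T_{\gamma(t)}D^\perp$; hence the tangent plane to $H$ at $\gamma(t)$ is spanned by $\gamma'\in T_zD$ and $Y/|Y|\in T_zD^\perp$. By Lemma \ref{representation}(2) the $A_2$-eigenvalue $\mu(t)$ of $R$ must equal this plane's sectional curvature, so $\mu(t)=-f''/f$. Expanding $\mathrm{Ric}(\gamma',\gamma')$ against the orthonormal basis $\{\gamma',J\gamma',e_3,\ldots,e_{2n}\}$ with $\{e_i\}_{i\geq 3}$ a basis of $T_zD^\perp$ and using Lemma \ref{representation}(2) to identify each $R(\gamma',e_i,e_i,\gamma')=\mu(t)$, the Einstein condition gives
\[
-(2n+2)=\mathrm{Ric}(\gamma',\gamma')=K_\alpha+(2n-2)\mu(t),
\]
hence $K_\alpha=-4-(2n-2)nC/f^{2n+2}\in[-2n-2,-4]$. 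For the exponential decay, the first integral gives $(f')^2\geq f^2-1\geq\tfrac34 f^2$ once $f\geq 2$, so $f$ grows at least like $e^{ct}$ for some $c>0$; thus $|K_\alpha+4|=(2n-2)nC/f^{2n+2}$ decays exponentially in $t$ at a rate well above $1-\epsilon$.

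For (3), the proof of Corollary \ref{negative} together with Lemma \ref{representation} shows that the sectional curvature of any plane spanned by $u_1=v_1+w_1,u_2=v_2+w_2$ (with $v_i\in T_zD$, $w_i\in T_zD^\perp$) is a convex combination
\[
K(u_1,u_2)=\lambda_1 K_\alpha+\lambda_2\mu(t)+\lambda_3\sigma
\]
of the three eigenvalues on $A_1,A_2,A_3$, where $\sigma$ is the sectional curvature of the $A_3$-component $w_1\wedge w_2$ and the $\lambda_i\geq 0$ sum to $1$ (orthogonality of $A_1,A_2,A_3$ in $\wedge^2 T_z\Omega_\alpha$). By Claim 2 of Theorem \ref{thm:ode}, $R|_{A_3}=(1+C/f^{2n+2})R_0$, so $\sigma\in[-4(1+C/f^{2n+2}),-(1+C/f^{2n+2})]\subset[-4/f_0^2,-1]$. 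Combining gives the upper bound $K(u_1,u_2)\leq \max(K_\alpha,\mu(0),-1)=\mu(0)=(n-(n+1)f_0^2)/f_0^2<0$, a negative constant depending on $\alpha$. For the lower bound, $K_\alpha\geq -2n-2$, $\mu(t)\geq -1$, and $\sigma\geq -4/f_0^2\geq -4(n+1)/n\geq -(2n+2)$ for $n\geq 2$, so $K(u_1,u_2)\geq -(2n+2)$.

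The main obstacle is spotting the conservation law; once found, all curvature information flows from algebraic manipulations of the single scalar function $f$. The pivotal geometric input is the identification $\mu(t)=-f''/f$, which lets a Ricci decomposition convert the identity for $f''/f$ directly into a formula for $K_\alpha$.
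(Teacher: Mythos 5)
Your argument is correct, and it takes a genuinely different route from the paper's. The paper proves the curvature estimates by a sequence of qualitative ODE comparison arguments (its Claims~1--5): convexity of $\log f$, monotonicity of $f'/f$ and $f''/f$, limits as $t\to\infty$, and a separate asymptotic estimate. You instead observed that the ODE (\ref{ode}) admits the first integral $f^{2n}\bigl((f')^2+1-f^2\bigr)\equiv C=f_0^{2n}(1-f_0^2)\ge 0$, which turns every relevant curvature quantity into an explicit algebraic function of $f$: $K_{\rm tr}=-f''/f=-1+nC/f^{2n+2}$, $\;((f')^2+1)/f^2=1+C/f^{2n+2}$, and (via the Ricci contraction $-(2n+2)=K_\alpha+(2n-2)\mu(t)$, which is the paper's identity (\ref{Kalpha})) $K_\alpha=-4-(2n-2)nC/f^{2n+2}$. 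This is cleaner: the initial condition $f_0^2>n/(n+1)$ translates directly into $nC/f_0^{2n+2}<1$, the monotonicity of all curvature quantities follows at once from $f$ being increasing, and the exponential decay is immediate from $(f')^2=f^2-1+C/f^{2n}$, in fact giving a faster rate $O(e^{-(2n+2)(1-\epsilon)t})$ than the paper's $O(e^{-(1-\epsilon)t})$. Your identification $\mu(t)=-f''/f$ for the $A_2$-eigenvalue is correct (the totally real plane $H$ has tangent space spanned by $\gamma'\in T_zD$ and $Y\in T_zD^\perp$, and $H$ is totally geodesic, so its Gauss curvature equals the ambient sectional curvature of that plane), and it is the same identification the paper makes implicitly in deriving (\ref{odegeneral}) and (\ref{Kalpha}). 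The convex-combination argument in part (3) is the same as the paper's Corollary~\ref{negative}; to be precise one should maximize over $t$ as well as over the $\lambda_i$, but since $K_\alpha(t)$, $\mu(t)$, and the $A_3$ eigenvalue all sit between $-2n-2$ and $\mu(0)=n/f_0^2-(n+1)<0$, the conclusion is as you state. In short: same geometric reductions (Lemma~\ref{representation}, Claim~2 of Theorem~\ref{thm:ode}, the Einstein contraction), but your conservation law replaces the paper's soft ODE analysis with closed-form expressions, which is both more transparent and quantitatively sharper.
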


\begin{proof}[Proof of Theorem \ref{negativeonreal}] For convenience, we drop the index $\alpha$ from the notation.
By the first part of Theorem \ref{thm:ode}, we know that the invariant Kähler--Einstein metric $g=g_\alpha$
on $\Omega_\alpha$ determines a solution $f=f_\alpha$ of the differential equation (\ref{ode}) with initial condition 
$f(0)\in (\sqrt{\frac{n}{n+1}},1]$ and
$f^\prime(0)=0$. 
It is a direct consequence of the equation that we have 
$f^{\prime\prime}(0)>0$ and hence $f^\prime(t)>0$ for $t>0$ sufficiently close to $0$.
We divide the argument
into six claims. 

\smallskip\noindent
{\bf Claim 1:} The function $\log f$ is convex, that is, $\frac{d}{dt}\frac{f^\prime}{f}=\frac{d^2}{dt^2}\log f=
\frac{f^{\prime\prime}}{f}-(\frac{f^\prime}{f})^2\geq 0$.

\smallskip \noindent
\emph{Proof of Claim 1.}
The inequality clearly holds true for $t=0$. Assume to the contrary 
that there exists a smallest number $\tau >0$ so that 
$(f^{\prime\prime}/f-({f^\prime}/{f})^2 )(\tau)=0$ and that this quantity is negative for 
$t\in (\tau,\tau+\delta)$ for some small $\delta >0$. 
This means that the value of $f^\prime/f$ is strictly decreasing on $(\tau,\sigma)$
for some $\sigma\in (\tau,\tau+\delta)$. 

Since $f^\prime/f$ is non-decreasing on $[0,\tau]$, and $f^\prime(t)>0$ for sufficiently small $t>0$, 
by possibly decreasing $\delta$ we may assume that
$f^\prime >0$ on $(\tau-\delta,\tau+\delta)$.
Then $(f^\prime)^2/f^2$ is also strictly 
decreasing on $(\tau,\sigma)$
and hence $n+1-(n+1)\frac{(f^\prime)^2}{f^2} -\frac{1}{f^2}$ is strictly increasing on 
$(\tau,\tau +\delta)$. 
Inserting into the equation (\ref{ode}) yields a contradiction which shows the claim.
\hfill$\blacksquare$

As a consequence of Claim 1, we have 
$f^{\prime\prime}(t)>0$ for all $t$. In particular it holds 
$\frac{f^{\prime\prime}}{f}(t)>0$ for all $t$. Moreover, $f^\prime$ is strictly
increasing in $t$ and hence $f^\prime>0$ on $(0,\infty)$, which yields that 
$f$ is strictly increasing on $(0,\infty)$ as well.

%
%


As the function $f^\prime/f$ is non-decreasing, 
we can ask for its limit as $t\to \infty$. 

\smallskip
\noindent{\bf Claim 2:} It holds $f^\prime/f\to 1$ as $t\to \infty$. 

\smallskip \noindent
\emph{Proof of Claim 2.} Inserting the inequality of Claim 1 into 
the differential equation (\ref{ode}) yields that
$f^\prime/f<1$ on $[0,\infty)$ and hence 
$\lim_{t\to \infty}(f^\prime/f)(t)=a\in (0,1]$. 
As $f^{\prime\prime}>0$, we have $f(t)\to \infty$ $(t\to \infty)$.
Thus if $a<1$ then the equation (\ref{ode}) shows that for all sufficiently large 
$t>0$ we have $\frac{f^{\prime\prime}}{f}>1+\epsilon$ for $\epsilon=n(1-a/2)>0$. 
But then for large $t$ the quantity $\frac{d}{dt}\frac{f^\prime}{f}(t)$ is bounded from below
by a universal positive constant which contradicts the fact that $f^\prime /f<1$.
\hfill$\blacksquare$

\smallskip

Let now $f_1(t)=\cosh(t)$ be the solution of the equation 
(\ref{ode}) with initial condition $f_1(0)=1$ and $f_1^\prime(0)=0$. Assume that $\alpha\not=1$, that is, 
$f(0)<1=f_1(0)$.

\smallskip\noindent
{\bf Claim 3:} We have $f(t)<f_1(t)$ and $(f^\prime/f)(t)<(f_1^\prime/f_1)(t)$ for all $t>0$.

\smallskip\noindent
\emph{Proof of Claim 3.}
Assume to the contrary that there is a first $\tau >0$ so that 
$f(\tau)=f_1(\tau)$. Since $\log $ is a monotone function and $f,f_1$ are positive, we then we have 
$\frac{d}{dt}\log f (\tau)\geq \frac{d}{dt}\log f_1(\tau)$, that is, 
$(f^\prime/f)(\tau)\geq (f_1^\prime/f_1)(\tau)$. But if equality holds then 
$f^\prime(\tau)=f_1^\prime(\tau)$ and hence the initial conditions at $\tau$ of the solutions 
$f,f_1$ of the equation (\ref{ode}) coincide. Then 
$f=f_1$ which is impossible. So we have 
$(f^\prime/f- f_1/f_1)(\tau)>0$.  

The equation (\ref{ode})  shows that $f^{\prime\prime}(\tau)<f_1^{\prime\prime}(\tau)$
and hence $\frac{d}{dt}(\frac{f^\prime}{f}- \frac{f_1^\prime}{f_1})\vert_{t=\tau}<0$. 
Thus the function $f^\prime/f-f_1^\prime/f_1$ is decreasing near $\tau$.
On the other hand, the initial conditions for $f,f_1$ at $t=0$ imply that
$f^\prime/f-f_1^\prime/f_1$ is also decreasing near $0$. As its value at $0$ equals zero
and its value at $\tau$ is positive,  the intermediate
value theorem yields that there 
is some smallest $\sigma \in (0,\tau]$ with $f^\prime/f(\sigma)=f_1^\prime/f_1(\sigma)$.
Since $f^\prime/f-f_1^\prime/f_1$ is decreasing near $\tau$, we have 
$\sigma <\tau$ and hence $f(\sigma)<f_1(\sigma)$ by the choice of $\tau$. 

Insertion of this inequality into (\ref{ode}) yields 
$(f^{\prime\prime}/f)(\sigma) <(f_1^{\prime\prime}/f_1)(\sigma)$ and hence
$f^\prime/f-f_1^\prime/f_1$ is decreasing near $\sigma$. This is a contradiction to the choice of 
$\sigma$. 
Together we obtain that $f(t)<f_1(t)$ for all $t$ and also $f^\prime/f<f_1^\prime/f_1$. 
\hfill$\blacksquare$

\smallskip\noindent
{\bf Claim 4:} The function $t\to \frac{f^{\prime\prime}}{f}(t)$ is 
strictly increasing, and $\frac{f^{\prime\prime}}{f}(t)\to 1$ as $t\to \infty$.

\smallskip\noindent
\emph{Proof of Claim 4.}
The equation (\ref{ode}) shows that
\[\frac{f^{\prime\prime}}{f}=n+1-n(\frac{f^\prime}{f})^2-\frac{n}{f^2}.\]
Differentiating this equations yields
\begin{equation}\label{derivativeofcurv}
\frac{d}{dt} (\frac{f^{\prime\prime}}{f})=
-2n(\frac{d}{dt} \frac{f^\prime}{f})(\frac{f^\prime}{f})+\frac{2n}{f^2}(\frac{f^\prime}{f}).
\end{equation}
Inserting the initial condition for $f$ shows that the right hand side of 
equation (\ref{derivativeofcurv})
vanishes at $t=0$. In view of $\frac{1}{f^2(0)}>1$, 
dividing by $\frac{f^\prime}{f}$, which is positive for all $t>0$ 
by Claim 1,  
and taking the limit as $t\searrow 0$ yields
that the right hand side of (\ref{derivativeofcurv}) is positive for 
small $t> 0$. 

We use equation (\ref{derivativeofcurv}) 
to study the critical points of $\frac {f^{\prime\prime}}{f}$. 
Let $\tau >0$ be a first positive critical point. Since $\frac{f^\prime}{f}(\tau)>0$, 
equation (\ref{derivativeofcurv}) yields
that 
\[-2n (\frac{f^{\prime\prime}}{f}(\tau)-(\frac{f^\prime}{f})^2(\tau))+\frac{2n}{f^2}(\tau)=0\]
and hence 
\[\frac{1}{f^2}(\tau)=\frac{f^{\prime\prime}}{f}(\tau)-(\frac{f^\prime}{f})^2(\tau).\]
Insertion of the expression for $\frac{1}{f^2}(\tau)$ into the differential equation 
(\ref{ode}) shows that $\frac{f^{\prime\prime}}{f}(\tau)=1$. 

Now by Claim 1 and Claim 2, we have $\frac{f^{\prime\prime}}{f}\geq (\frac{f^\prime}{f})^2$, 
moreover $\frac{f^\prime}{f}$ is increasing and converges to $1$ as $t\to \infty$. 
Using once more equation (\ref{ode}), we also have $\frac{f^{\prime\prime}}{f}\to 1$ $(t\to \infty)$. 
Thus if there exists a number $t>0$ so that $\frac{f^{\prime\prime}}{f}(t)>1$, then 
the function $\frac{f^{\prime\prime}}{f}$ assumes a global maximun at a number $t>0$ with 
$\frac{f^{\prime\prime}}{f}(t)>1$. But then $t$ is a critical point for 
$\frac{f^{\prime\prime}}{f}$ violating that by the above computation, its value at
every critical point is one. 

We conclude that $\frac{f^{\prime\prime}}{f}(t)\leq 1$ for all $t$, moreover the 
only critical points in $(0,\infty)$ are global maxima with functional value one. As $\frac{f^{\prime\prime}}{f}(t)\to 1$
$(t\to \infty)$, we deduce that 
the function $\frac{f^{\prime\prime}}{f}$ 
is non-decreasing. Since it also is analytic, it can not assume the 
value one as this would imply that the function is constant. 
Hence $\frac{f^{\prime\prime}}{f}$ 
is strictly increasing as predicted in the claim.
\hfill$\blacksquare$

%

We can now use what we established to give an explicit description of the curvature of the 
metric $g_\alpha$. To this end 
we need to control the Gaussian curvature $K_\alpha(t)$ of the totally geodesic holomorphic disk $D$, the  curvature $K_{\rm tr}(t)$ of a totally real plane and the curvature of the planes in $\mathcal D$. We have 
\[K_{\rm tr}(t)=-\frac{f^{\prime\prime}}{f}\]
by \eqref{cur1} so that Claim 4 yields
\begin{equation}
    \label{Ktr} 
    -1 \le K_{\rm tr} \le -(n+1)+\frac{n}{f(0)^2}.
\end{equation}
By Claim 2 from the proof of Theorem \ref{thm:ode}, we have 
\[R|_{\Lambda^2\mathcal D}= \frac{1}{f^2}((f^\prime)^2+1) R_0\]  
where the function $\frac{1}{f^2}((f^\prime)^2+1)=-\frac 1n \frac{f''}{f}+\frac{n+1}{n}$ is decreasing by Claim 4. 
Recall that the sectional curvature of 
the metric $g_0$ on the ball $B_0$ is contained in the interval
$[-4,-1]$ since $g_0$ has holomorphic sectional curvature $-4$. This implies that for any plane $P\subset \mathcal D$, it holds
\begin{equation}
\label{K hor}
-\frac{4}{f(0)^2}\le K(P)\le -1.
\end{equation} 
Finally, since $\Ric g=-2(n+1)g$, we have that 
\begin{equation}
    \label{Kalpha}
    K_\alpha=-2(n+1)-2(n-1)K_{\rm tr},
\end{equation}
hence
\begin{equation}
    \label{KD}
    -2(n+1)\le K_\alpha\le -4.
\end{equation}

By Lemma~\ref{negative}, it follows that the curvature of $g$ is negative, and the first three items of the theorem are proved. Moreover, 
Claim 4 implies that $K_{\rm tr}(t)\to -1$ and $K_\alpha(t)\to -4$ as $t\to \infty$. Finally, we see that the supremum of the sectional curvature is attained by the totally real planes at a point of $B_0$. That is, 
\begin{equation}
    \label{sup K}
    \sup_{z\in \Omega_\alpha}\sup_{\substack{P\subset  T_z\Omega_\alpha\\ \mathrm{plane}}} K_{g_\alpha}(P)= -(n+1)+\frac{n}{f_\alpha(0)^2}. 
\end{equation}
Note that as $\alpha\to +\infty$, the right hand side increases to $0$.\\
 
The following computation yields that the convergence of the curvature tensor
to the curvature tensor of a metric on the ball is exponential in $t$.

\smallskip\noindent
{\bf Claim 5:} For $\epsilon >0$ there exists a number 
$C=C(f,\epsilon)>0$ such that
\[ \left| \frac{f^\prime(t)}{f(t)}-1\right|+\left| \frac{f^{\prime\prime}(t)}{f(t)}-1\right| \leq Ce^{-(1-\epsilon)t}.\]
Moreover, for any integer $k\ge 1$, there is a constant $C_k=C(k,f,\epsilon)$ such that 
\begin{equation}
    \label{derivatives estimates}
\Big|\frac{d^k}{dt^k}\Big(\frac{f''}{f}\Big)\Big| \le C_k e^{-(1-\epsilon)t}.
\end{equation}

\smallskip\noindent
\emph{Proof of Claim 5.}
We know that $\frac{f^\prime}{f}\leq \frac{f^{\prime\prime}}{f} \leq 1$
for all $t$. On the other hand, we also have
$\frac{f^{\prime\prime}(t)}{f(t)}+ n\frac{(f^\prime(t))^2}{f(t)^2}\geq n+1-n\frac{1}{f^2}$.
For large $t$, $\frac{d}{dt}\log f(t)\geq 1-\epsilon$
and hence  $n/f^2(t)\leq e^{-(1-\epsilon)t}$. 
From this we get $\frac{f^\prime(t)}{f(t)}-1\ge Ce^{-(1-\epsilon)t}$. The remaining inequality for $\frac{f^{\prime\prime}(t)}{f(t)}-1$ is a consequence of the previous one and \eqref{ode}.  

As for the last statement of the claim, let us set $u=\log f$. We have $u''=\frac{f''}{f}-(\frac{f'}{f})^2= \frac{f''}{f}-u'^2$ and $u''+(n+1)(u'^2-1)+ne^{-2u}=0$ thanks to \eqref{ode}. From the previous steps, we know that $e^{-u}+|u'-1|+u''= O(e^{-(1-\epsilon)t})$. An elementary induction based on the ODE for $u$ and the previous estimates shows that $u^{(k)}$ converges to zero exponentially fast for any $k\ge 2$. Since $u''= \frac{f''}{f}-u'^2$, the same goes for the derivatives of $\frac{f''}{f}$, hence the claim. 
\hfill$\blacksquare$

From Claim 5, \eqref{cur1} and \eqref{Kalpha}, one deduces the second item in the Theorem, which concludes its proof.
\end{proof}

\begin{remark}
J.F. Lafont and B. Minemyer \cite{LM25} made independent computations to analyze 
(real) Einstein metrics on $\Omega_\alpha$. Combined with our results, their work leads to 
an explicit solution of the differential equation (\ref{ode}) with respect to the 
initial conditions $f(0)\in (\sqrt{\frac{n}{n+1}},1], f^\prime(0)=0$.  
\end{remark}

\subsection{Comparison with the pull-back of the ball metric}

In Theorem \ref{negativeonreal}, we established a precise curvature control for the 
Kähler--Einstein metric $g_\alpha$ on $\Omega_\alpha$. In particular, it 
follows from its second part that the curvature tensor of $g_\alpha$ converges 
exponentially with the distance from the divisor $B_0$ to the curvature tensor of 
a metric of constant holomorphic sectional curvature and the same Einstein constant. 
In this section it will be convenient to normalize this constant to be $\frac{1}{2}(n+1)$ so that 
the holomorphic sectional curvature for the metric on the ball equals $-1$.

The pull-back $\Phi_\alpha^* g_1$ by $\Phi_\alpha$ of the metric $g_1$ on the ball $B$ 
is a metric of constant holomorphic sectional curvature $-1$ on $\Omega_\alpha \setminus B_0$. 
The goal of this section is to compare the metrics $g_\alpha$ and 
$\Phi_\alpha^*g_1$ as the distance $d_\alpha(B_0,\cdot)$ from 
$B_0$, measured with respect to the distance function $d_\alpha$ of $g_\alpha$, 
tends to infinity. Our findings are summarized in the following result. 

\begin{theorem}\label{thm:comparison}
For $k\geq 0$ 
there exist numbers $a(\alpha,k)>0,C(\alpha,k)>0$ 
such that 
the metrics $g_\alpha$ and $\Phi_\alpha^*g_1$ satisfy 
$\Vert g_\alpha-\Phi_\alpha^*g_1\Vert_{C^k} \leq C(\alpha,k)e^{-a(\alpha,k) \,d_\alpha(\cdot,B_0)}$ on 
the complement of the tubular neighborhood of radius one about $B_0$. 
\end{theorem}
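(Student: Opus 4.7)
The plan is to exploit the observation that $\Phi_\alpha^* g_1$ is itself a smooth $U(n-1,1)$-invariant K\"ahler--Einstein metric on $\Omega_\alpha\setminus B_0$ with the same Einstein constant as $g_\alpha$. The entire framework of Theorem~\ref{thm:ode} therefore applies to both metrics simultaneously, and Theorem~\ref{thm:comparison} reduces to comparing two solutions of the ODE~(\ref{ode}) which share the same asymptotic behavior at infinity. Concretely, since $\Phi_\alpha$ is, away from $B_0$, a local isometry onto $(B,g_1)$ whose profile function is $\cosh$, the profile function $\tilde f$ of $\Phi_\alpha^* g_1$, parametrized by the $\Phi_\alpha^* g_1$-distance $s$ to $B_0$, simply equals $\cosh(s)$.

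Starting from Claim~5 in the proof of Theorem~\ref{negativeonreal}, which already gives $(f_\alpha'/f_\alpha)(t)-1=O(e^{-(1-\epsilon)t})$, I would linearize (\ref{ode}) at its asymptotic attractor $f'/f=1$ and iterate the resulting decay through the ODE itself to obtain, along a transverse geodesic, a shift $\tau_\alpha\in\mathbb{R}$ and constants $C_k,a_k>0$ such that
\[
\bigl|f_\alpha^{(k)}(t)-\cosh^{(k)}(t+\tau_\alpha)\bigr|\le C_k\,e^{-a_k t}\qquad (k\ge 0).
\]
The same analysis, applied to the companion profile $h$ (determined from $f$ via the Gauss relation $-h''/h=-2(n+1)+2(n-1)f''/f$ coming from (\ref{Kalpha})) and to the reparametrization $s=s(t)$ between the two distances to $B_0$ (which is the integral of the exponentially small difference of transverse conformal factors), yields $s(t)-(t+\tau_\alpha)\to 0$ exponentially with all derivatives.

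Writing both metrics in Fermi coordinates around a basepoint of $B_0$, the Riemannian submersion structure of Section~\ref{sec model KE} puts them in the common form
\[
dt^2 + h(t)^2\,\sigma\otimes\sigma + f(t)^2\,\pi^* g_{B_0},
\]
so the exponential estimates above immediately give the $C^0$ bound claimed in the theorem. To upgrade to $C^k$, one uses that on the complement of the unit tubular neighborhood of $B_0$ both metrics have uniformly bounded geometry and satisfy the complex Monge--Amp\`ere equation $\omega^n = e^{c\phi}\omega_{\mathbb{C}^n}^n$ with the same constant $c$; standard Schauder and elliptic bootstrapping applied to the difference of K\"ahler potentials then converts $C^0$ exponential decay into exponential decay in every $C^k$ norm.

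The main obstacle is the ODE analysis in the second paragraph: one has to linearize (\ref{ode}) carefully at its attractor, rule out the unstable mode (which is excluded because by Claim~2 of the proof of Theorem~\ref{negativeonreal} both $f_\alpha$ and $\cosh(\cdot+\tau_\alpha)$ satisfy $f'/f\le 1$ and therefore lie on its stable manifold), and extract uniform exponential rates for all derivatives, which then also need to be propagated to the companion profile $h$ and to the reparametrization $s\leftrightarrow t$. Once this is established, the translation of the profile estimates to estimates on the full metric tensor, and the $C^k$ bootstrap via elliptic regularity for the Monge--Amp\`ere equation, are essentially routine.
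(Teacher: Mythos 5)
Your plan is genuinely different from the paper's. The paper does not try to compare the profile functions directly: it first compares the restrictions $\omega^D$ and $\widehat\omega^D$ on the totally geodesic holomorphic disk $D$ (Claims~1--3, via a one-dimensional Ahlfors--Schwarz argument using the curvature decay from Theorem~\ref{negativeonreal}), then controls the $U(n-1,1)$-invariant potential $\phi=\tfrac{1}{2n+2}\log(\omega^n/\widehat\omega^n)$ by one-variable potential theory on $D$ (Claim~4), and only then runs the Evans--Krylov/Schauder bootstrap on the Monge--Amp\`ere equation. Your final bootstrap step is the same as the paper's, but your route to the $C^0$ input is through the profile ODE rather than through the potential.

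There is, however, a genuine gap in your $C^0$ step, and it is precisely in the sentence ``yields $s(t)-(t+\tau_\alpha)\to 0$ exponentially.'' The ODE analysis does give $f_\alpha(t)=\cosh(t+\tau_\alpha)\big(1+O(e^{-at})\big)$ for \emph{some} shift $\tau_\alpha$, and it does give $s(t)-t\to s_\infty$ exponentially for \emph{some} constant $s_\infty$ (and a similar asymptotic shift for the companion profile $h$). But nothing in the iteration of the ODE forces $s_\infty=\tau_\alpha$, and this is exactly what is needed to conclude $\cosh(s(t))/f_\alpha(t)\to 1$, without which the horizontal parts of $g_\alpha$ and $\Phi_\alpha^*g_1$ only agree up to a fixed nonzero scalar and the $C^0$ bound fails. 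Concretely, if one writes $h_\alpha=\frac{2f_\alpha f_\alpha'}{\gamma_\alpha}$ (which the Einstein relation forces, with $\gamma_\alpha=2f_\alpha(0)f_\alpha''(0)$) and observes that the corresponding quantity for $\Phi_\alpha^*g_1$ is $\gamma'=2/\alpha$ (the cone angle at $B_0$), then one finds $s_\infty-\tau_\alpha=\tfrac12\log(\gamma'/\gamma_\alpha)$; so your claim is equivalent to the exact identity $\gamma_\alpha=2/\alpha$, i.e.\ $f_\alpha(0)^2=\frac{n\alpha+1}{(n+1)\alpha}$. This is a genuine geometric input (it amounts to the fact that the horizontal distribution $\mathcal D$ on the level hypersurfaces is determined by $J$ alone and hence its curvature as an $S^1$-connection on $N(r)\to B_0$ is the same for both metrics); the autonomous ODE~(\ref{ode}) has no way of knowing $\alpha$ and hence cannot output this relation by a ``stable manifold'' or linearization argument. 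The paper never needs this identity because Claim~4 controls the volume-form ratio directly, which in turn pins down the constant $s_\infty$. To repair your proof you would either have to establish $\gamma_\alpha=2/\alpha$ independently (e.g.\ by the $J$-invariance of $\mathcal D$), or import a Claim~4-type estimate on the potential before invoking the Monge--Amp\`ere bootstrap; as written, the $C^0$ estimate is not obtained.
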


\begin{remark}\label{reformulate}
Since the map $\Phi_\alpha$ is singular on $B_0$, the pull-back $\Phi_\alpha^*g_1$ is not a metric on 
$\Omega_\alpha$, but it is a Kähler--Einstein metric on $\Omega_\alpha -B_0$. 
Theorem \ref{thm:comparison} says that 
this metric is arbitrarily close to the metric 
$g_\alpha$ in the $C^k$-topology on the complement of a suitable tubular neighborhood of $B_0$ in 
$\Omega_\alpha$, measured  with respect to the metric $g_\alpha$. 
Since any such tubular neighborhood is the preimage under $\Phi_\alpha$ 
of a tubular neighborhood of $B_0$ in $B$ for the complex hyperbolic metric $g_1$, we can rephrase 
the result also in terms of the distance from $B_0$ with respect to 
the pull-back $\Phi_\alpha^* g_1$ provided that we restrict measuring distances 
to the complement of the preimage of the radius one tubular neighborhood of $B_0$ in $B$.
\end{remark}

\begin{proof}[Proof of Theorem \ref{thm:comparison}]
Let $\omega$ (resp. $\omega_1$) 
be the K\"ahler form associated to $g_\alpha$ (resp. $g_1$). 
Put $\widehat \omega=\Phi_\alpha^*\omega_1$. The two-form $\widehat \omega$ on $\Omega_\alpha-B_0$ defines a Kähler--Einstein metric 
of constant holomorphic sectional curvature $-1$. 
The punctured holomorphic disk $D\setminus \{0\}\subset D=\{z_i=0 \text{ for } i\geq 2\}\subset \Omega_\alpha$ is totally geodesic 
for both $\omega, \widehat \omega$. 
By invariance of $\omega$ under $G$ (resp. $\omega_1$ under $\mathrm{PU}(n, 1)$) and equivariance of the map $\Phi_\alpha$ as formulated in \eqref{equivariance}, 
the two-forms $\widehat \omega^D=\widehat \omega\vert_D$ and $\omega^D=\omega\vert_D$ 
are invariant under the circle group $S^1$ of rotations acting on $D$.

We begin with showing that the metrics $\omega^D,\widehat \omega^D$ are exponentially close with the 
distance from $0\in D$, where closeness means pointwise closeness in 
norm with respect to the metric $\omega^D$,
equivalently $C^0$-closeness.
The proof of this statement is carried out in three steps. Throughout we denote for $r>0$ by 
$D_r\subset D$ the disk of radius $r$ about $0$ for the metric $\omega^D$.

\smallskip\noindent
{\bf Claim 1.} There exists a number $\kappa=\kappa(\alpha)>0$ so that 
$\frac{\widehat \omega^D}{\omega^D}\in [\kappa,\kappa^{-1}]$ on $D\setminus D_1$.

\smallskip\noindent
\emph{Proof of Claim 1.}
Choose a function $\phi$ supported in $D_1$ 
such that 
$\widehat \omega^D+dd^c\phi$  
is a K\"ahler metric on $D$ of bounded 
negative curvature. The existence of such a function is standard, see e.g. \cite{Zh96}, 
its proof will be omitted. 
Since the curvature of $\omega^D$ is also bounded negative, 
the classical Schwarz Pick lemma (see \cite{Y78} for more information) 
shows that $\frac{\widehat \omega^D+dd^c\phi}{\omega^D}\in [\kappa,\kappa^{-1}]$
for some constant $\kappa=\kappa(\alpha) >0$. 
In particular, we have $\frac{\widehat\omega^D}{\omega^D}\in [\kappa,\kappa^{-1}]$ on $D-D_1$.
\hfill$\blacksquare$

\smallskip\noindent

Let $d=d_\alpha$ be the distance function on $D$ for the metric $\omega^D$. 
To simplify the notations, 
by modifying $\widehat \omega^D$ with a potential supported in $D_1$ 
we assume that $\widehat \omega^D$
is a complete K\"ahler metric on $D$. As we are only interested in estimates outside of 
$D_1$ this does not alter our analysis.

\smallskip\noindent
{\bf Claim 2.} There exist numbers $a_1>0,C_1>0$ so that 
\[\omega^D<(1-C_1e^{-a_1\,d(0,\cdot)})^{-1}\widehat \omega^D.\]

\smallskip\noindent
\emph{Proof of Claim 2.}
By Claim 1, distances from $0$ in $D$ with respect to the distance functions of 
$\omega^D$ and $\widehat \omega^D$ are uniformly comparable. This implies that
there exists a number $\kappa \in (0,1)$ such that 
for every
$x\in D\setminus D_1$, we have $\kappa^{-1}d(0,x)>\widehat d(0,x)>\kappa d(0,x)$. 
Here $\widehat d$ is the distance function of the metric $\widehat \omega^D$.

Assume that $d(x,0)>2 \kappa^{-1}$ and 
let $\widehat Q_x\subset D\setminus \{0\}$
be the ball of radius $u=\frac{\kappa}{2}d(0,x)\leq \frac{1}{2}\widehat d(0,x)$ about $x$ for 
the metric $\widehat \omega$. Since $\widehat d(x,0)>2$, the curvature of the 
restriction of the metric $\widehat \omega^D$  
to $\widehat Q_x$ is constant $-1$.
Then up to isometry, the set $\widehat Q_x$ is the round disk in $\mathbb{C}$ of 
euclidean radius $r\in (0,1)$, with $x$ corresponding to 
the center $0$ of the disk, and equipped with the restriction of the Poincar\'e metric 
$\frac{4 dz\wedge d\bar z}{(1-\vert z\vert^2)^2}$. The Euclidean radius 
$r>0$ of $\widehat Q_x$ is computed by the formula $\cosh(u)=\frac{1+r^2}{1-r^2}$. 

Denote by $\widehat \omega_x$ the standard \emph{complete} Poincar\'e metric on $\widehat Q_x$, obtained 
as the pull-back of the 
Poincar\'e metric $\frac{4 dz\wedge d\bar z}{(1-\vert z\vert^2)^2}$ on the unit disk
by the scaling map $z\to \frac{1}{r}z$. 
This rescaling operation 
replaces the restriction of the Poincar\'e metric $\frac{4 dz\wedge d\bar z}{(1-\vert z\vert^2)^2}$
to $\widehat Q_x$ by the metric $\frac{4dz\wedge d\bar z}{(1-r^{-2}\vert z\vert)^2}$
 A standard calculation shows that 
\begin{equation}\label{apriori}
    \widehat \omega^D (x)\leq \widehat\omega_x(x)\leq 
(1-Ce^{-\kappa\, d(0,x)/2})^{-1}\widehat \omega^D(x)
\end{equation} 
for a universal constant $C>0$.

Write $\widehat \omega_x=e^{2\rho_x}\omega^D\vert \widehat Q_x$ for a function $\rho_x$ on $\widehat Q_x$.
Since $\widehat \omega_x$ is a complete metric on $\widehat Q_x$ 
and by Claim 1, $\omega^D$ is bi-Lipschitz equivalent to $\widehat \omega^D$, it follows from the construction 
of $\widehat \omega_x$ that 
$\rho_x$ is a proper function. 

Let $K_g$ be the Gauss curvature of $\omega^D$. The curvature of 
$\widehat \omega_x=e^{2\rho_x}\omega^D$ is constant $-1$ and hence denoting by 
$\Delta_x={\rm tr}\nabla^2$ the Laplacian of 
$\widehat \omega_x$, we have
\begin{equation}\label{gausscurvature1}
K_g=e^{2\rho_x}(-1-\Delta_x(-\rho_x)).\end{equation}
Now $\rho_x$ is proper and hence it assumes a minimum at some point $y\in \widehat Q_x$. Then 
it holds $\Delta_x(\rho_x)(y)\geq 0$. On the other hand, by 
Theorem \ref{negativeonreal}, the Gauss curvature $K_g$ of 
$\omega^D$ satisfies $K_g<-1$. Insertion into the equation (\ref{gausscurvature1}) implies that 
we have
$e^{2\rho_x}(y)\geq 1$. 
As $\rho_x$ assumes a minimum at $y$, this then implies  
that $\rho_x \geq 0$ and hence $\omega^D\leq \widehat \omega_x$. 
The claim 
now follows from the estimate (\ref{apriori}).
%
\hfill$\blacksquare$

\smallskip\noindent
{\bf Claim 3.}
There exist numbers $a_2>0,C_2>0$ so that 
\[\omega^D>(1-C_2e^{-a_2 \,d(0,\cdot)})\widehat \omega^D.\]

\smallskip\noindent
\emph{Proof of Claim 3.}
The proof of the claim follows from reversing the roles of $\omega^D$ and $\widehat \omega^D$
in the proof of Claim 2. 
Let $x\in D$ be such that $d(0,x)>2\kappa^{-1}$ and let $Q_x$ be
the metric disk of radius $\frac{1}{2}d(0,x)$ about $x$ for the metric $\omega^D$. 
By the second part of Theorem \ref{negativeonreal} 
and the triangle inequality, we have 
$K_g(Q_x)\subset [-1-Ce^{- \sigma \, d(0,x)},-1]$ for some constants $C=C(\alpha)>0$,
$\sigma=\sigma(\alpha)>0$.

Let $\widehat Q_x$ be the ball of radius $u=\frac{1}{2}\kappa d(0,x)$ about $x$ for the metric 
$\widehat \omega$. We know that $\widehat Q_x\subset \Omega_x$. 
Moreover, up to isometry, $\widehat Q_x$ is the round disk in $\mathbb{C}$ 
centered at $0$, of euclidean radius $r\in (0,1)$, and  
equipped with the metric $\frac{4 dz\wedge d\bar z}{(1-\vert z\vert^2)^2}$.
Here $x$ corresponds to the center $0$ of the euclidean disk, and 
the radius $r$ is computed by $\cosh(u)=\frac{1+r^2}{1-r^2}$.

Similar to the construction in the proof of Claim 2, replace the restriction of the metric 
$\widehat \omega^D$ to $\widehat Q_x$ by an incomplete conformal metric $\widehat \omega_x$ which 
is the pull-back of the Poincar\'e metric on the unit disk by a scaling map $z\to sz$. 
Here the scaling parameter $s<1$ 
is chosen in such a way that the pull-back metric $\widehat \omega_x$ can be written as 
$\widehat \omega_x=e^{2\psi}\widehat \omega^D$ where the function $\psi$ satisfies 
$e^{2\psi} \equiv \kappa^2/4$ on 
$\partial \widehat Q_z$. 
Explicitly, the parameter $s$ is determined by the equation
$(1-s^2r^2)^2=\frac{\kappa^2}{2(1-r^2)^2}$. 
As in the proof of Claim 2, we have the estimate
\begin{equation}\label{psi}
e^{2\psi(x)}>1-Ce^{-\kappa \,d(z,0)/2}\end{equation}
for a universal constant $C>0$. 

Assume from now on that $d(x,0)$ is sufficiently large that $e^{2\psi(x)}\geq \frac{1}{2}$.
There exists a smooth 
function $\rho_x$ on $\widehat Q_x$ such that 
$\widehat \omega_x= e^{2\rho_z}\omega^D\vert \widehat Q_x$. Note that by construction, we have
$e^{2\rho_x}=e^{2\psi}\frac{\widehat \omega^D}{\omega^D}$. 
As $\widehat \omega^D\leq \kappa^{-1}\omega^D$ and
$e^{2\psi}\equiv \frac{\kappa^2}{4}$ on $\partial \widehat Q_x$, 
the value of the function $e^{2\rho_x}$ on $\partial \widehat Q_x$ is smaller than
$\kappa/4$. Moreover by the estimate (\ref{psi}) and the assumption on $d(0,x)$,  
we have $e^{2\rho_x}(x)\geq 
\kappa/2$. 
Thus $\rho_z$ 
assumes a maximum at an interior  point $y\in \widehat Q_x$. Denoting by 
$\Delta_g$ the Laplacian for the metric $\omega^D$, it follows
$\Delta_g(\rho_x)(y)\leq 0$. 

Now the constant curvature $-1$ of the metric $\widehat \omega_x$ can be computed by 
\[-1=e^{-2\rho_x}(K_g-\Delta_g(\rho_x)).\]
Since $K_g(y)\geq -1-Ce^{-\sigma\, d(0,x)}$, we obtain 
$e^{-2\rho_x}(y)\geq (1+Ce^{-\sigma\, d(0,x)})^{-1}$ and hence 
$e^{2\rho_z}(y)\leq 1+Ce^{-\sigma\, d(0,x)}$. Since $y$ was a maximum for $\rho_x$, we also have
$e^{2\rho_x}(x)\leq 1+Ce^{-\sigma\, d(0,x)}$. Together with the
estimate (\ref{psi}), this completes the proof of the claim.
\hfill$\blacksquare$

\begin{remark}
    There is an alternative, slightly different way to prove Claims 2 and 3 above, which we briefly sketch now. Write $\omega^D=e^f \widehat \omega^D$ and set $f(z)=g(t)$ where $t=\ \log |z|^{2\alpha}$. Using the curvature decay of $K_\alpha$ and Claim 1, we see that $g$ satisfies the double sided inequality $1+C(-t)^{\gamma} \ge e^{-g}\Big(\alpha^2e^{-t}(1-e^t)^2g''(t)+1\Big)>1$ for some $C>0$ and $\gamma\in(0,1)$. Using the maximum principle, one can prove that $g(t)\to 0$ as $t\to 0^-$. Moreover, the inequality above implies that $g(t)+ C(-t)^{\gamma}$ is concave, equal to $+\infty$ (resp. $0$) at $t=-\infty$ (resp. $t=0$), hence it is non-negative. Similarly, $g(t)+t-C(-t)^{\gamma}$ is convex, equal to $-\infty$ (resp. $0$) at $t=-\infty$ (resp. $t=0$), hence it is nonpositive. This yields the desired estimate $|g(t)|\le C(-t)^{\gamma}$ near $t=0$.
\end{remark}

\smallskip\noindent
{\bf Claim 4.}
Put $\rho=\frac{1}{2} \log \frac{\omega^D}{\hat \omega^D}$. For any integer $k\ge 0$ there
are numbers $a_k>0,C_k>0$ such that $\Vert \rho \Vert_{C^{k}}<C_k e^{- a_k d(0,\cdot)}$ 
where the $C^k$-norm at a point $x$ is taken as the $C^k$-norm in standard coordinates on the
unit ball about $x$ for the metric $\hat \omega^D$.

\smallskip\noindent
\emph{Proof of Claim 4.}
By Claim 2 and Claim 3, we have
\[(1-C_1e^  {-a_1 d(0,\cdot)})\omega^D<\hat \omega^D<(1-C_2e^{-a_2 d(0,\cdot)})^{-1}\omega^D\]
and hence $\vert \rho\vert 
\leq Ce^{-a d(0,\cdot)}$ for some $C>0,a>0$ by smoothness of the 
logarithm near $1$. Equation (\ref{gausscurvature1}) together with the estimates for the 
Gauss curvature for $\omega_D$ then shows that up to increasing $C$ and decreasing $a$, we have 
\[ \vert \Delta(\rho)\vert =\vert e^{-2\rho}K_g+1\vert \leq Ce^{-a d(0,\cdot)}\]
where $\Delta$ is the Laplacian of the constant curvature metric $\hat \omega$. Standard Schauder estimates now imply that 
\[\Vert \rho \Vert_{C^{1,\alpha}}\leq C e^{-a d(0,\cdot)},\]
for a possibly different constant $C$. Since $\omega^D=e^{2\rho}\hat \omega^D$, this implies that the $C^1$ norms with respect to covariant derivatives of $\omega^D$ and $\hat \omega^D$ are uniformly comparable. Now, by \eqref{derivatives estimates} and \eqref{Kalpha}, the quantity $K_g+1$ goes to zero exponentially fast along with all its covariant derivatives with respect to $\omega^D$. Since  $\Delta(\rho) = e^{-2\rho}K_g+1$, we get exponential decay for the $C^{2,\alpha}$ norm of $\rho$ with respect to $\hat \omega^D$. We can now iterate the argument by the usual bootstrapping process.  
\hfill $\blacksquare$

\smallskip

\smallskip\noindent
We can now conclude the proof of the theorem. Set $\widehat \omega:= \Phi_\alpha^*\omega_1$ which is a Kähler metric away from $B_0$. On that locus, one can write
\[\omega=\widehat \omega+dd^c \phi=\widehat \omega +\frac{i}{2} \partial \bar \partial \phi\]
where $\phi:=\frac{1}{2n+2}\log \Big(\frac{\omega^n}{\widehat \omega^n}\Big)$. 
By invariance of $\omega$ under $G$ (resp. $\omega_1$ under $\mathrm{PU}(n, 1)$) and equivariance of the map $\Phi_\alpha$ in the sense of \eqref{equivariance}, the function $\phi$ is $G$-invariant and hence it is determined by its restriction to the disk $D$. 

The restriction $\phi^D=\phi\vert_D$ 
of the potential $\phi$ to the disk $D$ satisfies
\[\omega^D=\hat \omega^D+dd^c \phi^D.\]
By Claim 4, we know that for any integer $k\ge 0$ the estimate
\begin{equation}
    \label{estimate disk directions}
\big| dd^c \phi^D\big|_{C^{k}} 
\leq C_k e^{-a_k \,d(0,\cdot)}
\end{equation}
holds for some $a_k>0,C_k>0$. 

But $\phi^D=\phi|_D$ and the restriction of $\phi$ to any $G$-orbit is constant. Such an orbit is a distance hyperplane 
from the totally geodesic subspace $B_0$ and hence 
at distance uniformly bounded away from $B_0$, 
it is a real 
hyperplane whose second fundamental form is uniformly bounded. 
Thus near any point $x$ of sufficiently large distance from $B_0$,
there are real coordinates $(x_1,\dots, x_{2n})$ of uniformly 
bounded derivatives of all order so that $x_1=d(\cdot,B_0)$ and
that the function $\phi$ only depends on $x_1$. Furthermore, 
if $x\in D$ then 
these coordinates can be chosen so that the disk 
$D$ is the set $x_3=\cdots =x_{2n}=0$, and if $x\not\in D$ then
these coordinates are obtained from coordinates about a point 
$\hat x\in D$ by precomposing with an element of 
$PU(n-1,1)$. 
Hence the $C^{k}$-norm of $dd^c\phi$ at a point $x\in \Omega_\alpha$ with respect to $\widehat \omega$ is bounded from above by a constant multiple of the $C^{k}$-norm of $dd^c \phi^D$ at a point $x'\in D$ with respect to $\hat \omega^D$, where $x'$ satisfies with $d(0,x')=d(B_0, x)$. The theorem now follows from \eqref{estimate disk directions}.
\end{proof}

\section{The construction of Stover and Toledo}
\label{sec ST}

Consider a compact arithmetic complex hyperbolic orbifold $\hat M$ of simplest type and dimension $n\geq 2$. Such an orbifold is given 
as follows, see e.g. \cite[\textsection~\!2.2]{McR07} or \cite{St07}. There is a purely imaginary quadratic
extension $L$ of a totally real number field $F$ with 
$[F:\mathbb{Q}]\geq 2$, and there is a Hermitian quadratic form 
\[q(z,\bar z)=-a_0z_0\bar z_0 +\sum_{i=1}^n a_i z_i\bar z_i\]
with coefficients in the ring $O_F$ of integers in $F$, anisotropic over $\mathbb{Q}$,  
so that $\hat M=\hat \Gamma \backslash \mathbb{C}\mathbb{H}^n$ where
$\hat \Gamma={\rm GL}(n+1,\mathcal O_L)\cap U(q)$. The latter sits as a lattice inside $\mathrm{PU}(n,1)$, and its action on $\mathbb{C}\mathbb{H}^n$ has finite stabilizers.
 
The involution 
\[\iota:(z_0,z_1,\dots,z_n)\to (z_0,\dots,z_{n-1},-z_n)\] preserves $q$ and hence lies in
$\hat \Gamma$. Since $\hat \Gamma$ is residually finite, there exists a finite index subgroup $\hat \Gamma' \vartriangleleft \hat \Gamma$ such that $\iota \notin \hat \Gamma' $. Since $\hat \Gamma'$ is normal, it is preserved by the action of $\iota$ by conjugation. By construction, $\iota$ descends to a non-trivial holomorphic involution of $\hat M'=\hat \Gamma' \backslash \mathbb{C}\mathbb{H}^n $. 
Its fixed point set is a totally geodesic suborbifold $\hat H'\subset \hat M'$ of codimension one which is a compact arithmetic
complex hyperbolic orbifold of simplest type in its own right.  
In general, this orbifold
is neither embedded in $\hat M'$ nor connected. As a consequence of \eqref{nested} below, one can actually choose  $\hat \Gamma'$ to be one of the congruence subgroups $\hat \Gamma_\ell$ defined in the next paragraph.

An ideal 
$\mathcal I_L$ in the ring $\mathcal O_L$ of integers of $L$ determines a 
\emph{congruence subgroup} $\Gamma$ of $\hat \Gamma$, 
defined as  the kernel of the homomorphism 
$\hat \Gamma={\rm GL}(n+1,\mathcal O_L)\cap U(q)\to {\rm GL}(n+1,\mathcal O_L/\mathcal I_L)$ induced by 
the quotient projection $\mathcal O_L\to \mathcal O_L/\mathcal I_L$. 
By construction, $\Gamma$ also is invariant 
under the involution $\iota$. By choosing a \emph{sufficiently deep}
congruence subgroup we may assume that $M=\Gamma\backslash \mathbb{C}\mathbb{H}^n$ is a manifold,
called a \emph{congruence manifold} in the sequel. By perhaps passing
to a further congruence cover we may also assume that the preimage $H$ 
of $\hat H$ is a compact embedded totally geodesic submanifold of $M$. 

Using the pair $(M,H)$ as a starting point, called a \emph{standard congruence pair} in the sequel, 
the goal of this section is to establish the following improvement of one of the main results
of \cite{ST22}. For its formulation, recall that 
the \emph{normal injectivity radius} of a totally geodesic hypersurface 
$H\subset M$ is the supremum of all numbers $R>0$ so that the tubular neighborhood of 
radius $R$ about $H$ is diffeomorphic to a disk bundle over $H$.

 \begin{theorem}\label{mainapp}
 Let $(M,H)$ be a standard congruence pair. Then for all $d\geq 2,R>0$ there exists a congruence cover 
 $\Pi:M^\prime\to M$ such that for any component $H^\prime\subset M'$ of the preimage of $H$, the following 
 holds true.
 \begin{enumerate}
 \item The normal injectivity radius of $H^\prime$ is at least $R$.
 \item There exists a finite étale cover $\Pi':N^\prime\to M^\prime$ and a degree $d$ cyclic cover 
$N\to N^\prime$, totally branched along $\Pi'^{-1}(H^\prime)$. 
\end{enumerate}
 \end{theorem}
 

  \begin{remark}
 \label{reformulation2}
 In other words, if we denote by $p$ the composition $N\to N'\to M'$, we get a tower of finite covers 
 \[N\overset{p}{\to} M'\overset{\Pi}{\to} M\]
  such that 
  \begin{enumerate}
  \item $\Pi$ is étale.
  \item $p$ is a branched cover which ramifies at order $d$ over the chosen connected component $H'\subset M'$ of $\Pi^{-1}(H)$, i.e. $p^*H'=d \,p^{-1}(H')$ as Cartier divisors. Moreover, $p$ is étale over $M'\setminus H'$. The degree of $p$ may be very large (in a non-explicit way) and $p^{-1}(H')$ may be disconnected.

   \item $H'$ has normal injectivity radius at least $R$. 
   \end{enumerate}
 \end{remark}

The proof of Theorem \ref{mainapp} proceeds in two steps. In a first step we use the results of 
\cite{Be00} to verify that 
for a given standard congruence pair $(M,H)$ and a number $R>0$ 
we can find a congruence cover $M^\prime\to M$ with the property that the normal injectivity
radius of one (and hence any) component $H^\prime$ of the preimage of $H$ is larger than $R$. 
In a second step we then invoke the main result of \cite{ST22} to complete the proof.

The ideals $\mathcal I_L\subset \mathcal O_L$ can be equipped with a norm $\vert \mathcal I_L\vert$ which is just the number of 
elements in the quotient ring $\mathcal O_L/\mathcal I_L$. For any $i\geq 1$ there are only finitely many ideals of norm
at most $i$. Each of these ideals can be factorized into prime ideals. 
We denote by $\hat \Gamma_\ell$ the congruence subgroup defined by $\mathcal O_L/I_\ell(L)$ 
where $I_\ell(L)$ is the product of all prime ideals of norm less than $\ell$. This is 
a normal subgroup of $\hat \Gamma$ of finite index, and 
the groups $\hat \Gamma_\ell$ are nested and exhaustive, which means that 
\begin{equation}
\label{nested}
\forall \ell, \quad \hat \Gamma_\ell \vartriangleright \hat \Gamma_{\ell+1}, \quad \mbox{and} \quad \bigcap_{\ell} \hat \Gamma_\ell={\rm Id}.
\end{equation} 
In other words, these groups form a \emph{congruence tower}, and for any congruence
subgroup $\Gamma$ of $\hat \Gamma$, there exists some $\ell$ so that
$\hat \Gamma_\ell \vartriangleleft \Gamma$. 
The fundamental group of the congruence cover in Theorem \ref{mainapp} will be 
a congruence subgroup from the 
fixed tower, but many other choices will do as well.

In slight deviation from the previous notations, let $H$ be 
 a compact \emph{connected} embedded totally geodesic hypersurface in the congruence manifold $M$.
The preimage of $H$ in the universal covering $\mathbb{C}\mathbb H^n$ of $M$
consists of a countable collection ${\mathcal H}$
of pairwise disjoint totally geodesic hyperplanes which are transitively permuted by the deck group 
$\Gamma=\pi_1(M)$ of $M$. 
The following is well known.

\begin{lemma}\label{malnormal}
The stabilizer ${\rm Stab}(H_0):={\rm Stab}_\Gamma(H_0)$ 
of a component $H_0\in {\mathcal H}$ is a malnormal subgroup of 
$\Gamma$. 
\end{lemma}
\begin{proof} 
If $\phi\in \Gamma\setminus {\rm Stab}(H_0)$ then $\phi(H_0)$ is a component of 
${\cal H}$ which is stabilized by $\phi {\rm Stab}(H_0)\phi^{-1}$. As $\phi$ does not stabilize 
$H_0$, this component is distinct and hence disjoint from $H_0$. If $\psi\in {\rm Stab}(H_0)\cap 
\phi {\rm Stab}(H_0) \phi^{-1}$ then $\psi$ also preserves the unique shortest geodesic arc connecting 
$H_0$ to $\phi(H_0)$ and hence it fixes it midpoint. Since $\Gamma$ acts freely on $\mathbb{C}\mathbb{H}^n$, 
this implies that $\psi={\rm Id}$, from which the lemma is immediate.
\end{proof}

As a consequence, the components of ${\cal H}$ are in bijection with the cosets
$\Gamma/{\rm Stab}(H_0)$. 

%

Each of the groups 
$\Gamma_i=\hat \Gamma_i\cap \Gamma\vartriangleleft\Gamma$ acts on ${\mathcal H}$ as a group of permutations,  but 
if $\Gamma_i\not=\Gamma$ then in general, the action is not  transitive any more. Indeed, the $\Gamma_i$-orbit of a component of ${\mathcal H}$ is a
$\Gamma_i$-invariant subset of ${\mathcal H}$, and these sets are transitively permuted 
by the action of the group $\Gamma_i\backslash \Gamma$; the latter being realized as the deck group of the covering $M_i\to M$, where $M_i:=\mathbb C\mathbb H^n/\Gamma_i$.

\begin{proposition}\label{injectivity}
For the congruence tower $\Gamma\vartriangleright \Gamma_1\vartriangleright \Gamma_2\vartriangleright \cdots$, 
the distance in $\mathbb{C}\mathbb H^n$ between distinct hyperplanes in the 
same $\Gamma_i$-orbit for the action on $\mathcal{H}$ tends to infinity as $i\to \infty$. 
\end{proposition}
\begin{proof} The stabilizer of $H_0$ in 
$\Gamma_i$ is the congruence subgroup ${\rm Stab}(H_0)\cap \Gamma_i$ of ${\rm Stab}(H_0)$, in particular it is 
a normal subgroup of ${\rm Stab}(H_0)$. 
By Lemma \ref{malnormal}, the intersection 
${\rm Stab}(H_0)\cap \Gamma_i$ is a malnormal subgroup of 
$\Gamma_i$. 

The group $\Gamma$ acts transitively by conjugation on the conjugates 
of ${\rm Stab}(H_0)$ in $\Gamma$, and 
the orbit space for this action 
has a natural identification with ${\mathcal H}$. 
The group $\Gamma$ also acts transitively by conjugation on the 
conjugates of ${\rm Stab}(H_0)\cap \Gamma_i$. Since 
$\Gamma_i\cap {\rm Stab}(H_0)$ is a normal subgroup of 
${\rm Stab}(H_0)$, the stabilizer of ${\rm Stab}(H_0)\cap \Gamma_i$ for 
this action also equals 
${\rm Stab}(H_0)$. In other words, the orbit space for the action 
of $\Gamma$ on the conjugates of $\Gamma_i\cap {\rm Stab}(H_0)$ 
also can be viewed as the space of right cosets 
$\Gamma/{\rm Stab}(H_0)$.

For $\psi\in \Gamma$, the stabilizer of 
$\psi(H_0)$ in $\Gamma_i$ is 
$\Gamma_i\cap \psi {\rm Stab}(H_0)\psi^{-1}=\psi (\Gamma_i\cap {\rm Stab}(H_0))\psi^{-1}$.  
As a consequence, the space of $\Gamma_i$-orbits  for the 
action of $\Gamma_i$ on ${\mathcal H}$ can be identified
with the double coset space $\Gamma_i\backslash \Gamma/ {\rm Stab}(H_0)$.

The action of $\Gamma$ on $\mathbb{C}\mathbb{H}^n$ is proper and cocompact, and the
same holds true for the action of ${\rm Stab}(H_0)$ on $H_0$. Thus if we denote
by $d$ the distance in $\mathbb{C}\mathbb{H}^n$, then the following two properties hold. First, given any $H'\in \mathcal H$ distinct from $H_0$, the distance $d(H_0,H')$ is positive, achieved by a geodesic arc such that the starting point is contained in a fixed fundamental domain for the action of ${\rm Stab}(H_0)$ on $H_0$. Second, up to conjugation with elements of ${\rm Stab}(H_0)$, for any fixed number $R>0$  there are only finitely many 
$\phi\in \Gamma$ so that $d(\phi(H_0),H_0)\leq R$. Equivalently, there are only finitely
many components $H^\prime\in {\mathcal H}$ so that $d(H_0,H^\prime)\leq R$ and that
the starting point in $H_0$ of the shortest geodesic arc connecting 
$H_0$ to $H^\prime$ is contained in a fixed fundamental domain for the action of 
${\rm Stab}(H_0)$ on $H_0$. Any other $\hat H\in {\mathcal H}$ so that 
$d(H_0,\hat H)\leq R$ is contained in the ${\rm Stab}(H_0)$-orbit of these finitely many 
elements of ${\mathcal H}$.

Let $\phi_1,\dots,\phi_k$ be representatives of these finitely many 
${\rm Stab}(H_0)$-conjugacy classes, chosen as representatives 
of minimal translation length in their class (it will be convenient to set $\phi_0=\mathrm{Id}$). 
For each $i$ let $\pi_i:\Gamma\to \Gamma_i\backslash \Gamma$ be the quotient homomorphism. 
We have to show that 
\begin{equation}
\label{Pm}
\exists m\geq 1; \,\forall \ell=1,\dots, k, \quad \pi_m(\phi_\ell)\not\in \pi_m({\rm Stab}(H_0)).
\end{equation}
Indeed, what we want to show is that there exists $m\ge 1$ such that one has $d(H_0, \eta H_0)>R$ for any $\eta \in \Gamma_m$ not belonging to $\Gamma_m \cap \rm{Stab}(H_0)$.  By the definition of the $\phi_\ell$, this is equivalent to having $\eta (H_0) \neq \phi_\ell (H_0)$ for all $\ell=1,\dots, k$ and all $\eta\in \Gamma_m$. In order words, $\phi_\ell \notin \Gamma_m \cdot \mathrm{Stab}(H_0)$, i.e. for any $\gamma \in \mathrm{Stab}(H_0)$, the $\Gamma_m$-orbits $\Gamma_m \cdot \phi_\ell$ and $\Gamma_m \cdot \gamma$ are disjoint. But this is exactly saying that $\pi_m(\phi_\ell)\notin \pi_m({\rm Stab}(H_0)).$
  
To see that \eqref{Pm} holds, we follow \cite{Be00} (Lemme principal). First of all, we observe that if $\pi_{m_0}(\phi_\ell)\notin \pi_{m_0}({\rm Stab}(H_0))$ for {\it some} $\ell$ and some $m_0\ge 1$, then it holds for that $\ell$ and for {\it any} $m\ge m_0$. Therefore it suffices to check that \eqref{Pm} holds for any given $\ell \in \{1, \ldots, k\}$. We fix such an $\ell$ until the end of the proof. Now, since (the real form of)
${\rm GL}(n,\mathbb{C})$ is not Zariski dense in (the real form of) 
${\rm GL}(n+1,\mathbb{C})$, for each $\ell$ 
there exists a polynomial $P$ on $GL(n+1,\mathbb{C})$ vanishing identically 
on ${\rm GL}(n,\mathbb{C})$ and such that $P(\phi_\ell)\not=0$. 
By the explicit form of $\Gamma$, we may assume that 
the coefficients of this polynomial are contained in the ring of integers $\mathcal O_L$ of the field $L$.
Namely, using the embedding of ${\rm GL}(n,\mathbb{C})$ into 
${\rm GL}(n+1,\mathbb{C})$ as the group of matrices $(a_{ij})$ in block form, with $a_{1j}=a_{j1}=0$ for $j>1$, 
a matrix in ${\rm GL}(n+1,\mathcal O_L)\setminus {\rm GL}(n,\mathcal O_L)$ has some nontrivial coefficients 
among the entries $\{a_{1j},a_{j1}\}$ and hence one can construct such a polynomial
explicitly.  
 
By the proof of Lemma 1 in \cite{Be00} or explicit considerations, there exists a prime ideal 
$\mathcal J_\ell\subset \mathcal O_L$  
such that for the quotient map $\eta:{\rm GL}(n+1,\mathcal O_L)\to 
{\rm GL}(n+1,\mathcal O_L/\mathcal J_\ell)$ it holds 
$\bar P(\eta(\phi_\ell))\not=0$ where $\bar P$ is the polynomial with coefficients
in the field 
$\mathcal O_L/\mathcal J_\ell$ obtained by applying the morphism $\mathcal O_L\to \mathcal O_L/\mathcal J_\ell$ to the 
coefficients of $P$. 
Set $F:=\mathcal O_L/\mathcal J_\ell$ and $\Gamma^\prime={\rm ker}(\eta|_\Gamma)$. Then $\Gamma^\prime\vartriangleleft \Gamma$ has finite
index and by construction there exists some $m$ so that $\Gamma_m\vartriangleleft \Gamma^\prime$. Therefore we have the following commutative diagram 
\[
\begin{tikzcd}
& &\Gamma_m \backslash \Gamma \arrow[d, two heads]  \\
\mathcal O_L \arrow[d]&\Gamma \arrow[l, "P|_\Gamma", swap] \arrow[ur, "\pi_m"] \arrow[d,"\eta|_\Gamma", swap] \arrow[r,"\pi'"]& \Gamma'\backslash \Gamma  \arrow[dl, "\eta'"]\\
F&\mathrm{GL}(n+1, F) \arrow[l, "\bar P",]& 
\end{tikzcd}
\]
and $\eta|_\Gamma$ factors through $\pi'$ by definition of $\Gamma'$. Fix an arbitrary element $\gamma \in \mathrm{Stab}(H_0)$.  Since ${\rm Stab}(H_0)\subset {\rm GL}(n,\mathbb{C})$, we have $ P(\gamma)=0$. Therefore we also have $\bar P(\eta(\gamma))=0$. Moreover, $\bar P(\eta(\phi_\ell))\neq 0$ by construction of $P$ and $F$. This implies that $\eta(\gamma)\neq \eta(\phi_\ell)$, hence $\pi'(\gamma)\neq \pi'(\phi_\ell)$ since $\eta|_\Gamma$ factors through $\pi'$. In particular, one must have $\pi_m(\gamma)\neq \pi_m(\phi_\ell)$. Since $\gamma\in \mathrm{Stab}(H_0)$ is arbitrary, \eqref{Pm} follows and the proposition is proved. 
\end{proof}


Resuming the notations from Theorem \ref{mainapp}, we obtain.

\begin{corollary}\label{normalin}
For every $R>0$ there exists a number $j(R)>0$ so that for $j>j(R)$ 
the normal injectivity radius of 
a component of the preimage of $H$ in $M_j=\Gamma_j\backslash 
\mathbb{C}\mathbb{H}^n$ is at least $R$.
\end{corollary}
\begin{proof}
Let $H_j\subset M_j$ be a component of the preimage of $H$ 
and assume that
the normal injectivity radius of $H_j$ in $M_j$ is smaller than $\rho$ for some $\rho >0$. 
By definition, there is then a geodesic arc $\gamma\subset  M_j$ with endpoints on $H_j$ 
and of length at most $2\rho$ which is not homotopic with fixed endpoints into $H_j$. 
A lift of $\gamma$ to the universal covering is a geodesic of length at most $2\rho$
which connects two distinct lifts of $H_j$ 
to $\mathbb{C}\mathbb{H}^n$. 
As these lifts are components of ${\mathcal H}$ 
in the same orbit under the action of the fundamental group of 
$M_j$, it follows from Proposition \ref{injectivity} that $j\leq j(2\rho)$ for a number 
$j(2\rho)>0$ only depending on $\rho$.  
This is what we wanted to show.
\end{proof}


To complete the proof of Theorem \ref{mainapp},
we use the following main result of \cite{ST22} as the key ingredient. It is a concatenation of Corollary~2.15, Theorem~2.16 and Theorem~3.4 in {\it loc. cit.}
 
 \begin{theorem}[\cite{ST22}]\label{cupproduct}
Let $M$ be any congruence manifold and let $\eta\in H^{1,1}(M;\mathbb{C})$
be the Poincar\'e dual of a compact connected embedded totally geodesic hypersurface $H\subset M$.
Then 
\begin{enumerate}
\item There is a congruence cover $\Pi:M^\prime\to M$ so that $\Pi^*\eta$ is contained in the image of the cup product map $\wedge^2H^1(M^\prime;\mathbb{Q})\to H^2(M^\prime;\mathbb{Q})$.
\item For any integer $d\ge2$, there is a further finite étale cover $\Pi':M''\to M'$ such that $(\Pi \circ \Pi')^*\eta$ is divisible by $d$ in $H^2(M'', \mathbb Z)$. In particular, there is a finite cyclic cover $N\to M''$ of degree $d$ totally ramified over $(\Pi \circ \Pi')^{-1}(H)$.
\end{enumerate}
\end{theorem}

\begin{proof}[Proof of Theorem \ref{mainapp}]
Given a number $R>0$, Corollary \ref{normalin} shows that 
there exists a number $m_0>0$ so that for any $m\geq m_0$ the normal injectivity radius of any component of the preimage of $H$ under the cover $M_{m} \to M$ is at least $R$. 

Let $H^\prime$ be a component of the preimage of $H$ in $M_{m_0}$. By Theorem \ref{cupproduct}, by passing to a further congruence cover $\Pi_1:M_{m_1}\to M_{m_0}$, one can guarantee the existence of a finite ramified cover (of large, non-explicit degree) $N\to M_{m_1}$ which ramifies at order exactly $d$ along $\Pi_1^{-1}(H')$. This proves Theorem~\ref{mainapp}.
\end{proof}

The manifolds from our main theorem are the branched coverings 
constructed in Theorem \ref{mainapp}. That these manifolds are not ball quotients 
was established in \cite{ST22} (see the proof of Theorem 1.5 of \cite{ST22}). 

\begin{theorem}[Theorem 1.5 of \cite{ST22}]
A covering of a compact ball quotient, branched along a smooth 
embedded totally geodesic submanifold, is not a quotient of the ball.  
\end{theorem}

\section{Analysis of the Kähler--Einstein cone metric}\label{sec:analysis}

Let $M=\Gamma\backslash B$ be a compact ball quotient of complex dimension $n$ 
where $\Gamma$ is a torsion free arithmetic lattice in ${\rm PU}(n,1)$ of simple type.
We assume that $M$ contains 
a smooth totally geodesic embedded subvariety $D\subset M$ of codimension one. 

\subsection{Ampleness of the adjoint divisor}

The following observation is well known, see for example \cite{ST22}. As we shall use some 
more specific information, we provide the proof. 

\begin{lemma}
\label{normal bundle}
The normal bundle $N_D=\mathcal O_M(D)|_D$ satisfies 
\[c_1(N_D) = -\frac 1n c_1(K_D).\]
\end{lemma}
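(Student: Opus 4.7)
The plan is to combine the adjunction formula with the fact that both $M$ and $D$ carry compatible complex hyperbolic Kähler--Einstein metrics. First, since $M=\Gamma\backslash B$ is a ball quotient, the induced metric $\omega_M$ is Kähler--Einstein: $\mathrm{Ric}(\omega_M) = c\,\omega_M$ for some negative constant $c$ (depending on normalization), whence
\[
c_1(K_M) = -\tfrac{c}{2\pi}\,[\omega_M] \in H^2(M,\mathbb R).
\]

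Next I would exploit the fact that $D$ is totally geodesic of complex codimension one in $M$. Lifting to $B$, any component of the preimage of $D$ is a totally geodesic complex hypersurface in the ball, hence itself a lower dimensional complex hyperbolic ball. Therefore $D$ is a $(n-1)$-dimensional ball quotient, and $\omega_M|_D$ is a complex hyperbolic Kähler--Einstein metric on $D$ with the \emph{same} holomorphic sectional curvature as $\omega_M$. For a complex hyperbolic metric of fixed holomorphic sectional curvature on a complex $k$-manifold, the Einstein constant scales like $k+1$; applied to $M$ and $D$ this forces
\[
c_1(K_M)|_D \;=\; \frac{n+1}{n}\,c_1(K_D).
\]

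The last step is adjunction: $K_D\cong (K_M\otimes \mathcal O_M(D))\big|_D$, so
\[
c_1(K_D) \;=\; c_1(K_M)|_D + c_1(N_D) \;=\; \frac{n+1}{n}\,c_1(K_D) + c_1(N_D),
\]
which immediately rearranges to $c_1(N_D) = -\tfrac{1}{n}\,c_1(K_D)$.

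The only step needing care is the ratio $(n+1)/n$ between $c_1(K_M)|_D$ and $c_1(K_D)$; it rests on two ingredients that should be invoked cleanly, namely that $D$ inherits a complex hyperbolic metric (using that totally geodesic complex submanifolds of the ball are themselves balls), and that Ricci of a complex hyperbolic metric of fixed holomorphic sectional curvature on a complex $k$-fold equals $-(k+1)$ times a fixed multiple of the metric. No serious obstacle is anticipated.
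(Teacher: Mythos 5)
Your proposal is correct and follows essentially the same route as the paper: compute the Einstein constants $-(n+1)$ for the complex hyperbolic metric on $M$ and $-n$ for its restriction to the totally geodesic hypersurface $D$ (itself a ball quotient), deduce $c_1(K_M)|_D=\frac{n+1}{n}c_1(K_D)$, and conclude via adjunction.
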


\begin{proof}
On the ball $B$, the complex hyperbolic metric $-\frac{i}{4}\partial \bar \partial \log(1-|z|^2)$ descends to a Kähler metric $\omega_B$ on $M$ which satisfies $\mathrm{Ric} \, \omega_B=-2(n+1)\,\omega_B$. In particular, we have 
\[c_1(K_M)=2(n+1)\,[\omega] \quad \mbox{in} \,\,H^2(M, \mathbb R).\] Now, one can assume without loss of generality that a connected component of the inverse image of $D$ in the universal cover $B$ of $M$ is given by the equation $z_1=0$. Performing the same computation as before, one sees that $\mathrm{Ric} \, {\omega_B}|_D=-n\, {\omega_B}|_D$. In particular, we have \begin{equation}
  \label{canonical}  
  c_1(K_M)|_D=\frac{n+1}{n}\, c_1(K_D).
\end{equation} 
Combining \eqref{canonical} with the adjunction formula $(K_M+D)|_D\simeq K_D$, we get
\begin{equation}
  \label{canonical 2}  
  c_1(D)|_D=-\frac{1}{n}\, c_1(K_D).
\end{equation} 
which proves the lemma. 
\end{proof}

\begin{remark}
    It follows from the lemma above that the normal bundle of $D$ is negative. By Satz 8 in \textsection 3 of \cite{Gr62},
   it is possible to find a surjective holomorphic map $\pi:M\to M^*$ where $M^*$ is a compact normal analytic space, $\pi$ contracts $D$ to a point and $\pi$ is an isomorphism when restricted to the complement of $D$.  Then the singularities of $M^*$ are never log canonical. Indeed, assuming that $K_{M^*}$ is $\mathbb Q$-Cartier, then we would have $K_M=\pi^*K_{M^*}+bD$ for some $b\in \mathbb Q$. Restricting the formula to $D$ and using \eqref{canonical}-\eqref{canonical 2}, we infer that $b=-(n+1)<-1$.
\end{remark}

\begin{lemma}
\label{ampleness}
Let $a\in [0,\infty)$. The $\mathbb R$-line bundle $K_M+aD$ is ample if and only if $a<n+1$.
\end{lemma}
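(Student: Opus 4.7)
The plan is to combine an adjunction-based restriction computation with a convex-combination argument in the real Néron-Severi space. First I would compute the restriction of $K_M + aD$ to $D$. Using the adjunction formula $K_M|_D = K_D - N_D$, where $N_D = \mathcal{O}_M(D)|_D$, together with Lemma~\ref{normal bundle} giving $c_1(N_D) = -\frac{1}{n} c_1(K_D)$, I obtain
\[
(K_M + aD)|_D \;=\; K_D + (a-1)\,N_D \;\equiv\; \tfrac{n+1-a}{n}\, K_D
\]
as $\mathbb{R}$-classes on $D$. Since $D$ is itself a compact quotient of the $(n-1)$-ball (it is a connected totally geodesic complex submanifold of the arithmetic ball quotient $M$), the restriction of the complex hyperbolic metric shows that $K_D$ is ample. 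This immediately settles the necessity: for $a \geq n+1$ the right-hand side is numerically trivial (at $a = n+1$) or anti-ample, so $K_M + aD$ cannot restrict to an ample class on $D$ and therefore cannot be ample.

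For sufficiency when $a \in [0, n+1)$, the key intermediate step is to show that $L_0 := K_M + (n+1) D$ is nef. From $c_1(K_M) = (n+1)[\omega_B]$ being a Kähler class, $K_M$ is itself ample. For any irreducible curve $C \subset M$: if $C \not\subset D$, then $K_M \cdot C > 0$ and $D \cdot C \geq 0$ (as $C$ is not contained in the effective divisor $D$), so $L_0 \cdot C > 0$; if $C \subset D$, then by the restriction computation with $a = n+1$ one has $L_0 \cdot C = (L_0|_D)\cdot C = 0$. By Kleiman's criterion, $L_0$ is nef.

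To conclude, I rewrite
\[
K_M + aD \;=\; \Bigl(1 - \tfrac{a}{n+1}\Bigr) K_M \;+\; \tfrac{a}{n+1}\, L_0,
\]
expressing $K_M + aD$ as a sum with strictly positive weight on the ample class $K_M$ and non-negative weight on the nef class $L_0$. Because the ample cone $\mathrm{Amp}(M) \subset N^1(M)_{\mathbb R}$ is an open convex cone whose closure is the nef cone, the elementary identity $\mathrm{Amp}(M) + \mathrm{Nef}(M) = \mathrm{Amp}(M)$ valid for any open convex cone and its closure in a finite-dimensional real vector space shows that $K_M + aD$ is ample. The only nontrivial point is the nefness of $L_0$, whose verification is precisely what pins the threshold at $a = n+1$; everything else is formal, given the ampleness of $K_M$ and $K_D$ from ball-quotient geometry.
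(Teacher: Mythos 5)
Your restriction computation and your necessity argument are the same as the paper's, but your sufficiency argument takes a genuinely different and slightly more elementary route. The paper observes that $K_M+aD$ is big (ample plus effective) with non-K\"ahler (augmented base) locus contained in $D$, and then applies Boucksom's numerical criterion (Theorem~3.17(iii) of \cite{B04}): a big class is K\"ahler if and only if its restriction to every irreducible component of its non-K\"ahler locus is. Your proof avoids that black-box theorem entirely: you show directly that $L_0 = K_M + (n+1)D$ is nef by checking intersection numbers (positive on curves not in $D$ since $K_M$ is ample and $D$ is effective, and zero on curves in $D$ since $L_0|_D$ is numerically trivial by the restriction formula), and then express $K_M+aD$ for $a\in[0,n+1)$ as a convex combination with strictly positive weight on the ample class $K_M$ and non-negative weight on the nef class $L_0$, which lies in the open ample cone. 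Both proofs pivot on the same formula $(K_M+aD)|_D\equiv\tfrac{n+1-a}{n}K_D$; yours trades the appeal to \cite{B04} for the elementary convex-cone identity $\mathrm{Amp}+\mathrm{Nef}=\mathrm{Amp}$, so it is a bit more self-contained, while the paper's is shorter to write. One small slip: nefness of $L_0$ follows from the definition (non-negative intersection with all irreducible curves), not from Kleiman's criterion, which is a criterion for ampleness; the conclusion is nonetheless correct.
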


\begin{proof}
 Let us first observe that $K_M+aD$ is big for any $a\ge 0$, as a sum of an ample divisor and an effective divisor. Moreover, the non-Kähler locus (or augmented base locus) of $K_M+aD$ is clearly included in $D$.
 
 Next, \eqref{canonical} and \eqref{canonical 2} yield
 \[(K_M+aD)|_D \equiv \frac{n+1-a}{n}\, K_D\]
 and the latter is ample if and only if $a<n+1$. In particular, the same holds for the restriction of $K_M+aD$ to any irreducible component of its non-Kähler locus. The conclusion of the lemma now follows from Theorem 3.17 (iii) in \cite{B04}.
\end{proof}

\subsection{Kähler--Einstein cone metrics}
\label{ssec KE cone}

Given a real number $a\in (0,1)$, one says that a Kähler metric $\omega$ on $M\setminus D$ 
has cone singularities with cone angle $2\pi (1-a)$ along $D$ if given any local holomorphic 
system of coordinates $(z_1, \ldots, z_n)$ on an open set $U\subset M$ such that $D\cap U=\{z_1=0\}$, the Kähler metric $\omega|_{U\setminus D}$ is quasi-isometric to the model metric
\[\omega_a:=\frac{idz_1\wedge d\bar z_1}{|z_1|^{2a}}+\sum_{j=2}^n idz_j\wedge d\bar z_j.\]
That is, there exists $C>0$ such that we have 
\[C^{-1} \omega_a \le \omega \le C \omega_a \quad \mbox{on } \,\, U\setminus D.\]

Let us now fix $a\in (0,1)$. Since $K_M+aD$ is ample by Lemma~\ref{ampleness}, it follows from \cite{GP} (see also \cite{Br,JMR}) that there exists a unique Kähler metric $\omega_{{\rm KE}, a}$ on $M\setminus D$ such that 
\begin{enumerate}
    \item[$\bullet$] $\mathrm{Ric} \, \omega_{{\rm KE}, a}= -2(n+1)\, \omega_{{\rm KE}, a}$,
    \item[$\bullet$] $\omega_{{\rm KE}, a}$ has cone singularities with cone angle $2\pi (1-a)$ along $D$.
\end{enumerate}
Moreover, when $a$ is of the form $a=1-\frac 1m$ for some integer $m \ge 2$, then it is well-known that $\omega_{{\rm KE}, a}$ is an orbifold Kähler metric, see e.g. \cite{Faulk}. What we mean is the following. Given a local chart $U\simeq \Delta^n$ as before, consider the branched cover $p:\Delta^n\to U$ given by $p(z_1, \ldots, z_n)=(z_1^m, z_2, \ldots, z_n)$. Then $p^*(\omega_{{\rm KE}, a}|_{U\setminus D})$ extends to a smooth Kähler metric on the whole $\Delta^n$.

\subsection{Cut-off functions}

Let us first work on the ball $B\subset \mathbb C^n$ endowed with its Bergman metric $\omega_B$ and consider the lower dimensional 
ball $B_0:=B\cap \{z_1=0\}$. We can normalize $\omega_B$ such that 
\[\omega_B=-\frac i 4 \partial \bar \partial \log (1-|z|^2)=\frac{1}{4(1-|z|^2)^2}\sum_{1\le j,k\le n}\big((1-|z|^2)\delta_{jk}+\bar z_j z_k\big) idz_j\wedge d\bar z_k.\]
With this normalization, $\omega_B$ has constant holomorphic holomorphic sectional (resp. bisectional) curvature $-4$ (resp. 
$-2$), 
its sectional curvatures lie in $[-4, - 1]$ and we have 
\[\mathrm{Ric} \, \omega_B:=-\frac{i}{2}\partial \bar \partial \log \det \omega_B^n=-2(n+1)\omega_B.\]

Given a point $p\in B$ with coordinates $(z_1, \ldots, z_n)$, we let $d(p,B_0)$ be the geodesic distance to $B_0$ with respect to $g_B$. 

\noindent
{\bf Fact:} For $p=(z_1,w)\in B$, we have 
$\cosh^2 (d(p,B_0))= \frac{1-\vert w\vert^2}{1-\vert w\vert^2-\vert z_1\vert^2}$.

For a justification, observe that the function $p\mapsto d(p, B_0)$ is invariant under $S^1\times \mathrm{PU}(n-1, 1)$.
Hence there is no loss of generality assuming that $p=(x_1,x_2,0, \ldots, 0)$ with $x_i\in \mathbb [0,1)$,
$x_1=\vert z_1\vert$ and $x_2=\vert w\vert$. 
Since $\{z_i=0 \text{ for }i\geq 3\}$ is totally geodesic in $B$, and the same holds true for the 
totally real plane $\{(r,s)\mid r,s\in \mathbb{R}, r^2+s^2<1\}\subset B\cap \mathbb{C}^2$, 
it then suffices to compute the distance between $(x_1,x_2)\in B\cap \mathbb{C}^2$ and 
a point $(0,r)\in B\cap \mathbb{C}^2$ with $r\in [0,1)$. 
The computation on p.15 of \cite{P03} shows that 
\[\cosh^2\Big(d(p,(0,r))\Big)=\frac{(1-rx_2)^2}{(1-x_1^2-x_2^2)(1-r^2)}\]
which is minimized at $r=x_2$. Note that \cite{P03} uses a curvature normalization which is different from ours.
This is the desired formula.

Set $u:=\frac{1-\vert w\vert^2}{1-\vert w\vert^2-\vert z_1\vert^2}=1+\frac{|z_1|^2}{1-|z|^2}$ so that 
$d(\cdot, B_0)= \log(\sqrt{u}+\sqrt{u-1})$ satisfies $\frac 12 \log u \le d(\cdot, B_0) \le \frac 12 \log(4u)$. It is easy to check that $\log u$ is smooth and it has uniformly bounded covariant derivatives at any order. 
Next, let $\xi : \mathbb R\to [0,1]$ be a smooth, non-increasing function such that $\xi \equiv 1$ on $(-\infty,1]$ and $\xi \equiv 0$ on $[\frac 32,+\infty)$.  For any $R>1$, we set 

\begin{equation}\label{chi R}
\widetilde \chi_R:=\xi\Big(\frac{2\log u}{R}\Big)=  \xi\left(\frac{2}{R} \log\left(1+\frac{|z_1|^2}{1-|z|^2}\right)\right).
\end{equation}
The latter function is smooth and satisfies $\widetilde \chi_R \equiv 1$ on 
$\big\{d(\cdot, B_0)\le \frac R4\big\}$ and $ \widetilde \chi_R \equiv 0$ on $\big\{d(\cdot, B_0)\ge \frac R2 \big\}$ as long as $R\ge 6$, thanks to the above fact. Moreover,  by the chain rule we see that for any integer $k\ge 0$, there exists a universal constant $C=C(k)>0$ independent of $R$ satisfying 

\begin{equation}
\label{cut off}
|\nabla^k \widetilde \chi_R |_{\omega_B} \le \frac{C(k)}{R^k}.
\end{equation}
Finally, we see that by construction, the function $\widetilde \chi_R$ on $B$ is invariant under $S^1\times \mathrm{PU}(n-1, 1)$. \\

Let us now go back to our compact ball quotient $M= \Gamma \backslash B$ with its embedded totally geodesic smooth 
connected hypersurface $D\subset M$. From now on, we fix a number $d \ge 2$. By Theorem~\ref{mainapp} (see also Remark~\ref{reformulation2}), there exists a tower of finite covers
\[
\begin{tikzcd}
&  M_{d, R} \arrow[d, "p_{d,R}"]\\
M & M_R \arrow[l, "\Pi_R"]
\end{tikzcd}
\] 
where $\Pi_R$ is étale, $p_{d,R}$ ramifies at order $d$ along a fixed connected component $D_R$ of $\Pi_R^{-1}(D)$ and is étale elsewhere and $D_R\subset M_R$ has normal injectivity radius at least $R$.  In terms of canonical bundles, we have
\[K_{M_{d,R}}=p_{d,R}^*\left(K_{M_R}+\big(1-\frac 1d\big)D_R\right).\]


\begin{remark}
It is important to keep in mind that as $R$ grows, we have no control on the growth of $\deg(p_{d,R})$ so that $p^{-1}_{d,R}(D_R)$ will have a large number of connected components. In what follows, we will perform the analysis directly on $M_R$ and only rely on the existence of $M_{d,R}$ in a qualitative way to desingularize the Kähler--Einstein metric associated to the pair $(M_R, (1-\frac{1}{d})D_R)$. 
\end{remark}

Let us now reformulate the defining property of $M_R$. We fix one connected component $V$ of the 
preimage of the connected smooth divisor 
$D_R\simeq D$ in the universal cover $B$ of $M_R$ and we let $\Gamma_0:=\mathrm{Stab}_\Gamma(V)$ be the stabilizer of $V$ in 
$\Gamma_R$. We have $\Gamma_0\backslash V = D$. Without loss of generality, one can assume that 
$V=B_0=(z_1=0)$. Since the collar size of $D_R$ in $M_R$ is at least $R$, the tubular neighborhood of 
radius $R$ about $D_R$ equals the projection of the tubular neighborhood of radius $R$ about $B_0$ by the action of 
$\Gamma_0$. Thus 
there is a holomorphic, isometric embedding 
\[j_R:\big\{x\in M_R; d(x, D_R) < R\big\} \longrightarrow \Gamma_0 \backslash \big\{p\in B, d(p, B_0) < R\big\}\]
with respect to the complex hyperbolic metric. As explained above, the cut-off function $\widetilde \chi_R$ defined in \eqref{chi R} is invariant under the stabilizer of $B_0$ hence it makes sense to define
\[\chi_R:=j_R^*\widetilde \chi_R.\]

\subsection{The glued metric}
Recall from Section~\ref{sec model KE} that the domain \[\Omega_d:=\{|z_1|^{2d}+\sum_{i=2}^n |z_i|^2<1\}\] has a Kähler--Einstein metric with Ricci constant $-2(n+1)$  which is invariant under $G=\mathrm{Aut}(\Omega_d)$ and therefore descends to a Kähler--Einstein metric on $B$ with {\em cone singularities} of angle $2\pi(1-\frac 1d)$ along $B_0$, invariant under $\Gamma_0$. We denote by $\omega_d$ the induced metric on $\Gamma_0\backslash B$; by abuse of notation we will also denote by $\omega_d$ its pull back to $\big\{x\in M_R; d(x, D_R) < R\big\}$ via $j_R$. 

Let us define $U_R:=\big\{x\in M_R; d(x, D_R) < R\big\} \subset M_R$ on which the function $\chi_R$ is well-defined and compactly supported. We introduce the smooth function $F=\frac{1}{2(n+1)}\log \frac{\omega_B^n}{\omega_d^n}$ on $U_R\setminus D_R$ and set 
\[\omega_R:=\omega_d+dd^c\Big((1-\chi_R) F\Big)=\omega_B-dd^c(\chi_R F).\]
This current is a priori only defined on $U_R$. However, since $\omega_B$ and $\omega_d$ are both Kähler--Einstein metrics 
with the same Einstein constant on $U_R\setminus D_R$, an elementary computation shows that $\omega_R$ coincides with $\omega_B$ on $U_R\setminus U_{\frac R2}\subset \{\chi_R=0\}$ and hence we can extend $\omega_R$ to the whole $M_R$ by setting $\omega_R:=\omega_B$ on $M_R\setminus U_R$.  It is not difficult to show that $\omega_R\in \frac{1}{2(n+1)}c_1(K_{M_R}+(1-\frac 1d) D_R)$. Moreover, $\omega_R$ coincides with $\omega_d$ on $U_{\frac{R}4}$. 
It remains to analyze the behavior of $\omega_R$ on the gluing region $U_{\frac R2}\setminus U_{\frac R4}$. 

In what follows, we will denote by $C(k)$ a constant that depends on a given integer $k\in \mathbb N$ (and implictly on $n$ and $d$) but not on the parameter $R$. The actual value of $C(k)$ may change from line to line but it subject to the constraints recalled above. From Theorem~\ref{thm:comparison}, there exists for any integer $k\ge 0$ a positive number $a=a(d,k)$ such that we have the following decay
\[|\nabla^k(\omega_d-\omega_B)|_{\omega_B} \le C(k)e^{-a R} \quad \mbox{on} \, \, U_{\frac R2}\setminus U_{\frac R4}.\] 
 In particular, the covariant derivatives of $F$ decay in $O(e^{-aR})$. Since the covariant derivatives of $\chi_R$ are bounded (actually they decay polynomially in $R$, cf \eqref{cut off}) it follows that 
\[|\nabla^k (dd^c((1-\chi_R) F)) |_{\omega_B}\le C(k)e^{-a R}\quad \mbox{on} \, \, U_{\frac R2}\setminus U_{\frac R4}.\] Putting everything together, one obtains the following identity
\begin{equation}
\label{glued asymp}
|\nabla^k(\omega_R-\omega_B)|_{\omega_B} \le C(k)e^{-aR} \quad \mbox{on} \, \, M_R\setminus U_{\frac R4}.\
\end{equation}
In particular, it follows from the third item of Theorem~\ref{negativeonreal} that for $R$ large enough, the sectional curvature of 
$\omega_R$ bounded above by a negative independent of $R$. More precisely, \eqref{glued asymp} and \eqref{sup K} imply that for $R$ large enough (depending on $d$), we have
\begin{equation}
\label{sup K R}
    \sup_{x\in M_R\setminus D_R} \sup_{\substack{P\subset T_xM_R \\ \mathrm{plane}}}K_{g_R}(P)= -(n+1)+\frac{n}{f_d(0)^2}
\end{equation}
since the sectional curvatures of $\omega_B$ lie in $[-4,-1]$ and $-(n+1)+\frac{n}{f_d(0)^2}\in (-1, 0)$

Let us now analyze the Ricci potential of $\omega_R$. From the definition of $\omega_R$, it is straightforward to deduce that 
\[\Ric \omega_R+2(n+1)\omega_R=2(n+1) dd^c h_R+\big(1-\frac 1d)[D_R]\]
on $M_R$ where
\[h_R:=-\frac{1}{2(n+1)}\log \frac{\omega_R^n}{\omega_B^n}- \chi_R F\]
 is smooth function on $M_R$ satisfying 
 \[h_R\equiv 0 \quad \mbox{on} \quad (M_R\setminus U_{\frac R2}) \cup U_{\frac R4}\] as well as 
\begin{equation}
\label{hR decay}
|\nabla^kh_R|_{\omega_R} \le C(k)e^{-aR} \quad \mbox{on} \, \,  U_{\frac R2}\setminus U_{\frac R4}.\
\end{equation}
In particular, we have 
\[|\Ric \omr+2(n+1)\omega_R|_{\omega_R}= O(e^{-aR}).\]

\subsection{Curvature of the Kähler--Einstein cone metric on $M_R$}

Thanks to Section~\ref{ssec KE cone}, there exists a unique Kähler--Einstein metric $\womr$ on $M_R$ with cone angle $2\pi(1-\frac 1d)$ along $D_R$ and Einstein constant $-2(n+1)$. Note that since we only picked one component $D_R$ of $\Pi_R^{-1}(D)$, the metric  $\womr$ is \emph{not} the pullback by $\Pi_R$ of the Kähler--Einstein metric for the pair $(M, (1-\frac 1d)D)$. The forms $\omr$ and $\womr$ are orbifold Kähler metrics, that is, they are genuine Kähler metrics on $M_{R}\setminus D_R$, and their pullbacks by $\Phi_d: \Omega_d\to B$ (after first pulling back to the universal cover $\widetilde{U}_R\subset B$) is smooth. Equivalently, both pullbacks 
\begin{equation}
    \label{pull back metrics}
    \widehat \omega_{d,R}:=p_{d,R}^*\womr \quad \mbox{and} \quad \omega_{d,R}:=p_{d,R}^*\omr
\end{equation}
are genuine Kähler metrics on $M_{d,R}$. Since $\omr$ and $\womr$ both belong to the cohomology class
$\frac{1}{2(n+1)}c_1(K_{M_R}+\big(1-\frac 1d\big) D_R)$, one can uniquely write $\womr = \omr+dd^c \phi_R$ where $\phi_R$ solves the Monge-Ampère equation 
\begin{equation}
    \label{MAR}
    (\omr+dd^c \phi_R)^n=e^{2(n+1)(\phi_R+h_R)}\omega_R^n.
\end{equation}

Let us now derive some uniform estimates (as $R$ varies) on $\womr$ and $\phi_R$. First, since the holomorphic bisectional curvature of $\omr$ is bounded above by a negative constant independent of $R$, Theorem 2 of \cite{Y78} shows that 
\[\womr \ge C^{-1} \omr.\]
Here and in what follows, $C$ is a positive constant independent of $R$ which may vary from line to line. Next, since $\omr$ and $\womr$ have Ricci curvature bounded below (say by $-2(n+2)$) we can apply Theorem 3 of \cite{Y78} to conclude that the volume elements of both metrics are uniformly comparable. Given the above estimate, this implies that one has an estimate of the form
\begin{equation}
    \label{C2 est}
    C\omr \ge \womr \ge C^{-1} \omr.
\end{equation}

Consider the orbifold smooth function $\phi_R+h_R$ from the identity (\ref{MAR}). 
At a point $x_R$ where it attains its maximum, its Hessian is nonpositive hence 
$\womr(x_R) \le \omr(x_R)-dd^c h_R(x_R)$. To be precise, one works in the branched cover $M_{d,R}$ where the objects become smooth,
and then one can descend the estimates which \emph{do not depend} on the cover $p_{d,R}$. Since $|dd^c h_R|_{\omr}=O(e^{-aR})$, we infer from the Monge-Ampère equation satisfied by $\phi_R$ that $(\phi_R+h_R)(x_R) \le Ce^{- aR}$ hence the same holds on the whole $M_R$. One can similarly use the minimum principle to see that $\phi_R+h_R\ge -Ce^{-a R}$. By \eqref{hR decay}, we obtain
\begin{equation}
    \label{phiR decay}
    \sup_{M_R} |\phi_R| \le C e^{-a R}.
\end{equation}

The remaining task is to improve this $C^0$ decay to order four decay on $\phi_R$ which will guarantee that the curvature of $\womr$ is close to that of $\omr$, hence it is negative too. For $k\in \mathbb N$, and $f$ a smooth orbifold function on $M_R$, we set $\|f\|_{C^k(M_R)}:=\sup_{M_R} \sum_{j=0}^k|\nabla^k f|_{\omega_R}$. We will show that $\|\phi_R\|_{C^5(M_R)}$ gets arbitrarily small if $R$ is chosen large enough. It is convenient to assume that $R$ is integer valued. Let $x_R\in M_R$ be such that $\|\phi_R\|_{C^5(M_R)}= \sum_{j=0}^5|\nabla^5 \phi_R(x_R)|_{\omega_R}$. Up to extracting subsequences, we only have to consider the following two possibilities. 

\emph{Case 1.} $\limsup_{R\to +\infty} d(x_R, D_R)<+\infty$.

Let us choose a constant $L>0$ such that $d_{\omega_R}(x_R, D_R)\le L$. Using $j_R$, one can embed $\{d_{\omega_R}(\cdot; D_R)\le 3L\}$ in $\Gamma_0\backslash B$ for $R$ large enough. Let $\sigma_R$ be the composition $\Omega_d\overset{\Phi_d}{\to} B\to \Gamma_0\backslash B$. It satisfies $\sigma_R^*\omega_R= \omega_d$. Given the structure of the automorphism group $G$ of the pair $(\Omega_d,(z_1=0))$ one can find a point $p_R\in \Omega_d$  such that $d_{\omega_d}(p_R, 0)\le L$ and  $\sigma_R(p_R)=x_R$. 

From now on, we work on $B_{\omega_d}(0, 3L)\subset \Omega_d$ and define $\widetilde \phi_R:=\sigma_R^*\phi_R$. We can pull back the Monge-Ampère equation \eqref{MAR} there. Since we have the Laplacian estimate \eqref{C2 est}, one can appeal to Evans-Krylov theorem and Schauder estimates to get uniform estimates for the $C^6$ norm of $ \widetilde \phi_R$ on $B_{\omega_d}(0, 2L)$ with respect to $\omega_d$. In particular, up to extracting again, we can assume that $\widetilde \phi_R$ converges in $C^5$ on a slightly smaller ball as $R\to +\infty$. By uniqueness of the limit, we see from \eqref{phiR decay} that $\widetilde \phi_R$ converges to zero in $C^5$ on that set. Given the choice of $x_R$ and since $p_R\in \bar B_{\omega_d}(0,L)$ it follows that
\[\|\phi_R\|_{C^5(M_R)}=\sum_{j=0}^5|\nabla^j \widetilde \phi_R(p_R)|_{\omega_d} \underset{R\to +\infty}{\longrightarrow}0.\]

\emph{Case 2.} $\liminf_{R\to +\infty}   d(x_R, D_R)=+\infty$.

For every integer $k\ge 0$, we have 
\[\sup_{B_{\omega_R}(x_R, 1)} |\nabla^k(\omega_R-\omega_B)|_{\omega_B} \underset{R\to+\infty}\longrightarrow 0\]
thanks to \eqref{glued asymp} and the fourth item in Theorem~\ref{negativeonreal}. Now we pull back our objects to the universal cover $\pi_R : B\to M_R$. Let $p_R\in B$ such that $\pi_R(p_R)=x_R$. By transitivity of the automorphism group of $(B, \omega_B)$, we can find $\mu_R\in \mathrm{Aut}(B, \omega_B)$ such that $\mu_R(0)=p_R$. Let us now consider $\sigma_R:=\pi_R\circ \mu_R$ and $\widetilde \phi_R:=\sigma_R^*(\phi_R|_{B(x_R, 1)})$. We have $\sigma_R(0)=x_R$ and $\sup_{B_{\omega_B}(0,1)}|\nabla^k (\sigma_R^*\omega_R-\omega_B)|\to 0$ for any integer $k\ge0$.  Similarly to the previous step, we can pull back the Monge-Ampère equation \eqref{MAR} by $\sigma_R$. Since we have the Laplacian estimate \eqref{C2 est}, one can appeal to Evans-Krylov theorem and Schauder estimates to get uniform $C^6$ estimates for $\widetilde \phi_R$ on $B_{\omega_B}(0, \frac 34)$ with respect to $\omega_B$. Up to extracting again, we can assume that $\widetilde \phi_R$ converges in $C^5$ on $B_{\omega_B}(0, \frac 12)$ as $R\to+ \infty$. By uniqueness of the limit, we see from \eqref{phiR decay} that $\widetilde \phi_R$ converges to zero in $C^5$ on that set. It follows that 
\[\|\phi_R\|_{C^5(M_R)}\le 2\sum_{j=0}^5|\nabla^j \widetilde \phi_R(0)|_{\omega_B} \underset{R\to +\infty}{\longrightarrow}0.\]

In conclusion, we have showed that 
\[\limsup_{R\to +\infty}\|\phi_R\|_{C^5(M_R)}=0,\] 
hence
\begin{equation}
    \label{smooth convergence}
    \lim_{R\to +\infty} \sup_{M_{d,R}}\sum_{j=0}^3|\nabla^j(\widehat \omega_{d,R}-\omega_{d,R})|_{\omega_{p,R}}=0,
\end{equation}
where $\widehat \omega_{d,R}$ and $\omega_{d,R}$ are defined in \eqref{pull back metrics}.

\begin{proof}[Proof of the main theorem]
We can now complete the proof of the theorem announced in the introduction. 

The forms $\widehat \omega_{d,R}$ and $\omega_{d,R}$ are genuine Kähler metrics on $M_{d,R}$ which are asymptotically close in the sense of \eqref{smooth convergence} as $R\to +\infty$. Since the sectional 
curvature of the Kähler metric $\omega_{d,R}$ on $M_{d,R}$ belongs to some interval $[-b^2, -a^2]$ for some numbers $0<a<b$ independent of $R$ by Theorem~\ref{negativeonreal}, it follows that the sectional curvature of the Kähler--Einstein metric $\widehat \omega_{d,R}$ satisfies the same property as long as $R$ is chosen large enough. This proves the theorem.
\end{proof}

\emph{Infinite family of examples.} 
One can say more, as claimed in the lines below the theorem in the introduction. 

Set $k_d:=(n+1)-\frac{n}{f_d(0)^2}$ 
and $\varepsilon_d:=\frac 12(k_d-k_{d+1})$
which is positive and goes to $0$ as $d\to +\infty$. Given \eqref{sup K R}, one can for any fixed $d$ choose $R=R(d, \varepsilon_d)$ large enough so that 
\[\Big|\sup_{M_{d,R}} K_{\widehat g_{d,R}} - k_d\Big| \le \varepsilon_d.\]
It follows that the quantity
\[\sup_{M_{d,R_d}} K_{\widehat g_{d,R_d}}\]
is strictly increasing with $d$. In particular, given two integers $d,d'\ge 2$,  the universal covers of $(M_{d,R_d}, \widehat\omega_{d,R_d})$ and $(M_{d',R_{d'}}, \widehat\omega_{d',R_{d'}})$ are not isometric unless $d=d'$. By uniqueness of the complete Kähler--Einstein metric on $\widetilde M_{d,R_d}$, this implies 
that $\widetilde M_{d,R_d}$ and $\widetilde M_{d',R_{d'}}$  are not biholomorphic when $d\neq d'$.

\emph{Very strong negativity.} 
Let us recall the notion of very strong negativity introduced by Siu \cite{Siu}. 
Let $(M,\omega)$ be a Kähler manifold written locally $\omega= \frac{i}{2}\sum_{i,j}g_{i\bar j}dz_i\wedge d\bar z_j$. The curvature tensor is given by $R_{i\bar j k\bar \ell}=-g_{i\bar j, k\bar \ell}+g^{s\bar t}g_{s\bar j, k}g_{i\bar t, \bar \ell}$. We say that the curvature tensor of $(M,\omega)$ is very strongly negative if 
\[\sum_{i,j,k,\ell} R_{i \bar j k \bar \ell}\xi^{i\bar j} \overline{\xi^{\ell \bar k}}\]
is negative for arbitrary complex numbers $\xi^{i\bar j}$ such that $\xi^{i\bar j}\neq 0$ for at least one pair of indices $(i,j)$. If $M$ is compact, it is equivalent to the existence of $c>0$ such that $\sum_{i,j,k,\ell} R_{i \bar j k \bar \ell}\xi^{i\bar j} \overline{\xi^{\ell \bar k}} \le -c |\xi|_\omega^2$ for any local holomorphic section $\xi$ of $T_M\otimes T_M$.

Because of the twist of indices in the above negativity condition, the curvature tensor of $(M,\omega)$ is negative if and only if the holomorphic cotangent bundle $\Omega_M$ equipped with the hermitian metric induced by $\omega$ is Nakano positive. Using an other terminology, it can be rephrased by saying that the holomorphic tangent bundle $T_M$ is dual Nakano negative with respect to the hermitian metric induced by $\omega$.

Let $\alpha=\frac{i}{2}\sum_{i,j}h_{i\bar j}dz_i\wedge d\bar z_j$ be a real $(1,1)$-form and let $H_{i\bar j k \bar \ell}=-(h_{i\bar j}h_{k\bar \ell}+h_{i\bar \ell}h_{k \bar j})$ be the $(0,4)$ tensor induced by $\alpha$ (or $h$).  If $\alpha$ is positive (resp. semipositive), then $H$ is very strongly negative (resp. strongly seminegative). 
Indeed, one can assume that $h_{i\bar j}= \lambda_i \delta_{i\bar j}$ for some $\lambda_i > 0$ (resp. $\lambda_i \ge 0)$ and then $-\sum_{i,j,k,\ell}H_{i\bar j k \bar \ell}\xi^{i\bar j} \overline{\xi^{\ell \bar k}}= |\sum_i \lambda_i \xi^{i\bar i}|^2+\sum_{i,j} \lambda_i\lambda_j |\xi^{i\bar j}|^2.$
This applies to the curvature tensor of the ball endowed with the Bergman metric and shows that the latter has very strongly negative curvature tensor. Similarly, if $f$ is a real function, then the tensor $-f_if_{\bar j}f_k f_{\bar \ell}$ is very strongly seminegative since $f_if_{\bar j}f_k f_{\bar \ell}\xi^{i\bar j} \overline{\xi^{\ell \bar k}}= |\sum_{i,j} f_if_{\bar j}\xi^{i\bar j}|^2$.

This discussion applies to the curvature of the Kähler-Einstein metric $\omega_\alpha$ on $\Omega_\alpha$ as it was showed in Theorem 2 of \cite{Bl86} that its curvature tensor $R_{i\bar j k \bar \ell}$ can be decomposed as a sum of terms
\[R_{i\bar j k\bar \ell}=-A (g_{i\bar j}g_{k\bar \ell}+g_{i\bar \ell}g_{k \bar j})-B(\psi_{i\bar j}\psi_{k\bar \ell}+\psi_{i\bar \ell}\psi_{k\bar j})-C\tau_i\tau_{\bar j}\tau_{k}\tau_{\bar \ell}\]
where $A,B,C$ are semipositive functions such that $A\ge \frac{2}{n\alpha+1}$, $\psi:=\log |z_1|^2-\frac{1}{\alpha}\log(1-|z'|^2)$ is plurisubharmonic and $\tau=e^\psi$.

Since $\omega_B$ and $\omega_d$ have very strongly negative curvature tensor, it follows from \eqref{smooth convergence} that the Kähler-Einstein metric $\widehat \omega_{d,R}$ shares the same property as long as $R$ is chosen large enough.

\end{document}